\newtheorem{theorem}{Theorem}[section]
\newtheorem{lemma}[theorem]{Lemma}
\newtheorem{proposition}[theorem]{Proposition}
\newtheorem{corollary}[theorem]{Corollary}
\theoremstyle{definition}
\theoremstyle{remark}
\newtheorem{remark}[theorem]{Remark}
\numberwithin{equation}{section}
\newcommand{\R}{{\mathbb R}}
\newcommand{\Z}{{\mathbb Z}}
\newcommand{\T}{{\mathbb T}}
\renewcommand{\Re}{\operatorname{Re}}
\renewcommand{\Im}{\operatorname{Im}}
\newcommand{\supp}{\operatorname{supp}}
\newcommand{\codim}{\operatorname{codim}}
\newcommand{\vol}{\operatorname{vol}}
\newcommand{\WF}{\operatorname{WF}}
\newcommand{\ord}{\operatorname{ord}}
\newcommand{\rank}{\operatorname{rank}}
\newcommand{\sgn}{\operatorname{sgn}}
\title{A two term Kuznecov sum formula}
\author{Emmett L. Wyman}
\address{Department of Mathematics, University of Rochester, Rochester NY}
\email{emmett.wyman@rochester.edu}
\author{Yakun Xi}
\address{School of Mathematical Sciences, Zhejiang University, Hangzhou 310027, PR China}
\email{yakunxi@zju.edu.cn}
\begin{document}

\maketitle

\begin{abstract}
The Kuznecov sum formula, proved by Zelditch in the Riemannian setting \cite{ZelK}, is an asymptotic sum formula
\[
	N(\lambda) := \sum_{\lambda_j \leq \lambda} \left| \int_H e_j \, dV_H \right|^2 = C_{H,M} \lambda^{\codim H} + O(\lambda^{\codim H - 1})
\]
where $e_j$ constitute a Hilbert basis of Laplace-Beltrami eigenfunctions on a Riemannian manifold $M$ with $\Delta_g e_j = -\lambda_j^2 e_j$, and $H$ is an embedded submanifold. We show for some suitable definition of `$\sim$',
\[
	N(\lambda) \sim C_{H,M} \lambda^{\codim H} + Q(\lambda) \lambda^{\codim H - 1} + o(\lambda^{\codim H - 1})
\]
where $Q$ is a bounded oscillating term and is expressed in terms of the geodesics which depart and arrive $H$ in the normal directions. Our result generalizes a theorem of Safarov on the pointwise Weyl law \cite{Safarov}.

In \cite{CGT, CanGal1}, Canzani, Galkowski, and Toth establish (as a corollary to a stronger result involving defect measures) that if the set of recurrent directions of geodesics normal to $H$ has measure zero, then we obtain improved bounds on the individual terms in the sum---the period integrals. We are able to give a dynamical condition such that $Q$ is uniformly continuous and `$\sim$' can be replaced with `$=$'. This implies improved bounds on period integrals, and this condition is weaker than the recurrent directions having measure zero. Moreover, our result implies improved bounds for period integrals if there is no $L^1$ measure on $SN^*H$ that is invariant under the first return map. This generalizes a theorem of Sogge--Zelditch \cite{SZRev} and Galkowski \cite{galkowski}.
\end{abstract}


\section{Introduction}

Let $(M,g)$ be a compact Riemannian manifold without boundary and $H$ an embedded compact submanifold. Let $e_1,e_2,\ldots$ constitute a Hilbert basis of Laplace-Beltrami eigenfunctions for $L^2(M)$ with
\[
	\Delta_g e_j = -\lambda_j^2 e_j.
\]
A \emph{period integral} is the integral of the restriction of an eigenfunction $e_j$ to $H$, namely
\[
	\int_H e_j \, dV_H,
\]
where here $dV_H$ is the volume element on $H$ induced by the metric $g$. The size of a period integral can be thought of as a measurement of the oscillation of an eigenfunction along $H$.

Period integrals were studied back in the 1920's by Hecke and Petersson as means of understanding Maass forms on compact or finite-area cusped hyperbolic surfaces. Weyl sums of periods are called Kuznecov sum
formulae or local trace formulae which were introduced and studied by Kuznecov \cite{Kuz} and Bruggeman \cite{Brug}. The work of Good \cite{Good} and \cite{Hej} introduced analytical tools into this subject. Period integrals have since received a great deal of attention in the Riemannian setting. The fundamental result in the Riemannian setting is due to Zelditch \cite{ZelK}, who proved, among other things, the asymptotic sum formula
\begin{equation}\label{N def}
	N(\lambda) := \sum_{\lambda_j \leq \lambda} \left| \int_H e_j \, dV_H \right|^2 = C_{H,M} \lambda^{\codim H} + O(\lambda^{\codim H - 1}),
\end{equation}
where 
\[
	C_{H,M} =(2\pi)^{-\codim H} \vol(H) \vol(B^{\codim H}),
\]
where $B^{\codim H}$ is the unit ball in $\R^{\codim H}$.
This implies both general and generic bounds on the period integrals, namely that
\begin{equation}\label{standard period bounds}
	\left| \int_H e_j \, dV_H \right| = O(\lambda_j^\frac{\codim H - 1}{2}),
\end{equation}
and that for every $\epsilon > 0$, there exists a density-$1$ subsequence of eigenfunction basis elements $e_{j_k}$, $k = 1,2,\ldots$ satisfying
\[
	\left| \int_H e_{j_k} \, dV_H \right| = O(\lambda_{j_k}^{-\frac{\dim H}{2} + \epsilon}).
\]
It is conjectured that these generic bounds are in fact general bounds if $M$ is a compact hyperbolic surface and $H$ is a closed geodesic or circle \cite{Rez}, but this seems a long way off.

The remainder of \eqref{N def} and the period integral bounds \eqref{standard period bounds} are sharp in the sense that they cannot be improved when $M$ is the standard sphere (see e.g. \cite{emmett3}). However, there has been an explosion of interest in improving \eqref{standard period bounds} under a variety of geometric and dynamical assumptions on pair $(M,H)$ \cite{CSPer,Gauss,emmett1,emmett2,emmett3,emmett4,CGT,CanGal1,CanGal2,CanGal3,CanGal4}. The first logarithmic improvement over \eqref{standard period bounds} was obtained in \cite{Gauss} where $H$ is a geodesic on a Riemannian surface $M$ . The article \cite{CanGal2} contains state-of-the-art logarithmic improvements over \eqref{standard period bounds} which hold for the broadest assumptions to date. In \cite{CanGal4}, among other things, the authors obtained improved bounds for the discrepancy $N(\lambda)-\rho_{t_0}*N(\lambda)$ between the counting function and its convolution with a suitable Schwartz-class function under suitable dynamical assumptions. While an estimate of this type is essential to obtain improvements to the remainder in \eqref{N def}, it is not altogether sufficient.

While the bounds \eqref{standard period bounds} on individual period integrals of eigenfunctions and quasimodes have seen many improvements, we have only partial results for the remainder term of the corresponding asymptotic formula. This paper shows that an improvement to the remainder term reveals an oscillating term of order $\lambda^{\codim H - 1}$, which we compute in terms of the geometry of geodesics which depart and arrive again at $H$ in normal directions. In the process, we identify weakened sufficient conditions for little-$o$ improvements to the remainder term and to bounds on period integrals of quasimodes.

\subsection{Statement of Results}

We first introduce some convenient notation.

\bigskip

\noindent
\textbf{Notation.} In what follows, we will let $n = \dim M$ and $d = \dim H$. By $f(\lambda) \lesssim g(\lambda)$, we mean there exists a constant $C$ depending only on $M$ and $H$ for which $f(\lambda) \leq C g(\lambda)$ for all $\lambda$. Given two tempered functions $f$ and $g$ where $f$ is monotone-increasing, we write
\[
f(\lambda) \sim g(\lambda) + o(\lambda^{\alpha})
\]
to mean there exists a monotone-decreasing function $\epsilon(\lambda) \to 0$ as $\lambda \to \infty$ such that
\[
g(\lambda - \epsilon(\lambda)) - o(\lambda^\alpha) \leq f(\lambda) \leq g(\lambda + \epsilon(\lambda)) + o(\lambda^\alpha),
\]
where the $o(\lambda^\alpha)$ terms depend on $\epsilon$. If $g$ is uniformly continuous, then the above implies $f(\lambda) = g(\lambda) + o(\lambda^\alpha)$. (See e.g. \cite{Saf}.)

 Our first theorem characterizes the term $Q(\lambda)$ in terms of dynamical properties of the geodesic flow. 
Recall, the principal symbol of the half-Laplacian $\sqrt{-\Delta_g}$ is given by
\[
	p(x,\xi) = |\xi|_{g(x)} = \left(\sum_{i,j} g^{ij}(x) \xi_i \xi_j\right)^{1/2},
\]
where $(x,\xi)$ are canonical local coordinates of $T^*M$. The time-$t$ flow of its Hamilton vector field
\[
	H_p = \sum_{j} \left( \frac{\partial p}{\partial \xi_j} \frac{\partial}{\partial x_j} - \frac{\partial p}{\partial x_j} \frac{\partial}{\partial \xi_j} \right)
\]
is the homogeneous (or analyst's) geodesic flow $G^t : \dot T^*M \to \dot T^*M$. The dynamical features of the flow determine improvements for various spectral asymptotic quantities. The relevant geodesics for \eqref{N def} are those which start and end conormally to $H$. In truth, we will only need to consider the set of such normal geodesics which are nearly stable under perturbation. Precisely, we suppose $(x,\xi) \in \dot N^*H$ and let $t \neq 0$ and suppose $G^t(x,\xi) \in \dot N^*H$. Then, we require that
\begin{equation}\label{structured looping direction}
	dG^t : T_{(x,\xi)} \dot N^*H \to T_{G^t(x,\xi)} \dot N^*H \quad \text{ is a linear isomorphism}.
\end{equation}
We then write
\begin{equation} \label{structured looping set}
	\Sigma_t = \{(x,\xi) \in \dot N^*H : G^t(x,\xi) \in \dot N^*H \text{ and satisfies \eqref{structured looping direction}}\}.
\end{equation}
There is a natural volume density on $\dot N^*H$. To describe it, we select local coordinates $x = (x',x'') \in \R^{d} \times \R^{n-d}$ for which $x'' = 0$ defines $H$, and let $g_H(x')$ denote the local Riemannian metric tensor on $H$. By considering canonical local coordinates $(x,\xi)$ of $T^*M$ and writing $\xi = (\xi',\xi'')$ similarly, we obtain a parametrization of $\dot N^*H$ by $(x',\xi'')$ and we write the natural density as
\[
	\frac{|g_H(x')|}{|g(x)|^\frac12} |dx' \, d\xi''| \qquad x = (x',0).
\]
We let $J_t(x,\xi)$ denote the (positive) determinant of the map $dG^t$ in \eqref{structured looping direction} with respect to these volume elements. That is, if $G^t(x,\xi) = (y,\eta)$ satisfied \eqref{structured looping direction} and $(y,\eta)$ is expressed in similar local coordinates as $(x,\xi)$, then $J_t(x,\xi)$ is locally defined by the identification of the densities
\begin{equation}\label{def J}
	\frac{|g_H(y')|}{|g(y)|^\frac12} |dy' \, d\eta''| \simeq J_t(x,\xi) \frac{|g_H(x')|}{|g(x)|^\frac12} |dx' \, d\xi''|.
\end{equation}
Next, we note $SN^*H = \dot N^*H \cap p^{-1}(1)$, and hence comes equipped with the Leray volume element induced by the condition $p(x,\xi) = 1$. The volume element on $SN^*H$ yields a measure on
\[
	S \Sigma_t := \Sigma_t \cap SN^*H = \{(x,\xi) \in \Sigma_t : p(x,\xi) = 1\}.
\]
It follows that there exists a countable set of non-zero times $\mathcal T\subset\mathbb R\setminus\{0\}$ such that $S\Sigma_t$ has positive measure if and only if $t\in\mathcal T\cup\{0\}.$
\begin{remark}[Convenient local coordinates for the objects above] First, select local coordinates $x' \in \R^d$ of $H$. Then, extend these to coordinates $x = (x',x'')$ of $M$ such that the $x''$ coordinate vectors form an orthonormal frame of vectors, each normal to $H$ at $x'' = 0$. The metric tensor $g$ is then locally written
\[
	g(x) = \begin{bmatrix}
		g_H(x') & 0 \\
		0 & I
	\end{bmatrix} \qquad x = (x',0).
\]
The volume density on $\dot N^*H$ above then reads as
\[
	|g_H(x')|^{1/2} |dx' \, d\xi''|,
\]
and the Leray volume element on $SN^*H = \dot N^*H \cap p^{-1}(1)$ is given by
\[
	|g_H(x')|^{1/2} |dx' \, d\omega|,
\]
where $\omega \in S^{n-d-1}$ denotes the angular part of $\xi'' \in \R^{n-d}$, i.e. $\xi'' = |\xi''| \omega$. The determinant $J_t(x,\xi)$ can also be calculated point-by-point by selecting geodesic normal coordinates $x = (x',x'')$ and $y = (y',y'')$, writing $dG^t : T_{(x,\xi)} \dot N^*H \to T_{(y,\eta)} \dot N^*H$ in matrix form $A : \R^n \to \R^n$. Then, $J_t = |\det A|$.
\end{remark}

Here and throughout, we will define
\begin{equation}\label{Q def}
	Q(\lambda) = \sum_{t\in\mathcal T} e^{-it\lambda} \frac{q(t)}{-it}, \qquad q(t) = (2\pi)^{-n + d} \int_{S \Sigma_t} i^{\sigma_t} \sqrt{J_t}.
\end{equation}
Here, since $t\in\mathcal T,$ the sum is taken over nonzero $t$ for which $q(t) \neq 0$, which is only ever countably many. The integral is taken with respect to the natural measure on $SN^*H$ discussed above, and $\sigma_t$ is a locally constant Maslov factor. It is clear from the definition that $Q$ is a distribution on $\mathbb R$. Nonetheless, we will show that $Q$ is a bounded function with nice properties by exploiting the fact that $q(t)$ is a dynamical object. See Proposition \ref{Q nice}.
	
	Our first main result is the following.

\begin{theorem}\label{Q asymptotics} For $N$ and $Q$ as above, there exists a constant $C$ for which
	\[
	N(\lambda) \sim C_{H,M} \lambda^{n-d} + Q(\lambda) \lambda^{n-d-1} + o(\lambda^{n-d-1}) + C.
	\]
\end{theorem}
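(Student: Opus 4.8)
The plan is to proceed via a Tauberian argument applied to a smoothed spectral sum. First I would introduce the smoothed counting function: fix $\rho \in \mathcal{S}(\R)$ with $\hat\rho$ supported near $0$, $\hat\rho(0) = 1$, and form $\rho_\epsilon * dN(\lambda)$ where $dN$ is the spectral measure $\sum_j |\int_H e_j\,dV_H|^2 \delta_{\lambda_j}$. The spectral-theoretic input is a description of the Schwartz kernel of $\int_H \int_H (\text{half-wave propagator})\, dV_H dV_H$; singularities in the wave trace $\sum_j |\int_H e_j\,dV_H|^2 e^{-it\lambda_j}$ occur exactly at $t \in \mathcal{T} \cup \{0\}$, corresponding to the geodesics departing and arriving normally to $H$. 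Near $t = 0$ a stationary phase computation in the normal variables yields the main term derivative $(n-d)C_{H,M}\lambda^{n-d-1}$ plus lower-order terms, while near each $t_0 \in \mathcal{T}$ the clean-intersection hypothesis \eqref{structured looping direction} (which guarantees $\Sigma_{t_0}$ is a submanifold and $dG^{t_0}$ is nondegenerate along it) lets me apply stationary phase again; the amplitude assembles into exactly $q(t_0)$ with its $\sqrt{J_{t_0}}$ Jacobian factor and Maslov index $\sigma_{t_0}$. Summing the contributions over $t_0$ gives, after an inverse Fourier transform against $\rho_\epsilon$,
\[
	\frac{d}{d\lambda}\bigl(\rho_\epsilon * N\bigr)(\lambda) = (n-d)C_{H,M}\lambda^{n-d-1} + \frac{d}{d\lambda}\bigl(Q(\lambda)\lambda^{n-d-1}\bigr) + \text{(error)},
\]
where the error is $o(\lambda^{n-d-1})$ uniformly once $\epsilon$ is fixed, using that only finitely many $t_0 \in \mathcal{T}$ lie in the support of $\hat\rho_\epsilon$ and the remaining tail is controlled by Proposition \ref{Q nice} (boundedness of $Q$ coming from the dynamical identity $\sum |q(t)/t| < \infty$, or more precisely the monotonicity it encodes).

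Next I would integrate: this shows $\rho_\epsilon * N(\lambda) = C_{H,M}\lambda^{n-d} + Q(\lambda)\lambda^{n-d-1} + C + o(\lambda^{n-d-1})$ up to the smoothing. The passage from the smoothed sum back to $N(\lambda)$ itself is the Tauberian step, and here the monotonicity statement in Proposition \ref{Q nice} is essential: because $(n-d)C_{H,M}\lambda + Q(\lambda)$ is monotone increasing, the function $g(\lambda) = C_{H,M}\lambda^{n-d} + Q(\lambda)\lambda^{n-d-1}$ differs from a monotone function by a lower-order controlled amount, so the standard one-sided Tauberian comparison (as in Safarov, cited in the excerpt) applies. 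Choosing $\epsilon = \epsilon(\lambda) \to 0$ slowly and tracking how the smoothing width translates into a horizontal shift gives precisely the relation
\[
	g(\lambda - \epsilon(\lambda)) - o(\lambda^{n-d-1}) \leq N(\lambda) \leq g(\lambda + \epsilon(\lambda)) + o(\lambda^{n-d-1}),
\]
which is the definition of $N(\lambda) \sim C_{H,M}\lambda^{n-d} + Q(\lambda)\lambda^{n-d-1} + o(\lambda^{n-d-1}) + C$.

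The main obstacle I anticipate is the analysis near the nonzero times $t_0 \in \mathcal{T}$: unlike the situation at $t = 0$, the set $\Sigma_{t_0}$ need not be all of $\dot N^*H$ and need not have full dimension, so one must localize microlocally near $S\Sigma_{t_0}$, verify that the relevant phase function (built from the generating function of the canonical relation of the wave group composed with the two restriction maps to $H$) has clean critical manifold equal to $S\Sigma_{t_0}$, and correctly identify the Hessian determinant of the phase with $J_{t_0}$ together with the half-density factors $|g_H|/|g|^{1/2}$ appearing in \eqref{def J}. Getting the Maslov factor $i^{\sigma_{t_0}}$ right — it is the contribution of the signature of that Hessian — requires care. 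A secondary difficulty is making the error term genuinely uniform in $\lambda$ after fixing $\epsilon$: one needs that the contributions of times $|t_0| > T$ are summable and small, which again is where the dynamical content of Proposition \ref{Q nice} (rather than a naive count of looping directions) does the real work. I would also need to check that the constant $C$ absorbs the finitely many bounded, non-oscillatory pieces left over from lower-order stationary-phase terms and from the regularization at $t = 0$.
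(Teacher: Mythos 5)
Your overall architecture — smooth the spectral measure, analyze the wave trace singularities at $t=0$ and at $t_0\in\mathcal T$, then run a Tauberian argument using the monotonicity from Proposition~\ref{Q nice} — is exactly the paper's, and your identification of the Tauberian step (Safarov-style $g(\lambda-\epsilon(\lambda))\le N\le g(\lambda+\epsilon(\lambda))$) is right. But there are two places where your sketch calls on hypotheses the paper does not have, and that is where the real work is.

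First, the singularities away from $t=0$. You write that ``the clean-intersection hypothesis \eqref{structured looping direction}\dots guarantees $\Sigma_{t_0}$ is a submanifold,'' and you plan to ``verify that the relevant phase function\dots has clean critical manifold equal to $S\Sigma_{t_0}$'' and then apply stationary phase. This is not available: condition \eqref{structured looping direction} is pointwise and does not make $\Sigma_{t_0}$ a submanifold, nor does the theorem assume the canonical relations compose cleanly. The paper is explicit that the Duistermaat--Guillemin clean-composition calculus is \emph{not} usable here, and instead proves Lemma~\ref{very stationary phase lemma}: an oscillatory-integral estimate valid for phase functions with ``flat parts,'' where the stationary set is only assumed measurable. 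The contribution at each $t_0$ is identified as an integral over $C_{t_0}$, which coincides with $S\Sigma_{t_0}$ up to measure zero, and convergence of $\sum_t e^{it\lambda}\int_{C_t}a_0$ is obtained from a bounded-variation/Riemann--Lebesgue argument, not from counting finitely many clean strata. So the step you anticipate as an ``obstacle'' is not a verification — it is a place where your method, as stated, would fail, and a genuinely different lemma is required. There is also a companion issue you do not mention: without cleanness one must separately kill the part of the trace microlocalized away from points satisfying \eqref{maximal intersection}. The paper does this (Lemma~\ref{B lem}) by a rank estimate on the Hessian of the phase, showing that ``partial transversality'' still buys a power-saving $O(\lambda^{n-d-3/2})$.

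Second, the singularity at $t=0$. To place $Q(\lambda)\lambda^{n-d-1}$ correctly as the \emph{second} term, the error in the $t=0$ contribution must be $o(\lambda^{n-d-1})$ — in fact the paper proves $O(\lambda^{n-d-3})$ in Proposition~\ref{big singularity prop}. A straightforward stationary-phase expansion of the Hadamard parametrix would produce an a priori nonzero subprincipal term of order $\lambda^{n-d-2}$ that would pollute the asymptotics. The paper's Lemma~\ref{subprincipal lemma} is designed precisely to show this subprincipal term is purely imaginary and hence drops out of the (real, nonnegative) trace $N'*\rho$. Your sketch does not address this, and without it the two-term statement does not follow. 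A minor additional point: your parenthetical that boundedness of $Q$ ``comes from $\sum|q(t)/t|<\infty$'' is not how the paper argues (and that series need not converge absolutely); boundedness is extracted from the positivity of $(n-d)C_{H,M}+Q'$ via a one-term Tauberian theorem, and the positivity comes from the operator-theoretic Lemma~\ref{pos definite}, not a summability estimate.
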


The inclusion of the constant $C$ is to address a quirk of the codimension-$1$ case, where the remainder reads $o(1)$. This is because the asymptotics of $N$ will be obtained by integrating (a smoothing of) the measure $N'$ from $0$ to $\lambda$. In this case, even usually negligible rapidly-decaying errors might contribute a constant to the asymptotics. Note, this constant disappears into the remainder if $\codim H \geq 2$.

Theorem \ref{Q asymptotics} and its proof are inspired by the work of Safarov \cite{Safarov} on the study of similar $Q$ terms for the point-wise Weyl law. In particular, it should be compared with Theorem 1.8.14 in \cite{Saf}. Nonetheless, we want to point out that our formulation of $Q$ is in a different form than that in \cite{Saf}, in the sense that we sort the contributions to the second term by time $t$, instead of using the first-return map. As commented by the authors in \cite{Saf}, it seems very difficult to give sufficient conditions under which $Q$ is uniformly continuous. However, following the method in \cite{SZRev}, we are able to give a sufficient condition using our formulation of $Q$. The next theorem provides such a condition. In this case, the notation `$\sim$' becomes equality, and $N$ does not have serious jumps.

\begin{theorem}\label{Q continuous} If $Q$ is uniformly continuous, then
	\[
	N(\lambda) = C_{H,M} \lambda^{n-d} + Q(\lambda) \lambda^{n-d-1} + o(\lambda^{n-d-1}) + C.
	\]
Furthermore, $Q$ is uniformly continuous if 
\begin{equation}\label{averaging condition}
	\lim_{T \to \infty} \frac{1}{T} \sum_{t\in\mathcal T\cap[-T,T]} |q(t)| = 0.
\end{equation}
\end{theorem}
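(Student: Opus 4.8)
The plan is to prove Theorem~\ref{Q continuous} in two stages: first that uniform continuity of $Q$ upgrades the relation `$\sim$' in Theorem~\ref{Q asymptotics} to genuine equality, and second that the averaging condition~\eqref{averaging condition} forces $Q$ to be uniformly continuous. The first stage is essentially bookkeeping built into the definition of `$\sim$' given in the Notation paragraph. Recall that $N(\lambda) \sim g(\lambda) + o(\lambda^\alpha)$ with $g(\lambda) = C_{H,M}\lambda^{n-d} + Q(\lambda)\lambda^{n-d-1} + C$ means there is a monotone $\epsilon(\lambda)\to 0$ with $g(\lambda-\epsilon(\lambda)) - o(\lambda^\alpha) \le N(\lambda) \le g(\lambda+\epsilon(\lambda)) + o(\lambda^\alpha)$. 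If $g$ is uniformly continuous, then $g(\lambda\pm\epsilon(\lambda)) = g(\lambda) + o(1)$, and since the leading polynomial power is $\alpha+1$... one must be slightly careful: $C_{H,M}(\lambda\pm\epsilon(\lambda))^{n-d} = C_{H,M}\lambda^{n-d} \pm (n-d)C_{H,M}\epsilon(\lambda)\lambda^{n-d-1} + O(\epsilon(\lambda)^2 \lambda^{n-d-2})$, so the first-order correction is $O(\epsilon(\lambda)\lambda^{n-d-1}) = o(\lambda^{n-d-1})$ because $\epsilon(\lambda)\to 0$; similarly $Q(\lambda\pm\epsilon(\lambda))(\lambda\pm\epsilon(\lambda))^{n-d-1} = Q(\lambda)\lambda^{n-d-1} + o(\lambda^{n-d-1})$ using boundedness of $Q$ (Proposition~\ref{Q nice}) together with its uniform continuity. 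Thus both the upper and lower bounds collapse to $g(\lambda) + o(\lambda^{n-d-1})$, giving the claimed equality. So I would just need to spell out that $g$ is uniformly continuous precisely when $Q$ is, since the polynomial and constant pieces are locally Lipschitz with slowly-growing constants that are absorbed into the remainder.

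The second and more substantial stage is to show that \eqref{averaging condition} implies $Q$ is uniformly continuous. Here I would work directly with the definition $Q(\lambda) = \sum_{t\in\mathcal T} e^{-it\lambda}\frac{q(t)}{-it}$. The natural idea is to split the sum into a finite part over $|t| \le T$ and a tail over $|t| > T$. The finite part $Q_T(\lambda) := \sum_{t\in\mathcal T,\,|t|\le T} e^{-it\lambda}\frac{q(t)}{-it}$ is a finite trigonometric sum, hence Lipschitz with constant $\sum_{|t|\le T} |q(t)|$, so it is certainly uniformly continuous for each fixed $T$. The issue is to show the tail $Q(\lambda) - Q_T(\lambda)$ is uniformly small as $T\to\infty$, uniformly in $\lambda$ --- this would exhibit $Q$ as a uniform limit of uniformly continuous functions, hence uniformly continuous. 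The tail is bounded by $\sum_{t\in\mathcal T,\,|t|>T} \frac{|q(t)|}{|t|}$, and to control this I would use Abel summation (summation by parts) against the partial sums $S(R) := \sum_{t\in\mathcal T\cap[-R,R]} |q(t)|$. Condition~\eqref{averaging condition} says $S(R) = o(R)$, and then $\sum_{|t|>T} \frac{|q(t)|}{|t|} = \int_T^\infty \frac{dS(R)}{R} = \big[\tfrac{S(R)}{R}\big]_T^\infty + \int_T^\infty \frac{S(R)}{R^2}\,dR$, and $\frac{S(R)}{R}\to 0$ while $\int_T^\infty \frac{S(R)}{R^2}\,dR \to 0$ as $T\to\infty$ by the $o(R)$ bound (dominated convergence, or a direct $\epsilon$-argument splitting the integral). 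Hence the tail $\to 0$ uniformly in $\lambda$, completing the argument.

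I expect the main obstacle --- or at least the point needing the most care --- to be the Abel-summation estimate for the tail, specifically justifying that $S(R) = o(R)$ (rather than merely $S(R) = O(R)$, which would only give boundedness of $Q$, consistent with Proposition~\ref{Q nice}) genuinely yields $\sum_{|t|>T}\frac{|q(t)|}{|t|} \to 0$. The subtlety is that $o(R)$ Cesàro-type control does not immediately give summability of $|q(t)|/|t|$ over all $t$; it only gives that the tails are small. One has to be a little delicate: fix $\eta > 0$, choose $R_0$ so that $S(R) \le \eta R$ for $R \ge R_0$, then for $T \ge R_0$ bound $\sum_{|t|>T} \frac{|q(t)|}{|t|}$ by the Stieltjes integral and estimate $\big[\tfrac{S(R)}{R}\big]_T^\infty \le \eta$ in the limit and $\int_T^\infty \tfrac{S(R)}{R^2}\,dR \le \eta \int_T^\infty \tfrac{dR}{R}$ --- but this last integral diverges, so a plain bound is not enough and one must instead integrate by parts the other way or use that $S$ is actually a step function with finitely many jumps below any threshold. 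A clean fix is: $\sum_{|t|>T}\frac{|q(t)|}{|t|} = \sum_{k\ge 0} \sum_{2^k T < |t| \le 2^{k+1}T} \frac{|q(t)|}{|t|} \le \sum_{k\ge 0} \frac{1}{2^k T}\big(S(2^{k+1}T) - S(2^k T)\big) \le \sum_{k\ge 0} \frac{S(2^{k+1}T)}{2^k T} \le \sum_{k\ge 0} \frac{\eta \cdot 2^{k+1}T}{2^k T} = 2\eta \sum_{k\ge0} 1$, which still diverges --- so the dyadic bound must be organized more carefully, e.g. telescoping $\sum_k 2^{-k}(S(2^{k+1}T)/(2^{k+1}T))$ and using that $S(2^{k+1}T)/(2^{k+1}T) \le \eta$ to get $\le 2\eta\sum_k 2^{-k} = 4\eta$. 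That works, and it is the kind of estimate I would want to get exactly right; everything else (the finite part being Lipschitz, the uniform-limit-of-uniformly-continuous argument, and the first stage relating uniform continuity of $g$ to that of $Q$) is routine.
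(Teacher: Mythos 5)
Your first stage is correct and matches what the paper does; the paper dismisses the implication (uniform continuity of $Q$ upgrades `$\sim$' to `$=$') as ``straightforward,'' and your verification that $g(\lambda\pm\epsilon(\lambda)) - g(\lambda) = o(\lambda^{n-d-1})$ for $g(\lambda)=C_{H,M}\lambda^{n-d}+Q(\lambda)\lambda^{n-d-1}+C$ is exactly the needed bookkeeping.

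The gap is in your second stage, and it is a real one. The averaging condition $S(R) := \sum_{t\in\mathcal T\cap[-R,R]}|q(t)| = o(R)$ does \emph{not} imply that $\sum_{|t|>T}|q(t)|/|t| \to 0$, nor even that this tail is finite. If $|q(t)|\asymp 1/\log|t|$ on integers $|t|\ge 2$, then $S(R)\asymp R/\log R=o(R)$, yet $\sum_t|q(t)|/|t|\asymp\sum_{n\ge2}1/(n\log n)$ diverges. Your dyadic computation correctly exposes the divergence, but the proposed ``telescoping fix'' contains an arithmetic slip: since $S(2^{k+1}T)/(2^kT)=2\cdot S(2^{k+1}T)/(2^{k+1}T)$, the geometric weight $2^{-k}$ you introduce does not appear; rewriting only buys a constant factor $2$, not decaying weights, and the sum over $k$ still diverges. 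More fundamentally, decomposing $Q$ into a Lipschitz head plus an absolutely small tail is the wrong strategy, because under the hypothesis the trigonometric series defining $Q$ need not be absolutely convergent at all.

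The paper's proof (Proposition~\ref{Q continuous prop}) runs on an entirely different mechanism: the monotonicity established in Proposition~\ref{Q nice}. Writing $F(\lambda)=c\lambda+Q(\lambda)$ with $F$ monotone increasing, and picking an even nonnegative Schwartz $\rho$ with $\widehat\rho\ge0$, $\supp\widehat\rho\subset[-1,1]$, and $\rho\ge1$ on $[-1,1]$, one uses positivity of the measure $F'$ to bound
\[
F(\lambda+\epsilon)-F(\lambda-\epsilon)\le\int F'(\lambda+\tau)\,\rho(\tau/\epsilon)\,d\tau=\epsilon\,F'*\rho_\epsilon(\lambda),\qquad\rho_\epsilon(\tau)=\epsilon^{-1}\rho(\tau/\epsilon).
\]
Fourier inversion then gives
\[
\epsilon\,F'*\rho_\epsilon(\lambda)=c\epsilon\int\rho+\epsilon\sum_{t\in\mathcal T}e^{-it\lambda}q(t)\,\widehat\rho(\epsilon t)\lesssim\epsilon\Bigl(1+\sum_{0<|t|\le\epsilon^{-1}}|q(t)|\Bigr),
\]
which vanishes as $\epsilon\to0$ uniformly in $\lambda$ precisely because of \eqref{averaging condition}. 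The key idea you are missing is that monotonicity lets one dominate the increment of $F$ by a smoothed derivative; the compact Fourier support of $\rho$ then automatically truncates the $t$-sum at scale $\epsilon^{-1}$, so the only control on $q$ that ever enters is an average over a bounded window---exactly what the hypothesis supplies. No summability of $q(t)/t$ is used or needed.
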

Since $q(t)$ is a dynamical object, the condition \eqref{averaging condition} is imposed on the dynamic of the geodesic flow in some averaged sense. 
As a corollary, we find that \eqref{averaging condition} tells us when we may have an improvement to period integrals for quasimodes with shrinking spectral support. This is discussed in greater detail below in Corollary \ref{quasimode corollary}. This condition can also be related to the set of recurrent directions. See Proposition \ref{recurrent}.

Our last main result improves the asymptotic sum formula \eqref{N def} provided the set of conormal looping directions has measure zero. Compare this, for example, to the Duistermaat-Guillemin theorem \cite{DG, Ivrii}, which improves the remainder term of the Weyl law provided the set of covectors belonging to closed orbits of the geodesic flow is measure zero. This theorem follows directly from \eqref{Q def} and Theorem \ref{Q continuous}.

\begin{theorem}\label{Q zero} $Q$ is constantly zero if and only if $S \Sigma_t$ is a measure-zero subset of $SN^*H$ for each $t \in\mathcal T$, in which case
\[
	N(\lambda) = C_{H,M} \lambda^{n-d} + o(\lambda^{n-d-1}) + C.
\]
\end{theorem}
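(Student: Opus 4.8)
The plan is to deduce Theorem \ref{Q zero} from Theorem \ref{Q continuous} together with an analysis of when the definition \eqref{Q def} of $Q$ collapses. First I would observe that $Q \equiv 0$ is essentially a statement about the Fourier coefficients $q(t)/(-it)$ vanishing for all $t \in \mathcal{T}$; so the two directions of the equivalence reduce to showing (a) if $q(t) = 0$ for every $t \in \mathcal{T}$ then $\mathcal{T}$ is empty and $Q \equiv 0$ trivially, and conversely (b) if $Q \equiv 0$ then each $q(t)$ must vanish. The forward implication should be the easy one: if $S\Sigma_t$ has measure zero for every $t \in \mathcal{T} \cup \{0\}$, then, by the very definition of $\mathcal{T}$ as the set of times where $S\Sigma_t$ has positive measure, $\mathcal{T}$ is empty, the sum in \eqref{Q def} is empty, and $Q \equiv 0$. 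One then invokes Theorem \ref{Q continuous} with the (vacuously satisfied) averaging condition \eqref{averaging condition}, or simply notes that the constant function $0$ is uniformly continuous, to conclude $N(\lambda) = C_{H,M}\lambda^{n-d} + o(\lambda^{n-d-1}) + C$.

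For the converse, I would argue that $Q$, being given by the (generalized, almost-periodic) Fourier series $\sum_{t \in \mathcal{T}} e^{-it\lambda} q(t)/(-it)$, cannot vanish identically unless all of its coefficients vanish. The cleanest way to see this is to recover the coefficients by a mean: for fixed $t_0 \in \mathcal{T}$,
\[
	\lim_{\Lambda \to \infty} \frac{1}{2\Lambda} \int_{-\Lambda}^{\Lambda} Q(\lambda) e^{it_0 \lambda} \, d\lambda = \frac{q(t_0)}{-it_0},
\]
provided this operation is justified — which requires knowing the series converges in a suitable averaged sense and that term-by-term integration is legitimate. Since $Q$ is bounded (Proposition \ref{Q nice}), the left-hand side is identically $0$ when $Q \equiv 0$, forcing $q(t_0) = 0$. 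It then remains to translate $q(t) = 0$ into $S\Sigma_t$ having measure zero. This is where the dynamical content enters: the integrand $i^{\sigma_t}\sqrt{J_t}$ in \eqref{Q def} has $\sqrt{J_t} > 0$ pointwise (it is a positive determinant) and $i^{\sigma_t}$ is a locally constant fourth root of unity, so a priori cancellation between components with different Maslov indices could make $q(t) = 0$ without $S\Sigma_t$ being null. I would expect the resolution to be that the relevant Maslov factor is constant on each $S\Sigma_t$ — or that a positivity/parity argument (of the kind alluded to in the remark that the proof of Proposition \ref{Q nice} "relies heavily on that $q(t)$ is a dynamical object") rules out cancellation — so that $|q(t)| \gtrsim \operatorname{meas}(S\Sigma_t)$ and hence $q(t) = 0 \iff \operatorname{meas}(S\Sigma_t) = 0$.

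The main obstacle, then, is precisely this last step: showing that $q(t) = 0$ forces $S\Sigma_t$ to be measure zero, i.e. that there is no hidden cancellation in the integral $\int_{S\Sigma_t} i^{\sigma_t}\sqrt{J_t}$. I anticipate this is handled by the same mechanism that makes $(n-d)C_{H,M}\lambda + Q(\lambda)$ monotone in Proposition \ref{Q nice}: monotonicity of that expression is equivalent to a positivity statement on the jumps of $N$ at each $t \in \mathcal{T}$, and those jumps are nonnegative because they count (a weighted version of) squared period integrals, which forces $q(t)$ to have a definite sign relationship with the measure of $S\Sigma_t$ rather than allowing arbitrary phase cancellation. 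Once $q(t) = 0 \iff \operatorname{meas}(S\Sigma_t) = 0$ for all $t \in \mathcal{T}$ is established, both implications of the equivalence and the displayed asymptotic follow immediately by feeding $Q \equiv 0$ into Theorem \ref{Q continuous}.
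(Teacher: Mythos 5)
Your proof of the ``if'' direction is correct and matches the paper's (very terse) treatment: the hypothesis that $S\Sigma_t$ is null for every $t\in\mathcal T$ forces $\mathcal T=\emptyset$ by the very definition of $\mathcal T$, so the sum defining $Q$ in \eqref{Q def} is empty, $Q\equiv 0$ is trivially uniformly continuous, and Theorem~\ref{Q continuous} gives the displayed asymptotic. The paper proves no more than this either — it simply asserts that Theorem~\ref{Q zero} ``follows directly from \eqref{Q def} and Theorem~\ref{Q continuous}.''

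For the ``only if'' direction you correctly single out the one nontrivial step: $q(t)=(2\pi)^{-n+d}\int_{S\Sigma_t}i^{\sigma_t}\sqrt{J_t}$ could in principle vanish by cancellation among components of $S\Sigma_t$ carrying different values of the locally constant Maslov index $\sigma_t$, even if $|S\Sigma_t|>0$. But your proposed mechanism does not close that gap. The positivity in Proposition~\ref{Q nice} says that $(2\pi)^{-n+d}|SN^*H|+Q'$ is a nonnegative measure; a Bochner-type argument then bounds the individual coefficients, $|q(t)|\le (2\pi)^{-n+d}|SN^*H|$, but imposes no lower bound: $Q'\equiv 0$ (all $q(t)=0$) is perfectly compatible with that positivity, regardless of whether $\mathcal T$ is nonempty. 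Nor does the nonnegativity of the jumps of $N$ help: those jumps are in the spectral variable $\lambda$ and are automatically nonnegative (each is a finite sum of $|\int_H e_j|^2$), and they give no pointwise sign information on any single $q(t)$. So the step ``$q(t)=0\Rightarrow |S\Sigma_t|=0$'' remains unproved in your write-up. To be fair, the paper itself does not supply an argument for it: the remark after \eqref{Q def} that the sum runs over ``$t$ for which $q(t)\ne 0$'' reads as though $t\in\mathcal T$ were known to imply $q(t)\ne 0$, but no justification is offered, and this is precisely the Maslov-cancellation point you raise. You should either prove that $\sigma_t$ is constant on each $S\Sigma_t$ (or otherwise that cancellation cannot occur), or restrict the claim of the theorem to the ``if'' direction plus the asymptotic, which is all that is needed for the Duistermaat--Guillemin-type application discussed in the introduction.
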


We will prove Theorems \ref{Q asymptotics} and \ref{Q continuous}, from which Theorem \ref{Q zero} immediately follows. The proof of Theorem \ref{Q asymptotics} is presented in Section \ref{TAUBERIAN}, though the bulk of the computations are deferred until Sections \ref{SINGULARITY AT ZERO}, \ref{SINGULARITIES OFF ZERO} and \ref{Q NICE}. The main term of Theorem \ref{Q asymptotics} arises from the big singularity of the Fourier transform of $N'$ at $0$. The principal part of this singularity is easy to compute, but we also need to show the subprincipal part vanishes. This is done in Section \ref{SINGULARITY AT ZERO} with the help of Lemma \ref{subprincipal lemma}, which is a stationary phase result for real oscillatory integrals with real symbols. The oscillating $Q$ term in the asymptotics is determined by the singularities of $N'$ which occur away form zero. By contrast, these singularities have a principal part which is more difficult to compute, but there is no need to compute their subprincipal terms. This is done in Section \ref{SINGULARITIES OFF ZERO}. Normally, we would use Duistermaat and Guillemin's calculus for Fourier integral operators with cleanly composing canonical relations, but we do not assume cleanness in the theorem. Instead, we rely on a delicate argument based on estimates for oscillatory integrals with phase functions with flat parts (see Lemma \ref{very stationary phase lemma}).


\subsection{Implications of the main results} \label{implications}

Theorem \ref{Q continuous} yields improvements to the standard period integral bounds \eqref{standard period bounds}. In fact, this holds for quasimodes of shrinking spectral support.

\begin{corollary}\label{quasimode corollary} Suppose \eqref{averaging condition} is satisfied, and let $\epsilon(\lambda)$ be any monotone-decreasing function which vanishes in the limit $\lambda \to \infty$. If $\psi_\lambda$ is any sum of eigenfunctions with eigenvalues in the range $[\lambda, \lambda + \epsilon(\lambda)]$ with $\|\psi_\lambda\|_{L^2(M)} = 1$, then we have
\[
	\left| \int_H \psi_\lambda \, dV_H \right| = o(\lambda^\frac{n-d-1}{2}).
\]
\end{corollary}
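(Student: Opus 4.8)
The plan is to deduce the bound on the single quasimode $\psi_\lambda$ from the asymptotics of $N(\lambda)$ provided by Theorem \ref{Q continuous}, using the fact that condition \eqref{averaging condition} makes $Q$ uniformly continuous and hence that the remainder in the sum formula is genuinely $o(\lambda^{n-d-1})$. First I would observe that if $\psi_\lambda = \sum_{\lambda_j \in [\lambda, \lambda+\epsilon(\lambda)]} c_j e_j$ with $\sum |c_j|^2 = 1$, then by Cauchy--Schwarz
\[
	\left| \int_H \psi_\lambda \, dV_H \right|^2 = \left| \sum_{\lambda_j \in [\lambda,\lambda+\epsilon(\lambda)]} c_j \int_H e_j \, dV_H \right|^2 \leq \sum_{\lambda_j \in [\lambda, \lambda + \epsilon(\lambda)]} \left| \int_H e_j \, dV_H \right|^2 = N(\lambda + \epsilon(\lambda)) - N(\lambda).
\]
So it suffices to show the spectral cluster $N(\lambda + \epsilon(\lambda)) - N(\lambda) = o(\lambda^{n-d-1})$ whenever $\epsilon(\lambda) \to 0$ monotonically.

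Next I would invoke Theorem \ref{Q continuous}: under \eqref{averaging condition}, $Q$ is uniformly continuous, so
\[
	N(\mu) = C_{H,M}\mu^{n-d} + Q(\mu)\mu^{n-d-1} + o(\mu^{n-d-1}) + C
\]
holds as a genuine equality. Subtracting the expansions at $\mu = \lambda + \epsilon(\lambda)$ and $\mu = \lambda$ gives
\[
	N(\lambda+\epsilon(\lambda)) - N(\lambda) = C_{H,M}\big[(\lambda+\epsilon(\lambda))^{n-d} - \lambda^{n-d}\big] + Q(\lambda+\epsilon(\lambda))(\lambda+\epsilon(\lambda))^{n-d-1} - Q(\lambda)\lambda^{n-d-1} + o(\lambda^{n-d-1}).
\]
The first bracket is $O(\epsilon(\lambda)\lambda^{n-d-1}) = o(\lambda^{n-d-1})$ since $\epsilon(\lambda) \to 0$. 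For the $Q$ terms, write the difference as $Q(\lambda+\epsilon(\lambda))\big[(\lambda+\epsilon(\lambda))^{n-d-1} - \lambda^{n-d-1}\big] + \big[Q(\lambda+\epsilon(\lambda)) - Q(\lambda)\big]\lambda^{n-d-1}$; the first piece is $o(\lambda^{n-d-1})$ because $Q$ is bounded (Proposition \ref{Q nice}) and the bracket is $O(\epsilon(\lambda)\lambda^{n-d-2})$, and the second piece is $o(\lambda^{n-d-1})$ precisely because uniform continuity of $Q$ forces $Q(\lambda+\epsilon(\lambda)) - Q(\lambda) \to 0$ as $\lambda \to \infty$. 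Hence $N(\lambda+\epsilon(\lambda)) - N(\lambda) = o(\lambda^{n-d-1})$, and taking square roots in the Cauchy--Schwarz bound yields the claim.

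The only subtlety — the place I would be most careful — is the interaction with the slightly nonstandard meaning of `$\sim$' in the paper's notation: strictly, Theorem \ref{Q asymptotics} only gives the sandwich $g(\lambda - \tilde\epsilon(\lambda)) - o(\lambda^\alpha) \le N(\lambda) \le g(\lambda + \tilde\epsilon(\lambda)) + o(\lambda^\alpha)$ for some monotone $\tilde\epsilon \to 0$, and it is the uniform continuity of $Q$ (from \eqref{averaging condition}) that collapses this to an equality with a genuine $o(\lambda^{n-d-1})$ remainder. I would make explicit that this is exactly the content of the first sentence of Theorem \ref{Q continuous}, so that the subtraction argument above is legitimate, and note that the additive constant $C$ simply cancels in the difference $N(\lambda+\epsilon(\lambda)) - N(\lambda)$, so the codimension-one quirk causes no trouble here. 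No genuinely hard estimate is needed beyond what Theorem \ref{Q continuous} already supplies; the corollary is essentially a clean extraction of a cluster bound from a two-term Weyl-type asymptotic.
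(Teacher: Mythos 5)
Your proof is correct and follows essentially the same route the paper sketches in the paragraph after Corollary \ref{quasimode corollary}: Cauchy--Schwarz to reduce to a spectral-cluster count, then Theorem \ref{Q continuous} applied to the difference $N(\lambda + \epsilon(\lambda)) - N(\lambda)$, with uniform continuity of $Q$ and boundedness from Proposition \ref{Q nice} absorbing both oscillatory contributions into $o(\lambda^{n-d-1})$. The only thing I would add for completeness is a remark about the closed-vs.-open interval convention: $N(\lambda+\epsilon(\lambda)) - N(\lambda)$ sums over $\lambda_j \in (\lambda, \lambda+\epsilon(\lambda)]$, while the hypothesis allows $\lambda_j = \lambda$; replacing the lower cutoff by $N((\lambda-\epsilon(\lambda))^{-})$ or working with the one-sided limit handles this trivially and does not affect the $o(\lambda^{n-d-1})$ conclusion.
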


This corollary follows by applying the asymptotics of Theorem \ref{Q continuous} to the difference $N(\lambda + \epsilon) - N(\lambda)$, and writing $\psi$ as a linear combination of eigenfunctions and applying Cauchy-Schwarz. In other words, the jumps of $N(\lambda)$ are $o(\lambda^{n-d-1})$ if \eqref{averaging condition} holds.

In many ways, this work is inspired by that of Sogge and Zelditch \cite{SZRev} (see also \cite{SZRev2}) which gives necessary conditions for when the standard pointwise estimates
\[
	|e_j(x)| = O(\lambda^\frac{n-1}{2}) \qquad x \in M
\]
are saturated for a real-analytic manifold $M$. By their previous work with Toth \cite{STZ}, saturation can only occur when the set of recurrent directions at $x$ has positive measure in $S_x^*M$. In the case where $M$ is real-analytic, this may only occur when all geodesics departing $x$ arrive again at $x$ after a common period $T > 0$, such as in the case of an umbilic point on a triaxial ellipsoid.

Though they do not highlight the oscillating quantity $Q(\lambda)$ or the dynamical quantities $q(t)$ with definitions, their argument yields Corollary \ref{quasimode corollary} provided $M$ is real-analytic and $H = \{x\}$ is a singleton set consisting of a self-focal point $x$. In this case, $G^{T_0}(S^*_xM) = S^*_xM$ where $T_0 > 0$ is the least common period, they are able to show the condition \eqref{averaging condition} holds if and only if there does not exist any $L^1$ measure on $S_x^*M$ which is invariant under the first return map $G^{T_0} : S^*_x M \to S^*_x M$. This result is later generalized by Galkowski \cite{galkowski} to the smooth case. Our main theorem implies similar results for a general submanifold $H$.  As in \cite{galkowski}, by making use of the first return map, we do not assume the existence of a least common period.

Let $\mathbf T(x,\xi)$ be the smallest positive time $t$ for which $(x,\xi) \in S\Sigma_t$. We say an $L^1$ measure $f(x,\xi) \, dx \, d\xi$ is invariant under the first return map on $SN^*H$ if
\[
	f = \begin{cases} 
		f \circ G^{\mathbf T} J_{\mathbf T} & \text{if } \mathbf T < \infty \\
		0 & \text{if } \mathbf T = \infty.
	\end{cases}
\]

\begin{theorem}\label{invariant function}
	Suppose the only invariant $L^1$ measure of the first return map on $SN^*H$ is the trivial one. Then, \eqref{averaging condition} holds.
\end{theorem}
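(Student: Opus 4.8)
The plan is to realize the first return map on $SN^*H$ as the generator of a half-density transfer operator $U$ on $L^2(SN^*H)$, to rephrase the hypothesis of the theorem as the statement that $U$ has no nonzero fixed point, and then to invoke the von Neumann mean ergodic theorem to collapse the Cesàro averages in \eqref{averaging condition}. Write $W = SN^*H$, a compact manifold, with its Leray measure $\mu$ of total mass $V := \mu(W) < \infty$. Let $\Phi(x,\xi) = G^{\mathbf{T}(x,\xi)}(x,\xi)$ be the first return map, defined $\mu$-a.e.\ on $\{\mathbf{T} < \infty\}$, and let $t_k = \sum_{j=0}^{k-1}\mathbf{T}\circ\Phi^j$ denote the $k$-th conormal return time. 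The first step is to record the elementary structural facts that, for a.e.\ $(x,\xi)$, the set $\{t>0 : (x,\xi)\in S\Sigma_t\}$ is precisely $\{t_1(x,\xi),t_2(x,\xi),\dots\}$; that $\Phi$ is a.e.\ injective; and that the chain rule for Jacobians yields $J_{t_k} = \prod_{j=0}^{k-1}J_{\mathbf{T}}\circ\Phi^j$ on $\{t_k<\infty\}$. I would also prove the geometric lemma that conormal return times are bounded below, $\mathbf{T} \ge \delta_0 > 0$; this follows from the injectivity radius of $M$ together with the observation that a conormal return near $x$ forces a critical point of $d(x,\cdot)|_H$ near $x$, of which $x$ is the only one, and compactness of $H$ makes $\delta_0$ uniform.

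Next, set $Uf = \sqrt{J_{\mathbf{T}}}\,(f\circ\Phi)$ on $\{\mathbf{T}<\infty\}$ (and $Uf = 0$ where $\mathbf{T} = \infty$), and similarly $U_1 f = J_{\mathbf{T}}\,(f\circ\Phi)$. Since $J_{\mathbf{T}}$ is the Jacobian of the a.e.-injective map $\Phi$ relative to $\mu$, the change of variables formula gives $\int_W U_1 g\,d\mu = \int_{\Phi(\{\mathbf{T}<\infty\})} g\,d\mu \le \int_W g\,d\mu$ for $g\ge 0$; taking $g = |f|^2$ shows that $U$ is a contraction on $L^2(W,\mu)$, and the general case shows $U_1$ is a positive contraction on $L^1(W,\mu)$. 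By induction, $U^k\mathbf{1} = \sqrt{J_{t_k}}$ on $\{t_k<\infty\}$, where $\mathbf{1}$ is the constant function $1$. Now $|q(t)| \le (2\pi)^{-n+d}\int_{S\Sigma_t}\sqrt{J_t}\,d\mu$ by \eqref{Q def}, and time reversal (the map $\xi\mapsto-\xi$ preserves $\mu$, sends $S\Sigma_t$ to $S\Sigma_{-t}$, and intertwines $J_t$ and $J_{-t}$) shows that the negative range of $t$ obeys the same bound as the positive one, so it suffices to estimate the sum over $t > 0$. Interchanging the at-most-countable sum over $t$ with the integral over $W$, and dropping the constraint $t_k\in\mathcal T$ (which only discards nonnegative terms), gives
\[
	\sum_{t\in\mathcal T\cap(0,T]}\int_{S\Sigma_t}\sqrt{J_t}\,d\mu \;\le\; \sum_{k\ge 1}\int_{\{t_k\le T\}}(U^k\mathbf{1})\,d\mu \;\le\; \sum_{1\le k\le T/\delta_0}\|U^k\mathbf{1}\|_{L^1(\mu)},
\]
where the last step uses $\int_{\{t_k\le T\}}(U^k\mathbf{1})\,d\mu \le \|U^k\mathbf{1}\|_{L^1(\mu)}$ together with $t_k \ge k\delta_0$, so that $\{t_k\le T\}=\emptyset$ once $k > T/\delta_0$.

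It remains to see that $\tfrac1N\sum_{k=1}^{N}\|U^k\mathbf{1}\|_{L^1(\mu)} \to 0$, and this is where the hypothesis enters. Invariant $L^1$ measures of the first return map on $SN^*H$ are, by definition, exactly the nonnegative $L^1$ fixed points of $U_1$. If $f\in L^2(W,\mu)$ were a nonzero fixed point of $U$, then, $U$ being a weighted composition operator, $U|f| = |Uf| = |f|$, and hence $U_1(|f|^2) = (U|f|)^2 = |f|^2$; thus $|f|^2$ would be a nonzero nonnegative $L^1$ fixed point of $U_1$, i.e.\ a nontrivial invariant measure, contrary to hypothesis. Therefore $\ker(I-U) = \{0\}$ in $L^2(W,\mu)$, so the von Neumann mean ergodic theorem gives $\tfrac1N\sum_{k=0}^{N-1}U^k\mathbf{1} \to 0$ in $L^2(W,\mu)$, hence (as $\mu$ is finite) in $L^1(W,\mu)$; and because $U^k\mathbf{1}\ge 0$, the $L^1$ norm of this average equals $\tfrac1N\sum_{k=0}^{N-1}\|U^k\mathbf{1}\|_{L^1(\mu)}$, which therefore tends to $0$. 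Feeding this into the displayed estimate and using $1/\lfloor T/\delta_0\rfloor \ge \delta_0/T$,
\[
	\frac1T\sum_{t\in\mathcal T\cap(0,T]}\int_{S\Sigma_t}\sqrt{J_t}\,d\mu \;\le\; \frac{1}{\delta_0}\cdot\frac{1}{\lfloor T/\delta_0\rfloor}\sum_{k=1}^{\lfloor T/\delta_0\rfloor}\|U^k\mathbf{1}\|_{L^1(\mu)} \;\longrightarrow\; 0 \qquad (T\to\infty),
\]
which, combined with the time-reversal symmetry, is exactly \eqref{averaging condition}.

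The step I expect to cause the most trouble is the measure-theoretic setup of the second paragraph: promoting the first return dynamics to a bona fide $L^2$-contraction $U$ with the cocycle identity $U^k\mathbf{1} = \sqrt{J_{t_k}}$. One must contend with the discontinuities of $\mathbf{T}$ and $\Phi$ (grazing versus transverse conormal returns), with the a.e.\ injectivity of $\Phi$, with the validity of the change of variables formula for the possibly badly-behaved map $\Phi$, and with the geometric lower bound $\mathbf{T}\ge\delta_0$; none of these is deep, but they must be assembled carefully. By contrast, the functional-analytic heart --- the one-line deduction of $\ker(I-U)=\{0\}$ from the hypothesis via $U_1(|f|^2) = (U|f|)^2$, followed by the mean ergodic theorem --- is short.
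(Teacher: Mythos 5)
Your proof is correct and streamlines the paper's argument in two respects. Both proofs are built around the same engine: bound $|q(t)|$ by $(2\pi)^{-n+d}\int_{S\Sigma_t}\sqrt{J_t}\,d\mu$, recognize the cumulative Jacobians $\sqrt{J_{t_k}}$ as $U^k\mathbf{1}$ where $U$ is the first-return transfer operator, and then invoke von Neumann's mean ergodic theorem together with the hypothesis to kill the limiting projection. The difference is how each handles the failure of $U$ to be an isometry on all of $L^2(SN^*H)$. The paper works harder: it first exhibits a positive-definite form $A_\rho = \widehat\rho(0)I + \sum_t\widehat\rho(-t)U_t$ (via the operator-theoretic Lemma \ref{pos definite}), uses the resulting seminorm triangle inequality to split $\mathbf{1} = \mathbf{1}_{S\Sigma_\infty} + \mathbf{1}_{B_1} + \mathbf{1}_{B_2}$, disposes of the finitely-returning pieces $B_1,B_2$ by a counting and dominated-convergence argument, and then invokes von Neumann on the restriction of $U$ to $L^2(S\Sigma_\infty)$, where a separate lemma shows $U$ is an \emph{isometry}. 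You instead observe that the change-of-variables formula makes $U$ a contraction on all of $L^2(SN^*H)$ and that the mean ergodic theorem holds verbatim for contractions (because $\ker(I-U)=\ker(I-U^*)$ for any $\|U\|\le1$), and that the hypothesis kills $\ker(I-U)$ directly via $U_1(|f|^2)=(U|f|)^2=|f|^2$; this makes the decomposition into $S\Sigma_\infty,B_1,B_2$ and the positive-definite form $A_\rho$ unnecessary. One small gain the paper's route has is that its seminorm setup would also localize easily if one wanted to estimate $\sum|q(t)|$ along microlocal pieces; your direct route buys a shorter and more self-contained proof, at the cost of needing to verify the contraction property and the a.e.-injectivity of the first return map, both of which you handle correctly (injectivity follows because a shorter common return would contradict minimality of the first-return time, and the isomorphism condition in \eqref{structured looping direction} is preserved under composition). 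Both proofs rely on the uniform lower bound $\mathbf T\ge\delta_0>0$, which the paper also uses implicitly (and makes explicit via rescaling), so your flagging of it as needing care is well taken but unproblematic.
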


We are also able to show that, even in the general setting, the condition \eqref{averaging condition} is satisfied when the set of recurrent directions in $SN^*H$ is measure zero. It would be interesting to know if \eqref{averaging condition} is strictly weaker. To state this precisely, we recall (e.g. from \cite{STZ}) that the set of recurrent directions $\mathcal R$ are those $(x,\xi) \in SN^*H$ such that, for each open neighborhood $U$ of $(x,\xi)$ in $SN^*H$, there exists $t \neq 0$ with $G^t(x,\xi) \in U$.

\begin{proposition} \label{recurrent} If the set $\mathcal R$ of recurrent directions has measure zero in $SN^*H$, then \eqref{averaging condition} is satisfied.
\end{proposition}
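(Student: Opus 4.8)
The plan is to deduce this from Theorem~\ref{invariant function}: I will show that $\sigma(\mathcal R)=0$ forces the first return map on $SN^*H$ to admit no nontrivial invariant $L^1$ measure, after which Theorem~\ref{invariant function} gives \eqref{averaging condition}. So suppose $f\,dx\,d\xi$ is an invariant $L^1$ measure of the first return map; we may take $f\ge 0$ (if not, apply the argument to $|f|$, which is again invariant since $J_{\mathbf T}>0$). Let $d\sigma$ denote the natural density on $SN^*H$ and put $\mu:=f\,d\sigma$, a finite Borel measure (finiteness is equivalent to $f\in L^1$ since, by compactness of $H$, the coordinate density $dx\,d\xi$ and $d\sigma$ are comparable). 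Write $\Phi:=G^{\mathbf T}\colon\{\mathbf T<\infty\}\to SN^*H$ for the first return map; by \eqref{structured looping direction}--\eqref{def J} this is a partially defined Borel bijection whose local Jacobian with respect to $d\sigma$ is exactly $J_{\mathbf T}$. Unwinding the change-of-variables formula, the defining relation $f=(f\circ\Phi)\,J_{\mathbf T}$ on $\{\mathbf T<\infty\}$ together with $f=0$ on $\{\mathbf T=\infty\}$ says precisely that $\mu$ is concentrated on $\{\mathbf T<\infty\}$ and that $\Phi_*\mu=\mu$.

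Next I would invoke the Poincar\'e recurrence theorem for the measure-preserving system $\bigl(\{\mathbf T<\infty\},\mu,\Phi\bigr)$. This is legitimate: $\mu$ is finite, $\mu(\{\mathbf T=\infty\})=0$, and by $\Phi$-invariance $\mu$-almost every point stays in the domain of $\Phi$ under all forward and backward iterations, so $\Phi$ is an honest $\mu$-preserving invertible map off a $\mu$-null set. Fixing a countable basis of open subsets of $SN^*H$ and applying recurrence to each basis element of positive $\mu$-measure, we obtain that $\mu$-a.e.\ $(x,\xi)$ is recurrent for $\Phi$: for every neighborhood $U$ of $(x,\xi)$ there is $N\ge 1$ with $\Phi^N(x,\xi)\in U$. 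Since $\Phi^N(x,\xi)=G^{s}(x,\xi)$ for $s=\mathbf T(x,\xi)+\mathbf T(\Phi(x,\xi))+\cdots>0$, this exhibits $(x,\xi)$ as an element of the recurrent set $\mathcal R$. Hence $\mu\bigl(SN^*H\setminus\mathcal R\bigr)=0$.

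Finally, the hypothesis $\sigma(\mathcal R)=0$ also gives $\mu(\mathcal R)=\int_{\mathcal R}f\,d\sigma=0$, so $\mu\equiv 0$, i.e.\ $f=0$ $\sigma$-a.e. Thus the only invariant $L^1$ measure of the first return map on $SN^*H$ is the trivial one, and Theorem~\ref{invariant function} yields \eqref{averaging condition}, completing the proof.

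I expect the only genuinely delicate point to be the bookkeeping in the first paragraph: verifying that the invariance identity as written (in terms of the coordinate density $f\,dx\,d\xi$ and the Jacobian $J_{\mathbf T}$ of \eqref{def J}) is equivalent to the pushforward invariance $\Phi_*\mu=\mu$ of the associated measure, and that the $\{\mathbf T=\infty\}$ clause makes $\mu$ concentrate on the domain of $\Phi$ so that Poincar\'e recurrence applies to an everywhere-defined (mod $\mu$-null) bijection. Once that identification is in hand, the remaining steps are a direct appeal to Poincar\'e recurrence and to Theorem~\ref{invariant function}.
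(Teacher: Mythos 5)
Your strategy---reducing Proposition \ref{recurrent} to Theorem \ref{invariant function}---is not the paper's route (the paper gives a direct argument: it covers the complement of a small neighborhood $U$ of a $\delta$-thickening of $\mathcal R$ by finitely many $\delta/2$-balls to show each orbit visits $U^c$ at most $N_\delta$ times, and then controls $\frac1T\sum_{|t|\le T}|q(t)|$ by Cauchy--Schwarz), and the step you dismiss as ``bookkeeping'' is in fact a genuine gap. The defining relation $f=(f\circ\Phi)J_{\mathbf T}$ on $\{\mathbf T<\infty\}$, $f=0$ on $\{\mathbf T=\infty\}$, does \emph{not} say that $\Phi_*\mu=\mu$ for $\mu=f\,d\sigma$. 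Unwinding the change of variables on each piece $D_j=\{\mathbf T=T_j\}$ (where $\Phi=G^{T_j}$ is injective with Jacobian $J_{T_j}$) gives
\[
\Phi_*\mu=\Bigl(\sum_j \mathbf 1_{G^{T_j}(D_j)}\Bigr)\,\mu \;=\; m\,\mu,
\]
where $m(y,\eta)$ counts the first-return preimages of $(y,\eta)$. Equivalently, the hypothesis yields $\mu(\Phi(E))=\mu(E)$ for $E$ on which $\Phi$ is injective, and only $\int(m-1)\,d\mu=0$ globally; it does \emph{not} yield $\mu(\Phi^{-1}A)=\mu(A)$. The first-return map here is neither surjective (points of $SN^*H$ need not be first returns of anything, so $m=0$ is possible) nor injective (a point can be the first return of $G^{-T_1}(y,\eta)$ and of $G^{-T_2}(y,\eta)$ with $T_1<T_2$ whenever the intermediate arrival at time $T_2-T_1$ violates \eqref{structured looping direction}, so $m\ge 2$ is possible). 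Without $m=1$ $\mu$-a.e., neither the backward form of Poincar\'e recurrence (which needs $\mu(\Phi^{-1}A)\ge\mu(A)$, i.e.\ $m\ge 1$ a.e.) nor the forward form (which needs injectivity to make $B,\Phi(B),\Phi^2(B),\dots$ disjoint with non-shrinking measure) applies, and your conclusion that $\mu$-a.e.\ point is recurrent is unjustified. Indeed, the Jacobian weight can absorb escape to infinity: abstractly, $f=(f\circ\Phi)J$ with $J>1$ admits nontrivial summable solutions along a wandering orbit, so the implication ``$\sigma(\mathcal R)=0\Rightarrow$ only the trivial invariant $L^1$ measure'' is exactly the nontrivial content you would have to prove, not a routine identification.

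If the first-return map were known to be essentially injective on $\{f>0\}$, your argument could be completed ($m\le 1$ together with $\int(m-1)\,d\mu=0$ forces $m=1$ $\mu$-a.e., and then backward Poincar\'e recurrence goes through); but establishing that injectivity, or otherwise ruling out the multiplicity pathology, requires a substantive argument about degenerate returns that you have not supplied. The remaining steps of your writeup (reduction to $f\ge 0$, absolute continuity $\mu\ll\sigma$, and the observation that $\Phi$-recurrence implies membership in $\mathcal R$) are fine, but the proof does not stand without closing this gap; I recommend either proving the essential bijectivity of the first-return map on $\operatorname{supp}f$ or adopting the direct covering argument.
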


\subsection{Examples}

We give two examples to illustrate the theorems above. First, let $M = \T^2 = \R^2/2\pi \Z^2$ be the standard two-dimensional flat torus. There is a natural choice of eigenbasis, and that is the one consisting of $L^2$-normalized Fourier exponentials,
\[
	e_m(x) = (2\pi)^{-1} e^{i\langle x, m \rangle} \qquad m \in \Z^2.
\]
We let $H$ here denote the unit distance circle about $0$. Note, the period integrals are given by
\[
	\int_H e_m \, dV_H = \frac{1}{2\pi} \int_0^{2\pi} e^{i|m|\sin \theta} \, d\theta = J_0(|m|),
\]
where $J_0$ is the Bessel function with index $0$. Using the asymptotics of the Bessel function, one can show that $N$ has asymptotics
\[
	N(\lambda) = \sum_{|m| \leq \lambda} |J_0(|m|)|^2 = 2 \lambda - \cos(2\lambda) + o(1) + C
\]
where $C$ comes from an accumulation of low-order errors. The graph of $N(\lambda)$ against the main term $2\lambda$ is depicted in Figure \ref{bessel1}. Curiously, the graph suggests $C = 0$, and it would be interesting to know if this is the case in general for $n = 2$ and $d = 1$.
\begin{figure}
\includegraphics[width=0.8\textwidth]{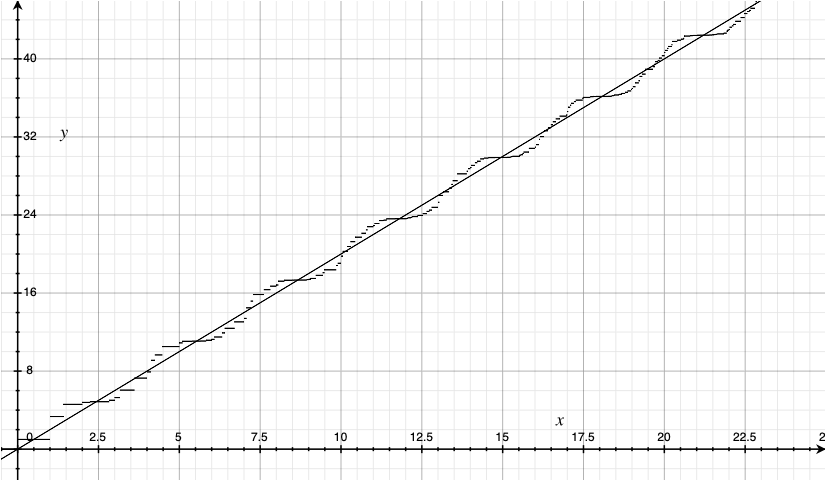}
\caption{A graph of $N(\lambda)$ vs. $2\lambda$ in the example $M = \mathbb T^2
	$ and $H = S^1$.}
\label{bessel1}
\end{figure}

The oscillating term $\cos(2\lambda)$ is reflected in the structure of the normal geodesics to $H$. The diameters of the circle yield a positive measure set of elements in $S\Sigma_t$ with $|t| = 2$. Specifically, $S\Sigma_2$ consists of all inward-pointing unit normals, and $S\Sigma_{-2}$ consists of all outward-pointing unit normals. Furthermore, $S\Sigma_t$ has measure zero 
for $|t| \neq 2$ or $0$, and thus $\mathcal T=\{-2,2\}$. Theorems \ref{Q asymptotics} and \ref{Q continuous} show that we should expect an oscillating second term with angular frequency $2$.

The second example is where $M$ is a triaxial ellipsoid (again, $\dim M = 2$). Let $x_0$ be an umbilical point. Recall, every geodesic departing $x_0$ passes through its antipode $-x_0$ and loops back to $x_0$ at a common time, say $T_0$. Now, take $H$ to be the circle of points which are equidistant from $x_0$ and $-x_0$ --- namely the circle of radius $T_0/4$ centered at $x_0$. By Gauss' lemma, any geodesic departing $H$ in the normal direction necessarily passes through $x_0$. Furthermore, expressing $H$ as the circle of radius $T_0/4$ in geodesic normal coordinates about $x_0$ allows us to deduce dynamics on $SN^*H$ from the dynamics on $S_{x_0}^*M$. In particular, $S\Sigma_t = SN^*H$ if $t$ is an integer multiple of $T_0/2$ and empty otherwise. Furthermore, the recurrent directions in $SN^*H$ have measure zero. Hence, Theorems \ref{Q asymptotics} and \ref{Q continuous} and Proposition \ref{recurrent} yield nice asymptotics for the corresponding counting function $N(\lambda)$.

\subsection*{Acknowledgements} Both authors are grateful to Steve Zelditch for suggesting the problem and for providing his invaluable input on an early draft of the paper. Steve Zelditch also communicated to the authors that he initiated a project in studying the two-term asymptotics for several other cases, including ladder sums for  Fourier coefficients of restricted eigenfunctions as in \cite{WXZ1,WXZ2}, and the asymptotics for $L^2$ restrictions of eigenfunctions. The authors want to thank an anonymous referee for many helpful historical comments.


\section{Proof of Theorem \ref{Q asymptotics}} \label{TAUBERIAN}

We will require the following three propositions, which are proved later in Sections \ref{SINGULARITY AT ZERO}, \ref{SINGULARITIES OFF ZERO} and \ref{Q NICE}, respectively. Their proofs make up the bulk of the work. 

The first proposition computes the big singularity of $\widehat N'$ at the origin.
\begin{proposition} \label{big singularity prop} Let $\rho$ be a real-valued Schwartz-class function on $\R$ with small Fourier support, with $\widehat \rho$ real-valued and $\widehat \rho(0) = 1$. Then,
	\[
	N' * \rho(\lambda) = (2\pi)^{-n+d} \vol(H) \vol(S^{n-d-1}) \lambda^{n - d - 1} + O(\lambda^{n - d - 3}).
	\]
	It follows
	\[
	N * \rho(\lambda) = C_{H,M} \lambda^{n-d} + O(\lambda^{n-d-2} \log \lambda) + C,
	\]
	where the $\log \lambda$ is present to cover the case $n - d = 2$ and the constant term $C$ is here to cover the case $n - d = 1$.
\end{proposition}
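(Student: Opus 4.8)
The plan is to compute the Fourier transform of the smoothed spectral measure $N' * \rho$ and extract the asymptotics from its singularity at the origin, which is the dominant contribution once $\rho$ has sufficiently small Fourier support (so that the off-zero singularities of $\widehat{N'}$ are killed). First I would express $N'$ as the sum over eigenvalues $\sum_j |\int_H e_j \, dV_H|^2 \, \delta(\lambda - \lambda_j)$, relate its Fourier transform to the trace of a product of the wave operator $e^{it\sqrt{-\Delta_g}}$ with the restriction/extension operators associated to $H$, and write $N' * \rho(\lambda)$ as an oscillatory integral in $t$ with amplitude supported where $\widehat\rho$ lives. Concretely, $N'*\rho(\lambda) = \frac{1}{2\pi}\int \widehat\rho(t) e^{it\lambda} \sum_j |\int_H e_j|^2 e^{-it\lambda_j}\,dt$, and the inner sum near $t=0$ is governed by the geodesics leaving and returning to $H$ normally with return time near $0$ — only the trivial ones ($t=0$).

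The core of the argument is a stationary phase computation near $t = 0$. After microlocalizing, the relevant integral takes the form $\int e^{i\lambda \phi} a \, dt \, dx' \, d\xi'' \, \ldots$ where the phase $\phi$ vanishes to the appropriate order along the diagonal-type set corresponding to $\Sigma_0 = SN^*H$. I would apply a stationary phase expansion; the leading term gives the stated constant $(2\pi)^{-n+d}\vol(H)\vol(S^{n-d-1})$ times $\lambda^{n-d-1}$ — this is the ``easy'' principal part, essentially a volume computation over $SN^*H$. The crucial subtlety the excerpt flags is that the \emph{subprincipal} term must vanish, giving an error $O(\lambda^{n-d-3})$ rather than $O(\lambda^{n-d-2})$. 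I expect this to be the main obstacle: one must show that the first correction in the stationary phase expansion integrates to zero, which is where Lemma~\ref{subprincipal lemma} (a stationary phase statement for real oscillatory integrals with real symbols) is invoked. The vanishing should follow from a symmetry/parity argument — the phase is even and the symbol real, so the odd-order correction terms in the expansion cancel, analogous to how the subprincipal symbol of the wave trace behaves — but the geometric bookkeeping with the density $|g_H(x')|/|g(x)|^{1/2}|dx'\,d\xi''|$ and the normal Jacobian has to be handled carefully.

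Once the first claim $N' * \rho(\lambda) = (2\pi)^{-n+d}\vol(H)\vol(S^{n-d-1})\lambda^{n-d-1} + O(\lambda^{n-d-3})$ is in hand, the second claim follows by integration. Writing $N*\rho(\lambda) = \int_{-\infty}^\lambda N'*\rho(s)\,ds$ (up to a constant of integration absorbing the behavior at $-\infty$), I would integrate the main term to get $(2\pi)^{-n+d}\vol(H)\vol(S^{n-d-1})\frac{\lambda^{n-d}}{n-d} = C_{H,M}\lambda^{n-d}$ after recognizing $\vol(S^{n-d-1})/(n-d) = \vol(B^{n-d})$, and integrate the remainder $O(s^{n-d-3})$ to get $O(\lambda^{n-d-2})$ when $n - d > 2$, $O(\log\lambda)$ when $n-d = 2$, and $O(1)$ (a genuine constant from the tail) when $n - d = 1$. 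This accounts for the $\log\lambda$ factor and the additive constant $C$ in the statement. The remaining point is to justify that the lower-order singularities of $\widehat{N'}$ away from $t = 0$ contribute nothing to $N'*\rho$: this is immediate since $\widehat\rho$ is chosen with support so small that it meets the wavefront set of $N'$ only at $t = 0$ — there is a positive lower bound on the return times in $\mathcal{T}$, so shrinking $\supp\widehat\rho$ below that bound suffices.
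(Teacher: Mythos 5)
Your proposal follows the paper's argument in all essentials: symmetrize and Fourier-invert to an integral against $\cos(tP)$, substitute the Hadamard parametrix near $t=0$, discard the non-stationary pieces, reduce to a stationary-phase computation over $(t,r,x',\xi')$ with $(y',\omega)\in H\times S^{n-d-1}$ as parameters, invoke Lemma~\ref{subprincipal lemma} to control the subprincipal term, and then integrate in $\lambda$ to get the second display with the $\log\lambda$ and the constant $C$ appearing exactly as you predict. One thing worth correcting in your verbal description of the key step: the vanishing of the subprincipal term does \emph{not} come from the phase being even (the phase $\varphi(x,y,\xi)+t(p(y,\xi)-1)$ has no useful parity). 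What Lemma~\ref{subprincipal lemma} actually exploits is that H\"ormander's first correction term $L_y a_0$ carries an explicit overall factor of $i^{-1}$ acting on real data, hence is purely imaginary whenever $a_0$ is real; since $N'*\rho(\lambda)$ is a priori real, and (by a direct signature computation or by positivity of the leading term) the Maslov prefactor $e^{\pi i\sigma/4}$ equals $1$, the imaginary subprincipal contribution must cancel, giving $O(\lambda^{n-d-3})$ rather than $O(\lambda^{n-d-2})$. That reality argument, not a parity-of-phase argument, is the paper's mechanism.
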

The second proposition concerns the singularities away from the origin.
\begin{proposition}\label{off zero singularities} Let $\chi$ be a Schwartz-class function on $\R$ with compact Fourier support in $\R \setminus 0$. Then,
	\[
	N' * \chi(\lambda) = \lambda^{n-d-1} \sum_{t\in\mathcal T} \widehat \chi(-t) e^{-it\lambda} q(t) + o(\lambda^{n-d-1}),
	\]
	where $q$ is as in \eqref{Q def}.
\end{proposition}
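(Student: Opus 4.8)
\textbf{Plan of proof for Proposition \ref{off zero singularities}.}
The starting point is the standard representation of $N'$ as a spectral measure paired against the wave kernel. Writing $N'(\lambda) = \sum_j |\int_H e_j \, dV_H|^2 \, \delta(\lambda - \lambda_j)$, one has for any Schwartz $\chi$ with $\widehat\chi$ compactly supported in $\R\setminus 0$,
\[
N'*\chi(\lambda) = \frac{1}{2\pi}\int_{\R} \widehat\chi(-t)\, e^{-it\lambda}\, \Upsilon(t)\, dt, \qquad \Upsilon(t) = \sum_j \Bigl|\int_H e_j\,dV_H\Bigr|^2 e^{it\lambda_j}.
\]
The distribution $\Upsilon(t)$ is, up to elementary factors, $\int_H\int_H (e^{it\sqrt{-\Delta_g}})(x,y)\,dV_H(x)\,dV_H(y)$, which is a Fourier integral distribution whose wavefront set lies over the geodesics that depart $H$ conormally, travel for time $t$, and return to $H$ conormally. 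So the first step is to microlocalize: since $\widehat\chi$ is supported away from $0$, only the contributions of times $t$ with $S\Sigma_t\neq\emptyset$, i.e. $t\in\mathcal T$, survive, and near each such $t$ we represent the relevant piece of $\Upsilon$ as an oscillatory integral over $SN^*H$ (or rather over $\dot N^*H$) using a parametrix for the half-wave group and the calculus for the composition $\dot N^*H \to T^*M \to \dot N^*H$ associated to the canonical relation of $e^{it\sqrt{-\Delta}}$ restricted and corestricted to $H$.

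The heart of the matter is the stationary-phase analysis of this oscillatory integral. Under Duistermaat–Guillemin's clean-intersection hypothesis the phase would have a nondegenerate critical manifold and we could read off the principal symbol directly; here we do \emph{not} assume cleanness, so the phase may have flat directions. The plan is to invoke Lemma \ref{very stationary phase lemma} (the oscillatory-integral estimate allowing phase functions with flat parts) to extract the leading-order asymptotics in $\lambda$: the amplitude contributes a factor $\lambda^{n-d-1}$ from the homogeneity of the symbol of the half-wave kernel restricted to $H\times H$ (dimension count: $\codim H - 1$ from the Leray measure plus the density normalizations), and the stationary set of the phase is exactly $S\Sigma_t$ because the condition \eqref{structured looping direction} is precisely the statement that $dG^t$ is an isomorphism on the tangent spaces to $\dot N^*H$, i.e. that the looping direction is nondegenerate in the relevant sense. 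The principal contribution then localizes to $S\Sigma_t$ and integrates the symbol against the natural measure; the symbol computation produces the Jacobian factor $\sqrt{J_t}$ (half-density transfer under the flow, using \eqref{def J}) and the Maslov factor $i^{\sigma_t}$, giving exactly $q(t)$ as in \eqref{Q def}, together with the oscillation $e^{-it\lambda}$ and the weight $\widehat\chi(-t)$. Summing over the finitely many $t\in\mathcal T$ in the support of $\widehat\chi$ yields the stated formula, with the error terms (both the subleading power of $\lambda$ and the contribution of the non-stationary / flat part of the phase) absorbed into $o(\lambda^{n-d-1})$.

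A few technical points need care along the way. First, one should check that $\Upsilon(t)$ has no singularity at the finitely many $t\in\mathcal T\cap\supp\widehat\chi$ beyond the one captured by the above parametrix — i.e. that the wavefront set analysis genuinely confines everything to the flowout of $\dot N^*H$; this is where $G^t$ being a diffeomorphism of $\dot T^*M$ and the compactness of $H$ are used. Second, since $S\Sigma_t$ need not be all of $SN^*H$ and may be a proper (possibly lower-dimensional, possibly fractal-ish) subset, one must set up the stationary phase so that the leading term is an integral over $S\Sigma_t$ with respect to its induced measure and the complement contributes only to the remainder; this is exactly the role of the flat-phase lemma, which lets one avoid quantifying the degeneracy of the phase off $S\Sigma_t$. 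Third, one must confirm the $o(\cdot)$ is uniform enough to sum over $\mathcal T\cap\supp\widehat\chi$ — but that set is finite, so uniformity is automatic once each term is handled.

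\textbf{Main obstacle.} The principal difficulty is the stationary-phase step without cleanness: extracting the sharp power $\lambda^{n-d-1}$ and the correct leading coefficient $\widehat\chi(-t)e^{-it\lambda}q(t)$ when the phase governing the return map may be flat in some directions. The clean-composition calculus of Duistermaat–Guillemin is unavailable, so the argument must instead rest on the careful oscillatory-integral estimate of Lemma \ref{very stationary phase lemma}, and on correctly identifying that the nondegeneracy condition \eqref{structured looping direction} is exactly what pins the leading contribution to $S\Sigma_t$ with the measure, Jacobian $\sqrt{J_t}$, and Maslov index $\sigma_t$ as in the definition of $q$. Getting the half-density bookkeeping (hence the power $\tfrac12$ on $J_t$) and the Maslov factor to come out consistently with Lemma \ref{group-ish} is the delicate computational core.
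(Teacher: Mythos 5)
Your overall direction matches the paper: represent $N'*\chi$ via the half-wave kernel paired against $\delta_H \otimes \delta_H$, identify the relevant Lagrangian $\Lambda = \{(t,-p,G^t(x,\xi)) : (x,\xi)\in\dot N^*H\}$, and use the flat-phase Lemma \ref{very stationary phase lemma} rather than clean-composition calculus. But there is a genuine gap in your plan: you never say how to handle the portion of phase space where the intersection of $\Lambda_t$ with $\dot N^*H$ is \emph{not} maximal, i.e.\ where \eqref{maximal intersection} fails. You treat Lemma \ref{very stationary phase lemma} as if it automatically absorbs everything outside $S\Sigma_t$ into the remainder, but the lemma (and its refinement Lemma \ref{variable very stationary phase lemma}) requires a quite specific structure: the phase must be nondegenerate in a chosen block of variables, and to get that you need $\Lambda$ to be written as a Lagrangian section over $(t,y',\eta'')$ via a generating function $\psi$. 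That representation only exists locally near points satisfying \eqref{maximal intersection}, precisely because the projection $\Lambda_t\to\R^d\times\R^{n-d}$, $(y,\eta)\mapsto(y',\eta'')$, is submersive exactly there. Away from such points, you cannot set up the oscillatory integral in the form the lemma wants.

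The paper deals with this by inserting a pseudodifferential partition of unity $1=A+B$, with $A$ microlocalized near the ``maximal intersection'' points and $B$ supported away. The $A$-piece is the one you describe (generating function $\psi$, polar coordinates $\eta''=r\omega$, nondegenerate stationary phase in $(t,r)$ keyed on \eqref{t derivative of psi is p}, then the flat-phase lemma in $(y',\omega)$, with Lemma \ref{minimal rank lemma} identifying the contributing set as $S\Sigma_{-t}$). But the $B$-piece requires a separate argument — Lemma \ref{B lem} — which is a rank computation on the full Hessian of a nondegenerate homogeneous phase parametrizing $\Lambda$, showing that away from \eqref{maximal intersection} one has $\rank\phi'' \geq N-(n-d)+3$ on the critical set, yielding the extra half power $O(\lambda^{n-d-3/2})$. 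That rank argument is the delicate step your plan omits, and it is not a corollary of the flat-phase lemma; without it, the proposal does not actually prove that the contribution from the non-section region is $o(\lambda^{n-d-1})$. Incorporating the $A/B$ decomposition and supplying the Hessian-rank estimate for the $B$-piece would close the gap and align your argument with the paper's.
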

Our last proposition guarantees that $Q$ is a well-defined function and has nice properties. Interestingly, the proof of this proposition relies heavily on that $q(t)$ is a dynamical object.
\begin{proposition}\label{Q nice}
	$Q$ is a bounded function such that $(n-d)C_{H,M} \lambda + Q(\lambda)$ is monotone increasing.
\end{proposition}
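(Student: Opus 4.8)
The plan is to deduce both assertions from the nonnegativity and monotonicity of the underlying counting function $N$, after passing through the smoothed Kuznecov asymptotics that the rest of the paper establishes. First I would fix a Schwartz function $\rho\ge 0$ with $\hat\rho$ even, nonnegative, equal to $1$ at $0$ and supported in $[-\delta,\delta]$, and set $\rho_\epsilon(\lambda)=\epsilon^{-1}\rho(\lambda/\epsilon)$ for $\epsilon\in(0,1]$, so $\widehat{\rho_\epsilon}(t)=\hat\rho(\epsilon t)$. The mollified counting function $N_\epsilon:=\rho_\epsilon * N$ is then smooth and nondecreasing, since $N_\epsilon'=\rho_\epsilon * dN\ge0$, and the wave-kernel analysis carried out in the body of the paper yields
\begin{equation}\label{mollified expansion}
	N_\epsilon(\lambda)=C_{H,M}\lambda^{n-d}+Q_\epsilon(\lambda)\,\lambda^{n-d-1}+o_\epsilon(\lambda^{n-d-1}),\qquad Q_\epsilon(\lambda):=\sum_{t\in\mathcal T}\hat\rho(\epsilon t)\,\frac{q(t)}{-it}\,e^{-it\lambda},
\end{equation}
in which the $t=0$ contribution supplies the main term, each $t\in\mathcal T$ supplies the summand with coefficient $q(t)$ from \eqref{Q def}, and the sum over $\mathcal T$ is finite because $\mathcal T$ is discrete; thus each $Q_\epsilon$ is a trigonometric polynomial, hence a uniformly (Bohr) almost periodic function.

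For boundedness, I would combine Zelditch's remainder bound \eqref{N def}, which gives $N(\lambda)=C_{H,M}\lambda^{n-d}+O(\lambda^{n-d-1})$, with the convolution identity: expanding $(\lambda-s)^{n-d}$ and using that the $j$-th moment of $\rho_\epsilon$ is $O(\epsilon^j)$ shows $N_\epsilon(\lambda)=C_{H,M}\lambda^{n-d}+O(\lambda^{n-d-1})$ with the implied constant independent of $\epsilon\in(0,1]$. Comparing with \eqref{mollified expansion} gives $\limsup_{\lambda\to\infty}|Q_\epsilon(\lambda)|\le C$ for a constant $C$ not depending on $\epsilon$, and since the supremum over $\R$ of a Bohr almost periodic function equals its $\limsup$ at $+\infty$, we obtain $\|Q_\epsilon\|_{L^\infty}\le C$ for every $\epsilon$. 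Letting $\epsilon\to0^+$ we have $\hat\rho(\epsilon t)\to 1$ for each $t$, so by dominated convergence on the frequency side (here $|q(t)|\lesssim\vol(SN^*H)$, $1/|t|$ is bounded on $\mathcal T$, and $\mathcal T$ grows at most polynomially) $Q_\epsilon\to Q$ in $\mathcal D'(\R)$, where $Q$ is the series/distribution of \eqref{Q def}; as a weak-$*$ limit of functions bounded by $C$, this $Q$ is a bounded function, so the expression in \eqref{Q def} does define a bounded function as claimed.

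For monotonicity, write $M_\epsilon(\lambda):=(n-d)C_{H,M}\lambda+Q_\epsilon(\lambda)$ and fix $h>0$. Feeding $N_\epsilon(\lambda)\le N_\epsilon(\lambda+h)$ into \eqref{mollified expansion}, the difference $C_{H,M}[(\lambda+h)^{n-d}-\lambda^{n-d}]=(n-d)C_{H,M}h\,\lambda^{n-d-1}+O_h(\lambda^{n-d-2})$ together with $\|Q_\epsilon\|_{L^\infty}<\infty$ (from the previous paragraph, which controls the lower-order cross term $Q_\epsilon(\lambda+h)[(\lambda+h)^{n-d-1}-\lambda^{n-d-1}]$) lets me divide by $\lambda^{n-d-1}$ and conclude $Q_\epsilon(\lambda)-Q_\epsilon(\lambda+h)\le (n-d)C_{H,M}h+o_\epsilon(1)$ as $\lambda\to\infty$. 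Since $\lambda\mapsto Q_\epsilon(\lambda)-Q_\epsilon(\lambda+h)$ is again Bohr almost periodic, its supremum over $\R$ equals its $\limsup$ at $+\infty$, so in fact $Q_\epsilon(\lambda)-Q_\epsilon(\lambda+h)\le(n-d)C_{H,M}h$ for every $\lambda$; that is, $M_\epsilon(\lambda)\le M_\epsilon(\lambda+h)$ for all $\lambda$ and $h>0$, so $M_\epsilon$ is nondecreasing for each $\epsilon$. Then $M_\epsilon\to M:=(n-d)C_{H,M}\lambda+Q(\lambda)$ in $\mathcal D'$, hence $M'=\lim M_\epsilon'$ there; since each $M_\epsilon'\ge0$ as a measure and a weak limit of nonnegative measures is a nonnegative measure, $M'\ge0$, so $M$ agrees almost everywhere with a nondecreasing function, and since $Q$ is a fixed bounded function this is the desired representative.

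The substantive input is the expansion \eqref{mollified expansion}; granting it, the rest is soft. The step I expect to need the most care is the de-mollification: the series \eqref{Q def} for $Q$ is in general only conditionally convergent — on the round sphere with $H$ a closed geodesic, $\mathcal T$ is an arithmetic progression, $|q(t)|$ does not decay, $Q$ is a bounded but discontinuous (sawtooth-type) function whose Fourier series is not absolutely summable, and the remainder in \eqref{N def} is genuinely of size $\lambda^{\codim H-1}$ — so one cannot argue termwise in $L^\infty$, and it is precisely the Bohr almost periodicity of each $Q_\epsilon$ that lets both the uniform bound and the monotonicity survive the weak-$*$ limit.
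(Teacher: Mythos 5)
Your argument has a genuine circularity/gap at its declared ``substantive input,'' the expansion \eqref{mollified expansion}. That expansion is not available at the point where Proposition \ref{Q nice} must be proved: in the paper, Proposition \ref{Q nice} is a \emph{prerequisite} for the two-term asymptotics of $N$, because the Fourier Tauberian theorem (Theorem B.4.1 of \cite{Saf}) that converts the wave-kernel data into Theorem \ref{Q asymptotics} requires both comparison functions to be monotone increasing --- and it is exactly the monotonicity of $(n-d)C_{H,M}\lambda + Q(\lambda)$ that supplies this. If instead you try to derive \eqref{mollified expansion} directly from what Sections \ref{SINGULARITY AT ZERO} and \ref{SINGULARITIES OFF ZERO} actually prove (Propositions \ref{big singularity prop} and \ref{off zero singularities}, which are statements about $N'*\rho$ and $N'*\chi$), you must integrate in $\lambda$, and the error $o_\epsilon(\lambda^{n-d-1})$ in the derivative asymptotics integrates to $o_\epsilon(\lambda^{n-d})$ --- the same size as the main term's increment and strictly larger than the $Q_\epsilon(\lambda)\lambda^{n-d-1}$ term you are trying to isolate. (The paper cannot improve this error to an integrable $O(\lambda^{n-d-2})$ because no cleanness of the intersections is assumed; that is the whole point of Lemma \ref{very stationary phase lemma}.) So both your boundedness argument and your monotonicity argument, which compare terms at scale $\lambda^{n-d-1}$, collapse. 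The paper's proof avoids this entirely by working purely dynamically: the group law of Lemma \ref{group-ish}, the first-return operator $U_z$ with $\|U_z\|<1$ for $\Im z<0$, and the positive-definiteness of $I+\sum_k(U_z^k+(U_z^k)^*)$ show that $(2\pi)^{-n+d}|SN^*H|+Q'$ is a positive measure, whence monotonicity; boundedness then follows from a one-term Tauberian theorem applied to $F(\lambda)=(n-d)C_{H,M}\lambda+Q(\lambda)$ after convolving away the oscillations.

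Two secondary points. First, your justification for $Q_\epsilon$ being a trigonometric polynomial (``$\mathcal T$ is discrete,'' ``$\mathcal T$ grows at most polynomially'') is unsupported: the paper only asserts $\mathcal T$ is countable, and $\mathcal T\cap[-T,T]$ need not be finite. What is true, and suffices for Bohr almost periodicity of $Q_\epsilon$, is $\sum_{t\in\mathcal T\cap[-T,T]}|q(t)|\lesssim T$, but proving this already requires the dynamical inputs (Lemma \ref{cauchy-schwarz} together with the uniform lower bound on return times), so it cannot be waved through. Second, your almost-periodicity device --- transferring an asymptotic inequality at $\lambda\to+\infty$ to a global one --- is sound and could in fact rescue a version of the monotonicity claim if applied one derivative down: from $N'*\rho_\epsilon\ge 0$ and Propositions \ref{big singularity prop} and \ref{off zero singularities} one gets $(n-d)C_{H,M}+\sum_t\widehat\rho(\epsilon t)q(t)e^{-it\lambda}\ge -o_\epsilon(1)$ as $\lambda\to\infty$, hence $\ge 0$ everywhere by almost periodicity, and then $\epsilon\to 0$ gives the positivity of $(n-d)C_{H,M}+Q'$ as a distribution. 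That would be a legitimate real-variable substitute for the paper's operator-theoretic positivity argument, but it is not what you wrote, and it still does not yield boundedness, for which you would need a separate Tauberian step as in the paper.
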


Our main tool will be the refined Tauberian theorem in Appendix B of \cite{Saf}, stated below for convenience.

\begin{theorem}[Theorem B.4.1 of \cite{Saf}] \label{tauberian}
	Let $N_1, N_2$ be tempered, monotone-increasing functions on $\R$ which vanish on the negative real numbers, and let $\rho$ be nonnegative, Schwartz-class functions on $\R$ with compact Fourier support and $\int \rho = 1$. If
	\begin{equation}\label{tauberian 1}
		N_1 * \rho(\lambda) = N_2 * \rho(\lambda) + o(\lambda^\alpha)
	\end{equation}
	and, for every Schwartz-class function $\chi$ with compact Fourier support in $\R \setminus 0$,
	\begin{equation}\label{tauberian 2}
		N_1' * \chi(\lambda) = N_2' * \chi(\lambda) + o(\lambda^\alpha),
	\end{equation}
	then,
	\[
	N_1(\lambda) \sim N_2(\lambda) + o(\lambda^\alpha).
	\]
\end{theorem}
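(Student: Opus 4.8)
The plan is to run a Tauberian argument in the spirit of Hörmander and Safarov--Vassiliev, using that \eqref{tauberian 1} and \eqref{tauberian 2} cover complementary frequency ranges — \eqref{tauberian 1} controls $N_1-N_2$ near frequency $0$, \eqref{tauberian 2} controls it away from $0$ — and that $N_1,N_2$ are both monotone. The goal is to upgrade these mollified statements to a pointwise comparison of $N_1$ and $N_2$ at the cost of an arbitrarily small shift $\epsilon(\lambda)\to 0$, which is exactly what ``$\sim$'' encodes.

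\emph{Step 1: self-improvement of \eqref{tauberian 1}.} First I would show $(N_1-N_2)*\psi(\lambda)=o(\lambda^\alpha)$ for every Schwartz $\psi$ with compact Fourier support (rate depending on $\psi$). Split $\psi=\psi_{\mathrm{lo}}+\psi_{\mathrm{hi}}$, with $\widehat{\psi_{\mathrm{lo}}}$ supported in a tiny neighbourhood of $0$ on which $\widehat\rho$ does not vanish: then $\psi_{\mathrm{lo}}=\rho*l$ with $l$ Schwartz, so $(N_1-N_2)*\psi_{\mathrm{lo}}=\big((N_1-N_2)*\rho\big)*l=o(\lambda^\alpha)$ by \eqref{tauberian 1} and the elementary fact that convolving an $o(\lambda^\alpha)$ tempered function with a Schwartz function preserves the bound. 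Meanwhile $\widehat{\psi_{\mathrm{hi}}}$ is supported away from $0$, so $\psi_{\mathrm{hi}}=\Psi_{\mathrm{hi}}'$ with $\widehat{\Psi_{\mathrm{hi}}}(\xi)=\widehat{\psi_{\mathrm{hi}}}(\xi)/(i\xi)$ Schwartz and Fourier-supported away from $0$, and $(N_1-N_2)*\psi_{\mathrm{hi}}=(N_1'-N_2')*\Psi_{\mathrm{hi}}=o(\lambda^\alpha)$ by \eqref{tauberian 2}. One also records that the rate is uniform over test functions with a common compact frequency window and bounded norm, via $\chi=\chi*\eta$ with $\widehat\eta\equiv 1$ on the window.

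\emph{Step 2: monotonicity sandwich and diagonalization.} Fix a nonnegative even Schwartz bump $\rho_0$ with $\int\rho_0=1$ and $\widehat{\rho_0}$ compactly supported, and for a scale $K\ge 1$ put $\rho_{0,K}(\mu)=K\rho_0(K\mu)$, an approximate identity at scale $1/K$ with Fourier support of radius $O(K)$. Since $\rho_0\ge 0$ and $N_1,N_2$ are monotone increasing and vanish on $(-\infty,0)$, for a window half-width $W\ge 1$ one gets $N_1(\lambda)\le N_1*\rho_{0,K}(\lambda+W/K)+R_1$ and $N_2*\rho_{0,K}(\lambda+W/K)\le N_2(\lambda+2W/K)+R_2$, with mirror-image lower bounds, where $R_1,R_2$ are tail terms bounded by the Schwartz decay of $\rho_0$ times the polynomial growth of $N_1,N_2$. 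Feeding in $N_1*\rho_{0,K}=N_2*\rho_{0,K}+o(\lambda^\alpha)$ from Step 1 yields $N_1(\lambda)\le N_2(\lambda+\epsilon)+(\text{errors})$ with $\epsilon\approx 2W/K$. Making $R_1,R_2$ genuinely $o(\lambda^\alpha)$ forces the window $W=W(\lambda)$ to grow (with the Schwartz order chosen accordingly), and then, to still have $\epsilon\to 0$, one must let $K=K(\lambda)\to\infty$ correspondingly; a diagonal choice of $(W,K)$ produces a monotone $\epsilon(\lambda)\to 0$ with
\[
N_2\big(\lambda-\epsilon(\lambda)\big)-o(\lambda^\alpha)\ \le\ N_1(\lambda)\ \le\ N_2\big(\lambda+\epsilon(\lambda)\big)+o(\lambda^\alpha),
\]
i.e. $N_1\sim N_2+o(\lambda^\alpha)$; when $N_2$ is uniformly continuous the shifts are removed and one gets equality.

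\emph{Main obstacle.} The crux is the coupled limit: once $K=K(\lambda)\to\infty$, $\rho_{0,K(\lambda)}$ is no longer a fixed test function, so the identity $(N_1-N_2)*\rho_{0,K(\lambda)}(\lambda)=o(\lambda^\alpha)$ — which rested on \eqref{tauberian 2}, valid for each \emph{fixed} $\chi$ — must be re-established with explicit control of how the rate degrades as the Fourier support, of radius $\sim K$, grows. I would handle this by decomposing the high-frequency part of $\rho_{0,K}$ into dyadic frequency pieces, writing each piece as a derivative so as to gain a factor $\sim 2^{-j}$ at frequency $2^j$ and — crucially — to keep the cancellation between the measures $N_1'$ and $N_2'$ rather than estimating them separately (the separate estimate only gives the useless $O(2^j\lambda^\alpha)$), applying the uniform form of \eqref{tauberian 2} on each fixed dyadic annulus, and choosing $K(\lambda)\to\infty$ slowly enough that the sum over the $O(\log K(\lambda))$ annuli is still $o(\lambda^\alpha)$, yet fast enough (relative to $W(\lambda)$) to absorb the tails $R_1,R_2$ and force $\epsilon(\lambda)\to 0$. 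Striking this balance is the delicate heart of the proof, and it is where \eqref{tauberian 1}, \eqref{tauberian 2}, and the monotonicity of \emph{both} $N_1$ and $N_2$ are used at once.
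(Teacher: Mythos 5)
The paper does not prove this statement at all---it is quoted verbatim from Safarov--Vassiliev \cite[Theorem B.4.1]{Saf}---so your argument has to stand on its own. Your Step 1 is sound: splitting a band-limited Schwartz $\psi$ into $\psi_{\mathrm{lo}}=\rho*l$ (legitimate since $\widehat\rho(0)=1$) and $\psi_{\mathrm{hi}}=\Psi_{\mathrm{hi}}'$ with $\widehat{\Psi}_{\mathrm{hi}}$ supported away from $0$, and feeding these into \eqref{tauberian 1} and \eqref{tauberian 2}, does give $(N_1-N_2)*\psi(\lambda)=o_\psi(\lambda^\alpha)$ for every \emph{fixed} band-limited $\psi$, with uniformity over families sharing a fixed frequency window and bounded weighted-$L^1$ seminorms.

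The gap is in Step 2, exactly at the point you flag as the ``delicate heart,'' and it is not a matter of delicacy: the two requirements are incompatible as you have set them up. The tails $R_1,R_2$ are of size $\eta_W\lambda^p$, where $\eta_W=\int_{|s|>W}\rho_0$ and $p$ is the polynomial growth order of $N_1,N_2$ (in the intended application $p=\alpha+1>\alpha$). Making these $o(\lambda^\alpha)$ forces $W(\lambda)\to\infty$ at a definite rate dictated only by $p-\alpha$ and $\rho_0$: a small power of $\lambda$ for a generic Schwartz $\rho_0$, and in any case at least on the order of $\log\lambda$, since a nonzero function with compactly supported Fourier transform cannot decay exponentially (logarithmic-integral obstruction). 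Meanwhile $\epsilon(\lambda)=2W(\lambda)/K(\lambda)\to0$ forces $K(\lambda)\gg W(\lambda)$. But your only access to $(N_1-N_2)*\rho_{0,K(\lambda)}$ is through hypotheses stated for fixed test functions, whose $o(\lambda^\alpha)$ rates may deteriorate arbitrarily as the frequency support grows; a diagonal argument therefore yields \emph{some} $K(\lambda)\to\infty$ but with no guaranteed lower growth rate. The dyadic decomposition does not repair this: the $2^{-j}$ gain per annulus multiplies an error whose dependence on $j$ is completely uncontrolled---nothing in \eqref{tauberian 2} prevents the error for test functions at frequency $\sim 2^j$ from remaining as large as $\lambda^\alpha/\log\log\lambda$ until $\lambda\ge\exp(\exp(2^j))$, which caps the admissible $K(\lambda)$ at triple-logarithmic size, far below the $W(\lambda)$ needed to kill the tails. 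So the sandwich cannot be closed as proposed. Any correct bridge from the mollified statements to the shifted pointwise comparison must avoid tail terms of size $(\text{fixed small constant})\times\lambda^p$ altogether, e.g.\ by exploiting monotonicity to control the local increments of $N_1,N_2$ at a fixed mollification scale and producing, for each fixed $\delta>0$, a comparison with shift $\delta$ and error $o_\delta(\lambda^\alpha)$ that is only afterwards diagonalized in $\delta$; this is what the cited argument of Safarov--Vassiliev supplies and what your proposal leaves unproved.
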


First, note that Proposition \ref{Q nice} imply that $Q$ is bounded and satisfies $Q(\lambda) + c \lambda$ is monotone-increasing for all $\lambda > 0$ for some $c > 0$.
To obtain the asymptotics of Theorem \ref{Q asymptotics}, we compare two monotone-increasing functions
\[
N_1(\lambda) = N(\lambda) + c\mathbf 1_{[0,\infty)}(\lambda) \lambda^{n-d}
\]
and
\[
N_2(\lambda) = \mathbf 1_{[0,\infty)}(\lambda)( C_{H,M} \lambda^{n-d} + \lambda^{n-d-1}(Q(\lambda) + c\lambda) + C),
\]
where here $\mathbf 1_{[0,\infty)}(\lambda)$ denotes the Heaviside function, and where the constant $C$ is as in Proposition \ref{big singularity prop}.

We now list some facts that we will need in order to apply Theorem \ref{tauberian}. They are quite routine to verify, so they are largely left to the reader. In what follows, we use
\[
p_k(\lambda) = \lambda^k
\]
to denote the degree-$k$ power function on $\R$. If $\rho$ in Theorem \ref{tauberian} is also even, then one quickly verifies
\begin{equation}\label{tauberian ingredient 1}
	(\mathbf 1_{[0,\infty)} p_{n-d}) * \rho(\lambda) = \lambda^{n-d} + O(\lambda^{n-d-2}) \qquad \lambda \geq 0.
\end{equation}
(To see why, remove the Heaviside function at the expense of a rapidly decaying error and write the Taylor expansion of $p_k$ to two terms about $\lambda$.) We also have
\begin{align*}
	(p_{n-d-1} Q) * \rho(\lambda) &= \int_\R \tau^{n-d-1} Q(\tau) \rho(\lambda - \tau) \, d\tau \\
	&= \frac{1}{2\pi} \int_\R  \widehat Q(t) \left(\frac{1}{-i} \frac{d}{dt}\right)^{n-d-1}{\big( e^{it\lambda}\widehat \rho(t) \big)}\, dt \\
	&= 0,
\end{align*}
where the second line follows from the Fourier inversion formula and the last line can be forced by taking the Fourier support of $\rho$ to be small and disjoint from that of $Q$. Hence,
\begin{equation}\label{tauberian ingredient 2}
	(\mathbf 1_{[0,\infty)} p_{n-d-1} Q) * \rho(\lambda) = (p_{n-d-1} Q) * \rho(\lambda) + O(\lambda^{-\infty}) = O(\lambda^{-\infty}) \qquad \lambda \geq 0.
\end{equation}
Similarly for $\chi$, we have
\begin{equation}\label{tauberian ingredient 3}
	(\mathbf 1_{[0,\infty)} p_{n-d-1}) * \chi(\lambda) = p_{n-d-1} * \chi(\lambda) + O(\lambda^{-\infty}) = O(\lambda^{-\infty}) \qquad \lambda \geq 0.
\end{equation}
Finally, we have
\begin{equation}\label{tauberian ingredient 4}
	(\mathbf 1_{[0,\infty)} p_{n-d-1} Q') * \chi(\lambda) = \lambda^{n-d-1} Q' * \rho(\lambda) + O(\lambda^{n-d-2}) \qquad \lambda \geq 0.
\end{equation}
To see this, we remove the Heaviside function at the expense of a rapidly decaying error and write
\begin{multline*}
	(p_{n-d-1} Q') * \chi(\lambda) \\
	= \lambda^{n-d-1} Q' * \chi(\lambda) - \int_\R Q'(\tau) (\lambda^{n-d-1} - \tau^{n-d-1}) \chi(\lambda - \tau) \, d\tau.
\end{multline*}
Then, use integration by parts and the boundedness of $Q$ to show the integral on the right is $O(\lambda^{n-d-2})$.

With our facts assembled, \eqref{tauberian 1} follows from Proposition \ref{big singularity prop}, \eqref{tauberian ingredient 1}, and \eqref{tauberian ingredient 2}, while \eqref{tauberian 2} follows from Proposition \ref{off zero singularities}, \eqref{tauberian ingredient 3}, and \eqref{tauberian ingredient 4}. The conclusion of Theorem \ref{tauberian} then yields
\[
N(\lambda) \sim C_{H,M} \lambda^{n-d} + \lambda^{n-d-1}Q(\lambda) + o(\lambda^{n-d-1}) + C
\]
after subtracting out the $c \lambda^{n-d}$ term.


\section{Proof of Theorem \ref{Q continuous}}

That `$\sim$' can be replaced with `$=$' in the asymptotics of $N$ if $Q$ is uniformly continuous is straightforward. So, we must prove:

\begin{proposition}\label{Q continuous prop}
	Let $Q$ be given as in Theorem \ref{Q asymptotics}. Then, $Q$ is uniformly continuous if 
	\[
	\lim_{T \to \infty} \frac{1}{T} \sum_{t\in\mathcal T\cap[-T,T]} |q(t)| = 0.
	\]
\end{proposition}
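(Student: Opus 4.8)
The plan is to show that $Q$ is the uniform limit of the trigonometric polynomials obtained by truncating the sum \eqref{Q def}, and that the averaging condition forces the tails of these truncations to be small uniformly in $\lambda$. Concretely, for $T > 0$ set
\[
Q_T(\lambda) = \sum_{t \in \mathcal T \cap [-T,T]} e^{-it\lambda} \frac{q(t)}{-it}.
\]
Each $Q_T$ is a finite trigonometric polynomial, hence uniformly continuous, so it suffices to prove $Q_T \to Q$ uniformly on $\R$. The naive bound $\sum_{|t| > T} |q(t)|/|t|$ need not converge, so the first job is to get a better handle on the coefficients $q(t)$ by exploiting that they come from the analytic operator family $U_z$ of Section 2. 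The key structural fact, already extracted in the proof of Proposition \ref{Q nice}, is that $\sum_{t \in \mathcal T \cap (0,\infty)} e^{-itz} q(t) = (2\pi)^{-n+d}\sum_{k\ge 1}\langle U_z^k \mathbf 1, \mathbf 1\rangle$ with $\|U_z\| \le 1$ for $\Im z = 0$; this says the measure $Q' + (2\pi)^{-n+d}|SN^*H|$ is a positive, finite measure on each period $[0,2\pi)$ (and, being a sum of point masses with a uniformly bounded total mass on bounded intervals, is controlled).

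The heart of the argument is a quantitative Wiener-type / Fejér averaging estimate. I would regard $Q'(\lambda) = \sum_{t \in \mathcal T} e^{-it\lambda} q(t)$ as a distribution which, by Proposition \ref{Q nice}, differs from a positive measure by a constant, and then use a Fejér kernel $K_T$ on the frequency side. The Cesàro means $\sigma_T Q'(\lambda) = \sum_{t \in \mathcal T\cap[-T,T]}(1 - |t|/T) e^{-it\lambda} q(t)$ are then uniformly close to $Q'$ in the relevant sense precisely because of Wiener's theorem for measures: the averaged square (or absolute) mass of the atoms beyond scale $T$ tends to zero iff there are no atoms escaping to large total mass, and the hypothesis $\frac1T\sum_{|t|\le T}|q(t)| \to 0$ is exactly the statement that the Cesàro-averaged total variation of the atomic part vanishes, which (after one integration, passing from $Q'$ to $Q$, where the extra $1/(-it)$ factor only helps) yields $\sup_\lambda |Q(\lambda) - Q_T(\lambda)| \to 0$. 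Equivalently: write $Q - Q_T$ as the antiderivative of $\sum_{|t|>T} e^{-it\lambda}q(t)$; split this tail into dyadic blocks $T_j \sim 2^j T$, bound the contribution of block $j$ by $\frac{1}{T_j}\sum_{|t| \le T_j}|q(t)|$ using summation by parts against the $1/(-it)$ weight, and sum the resulting geometric-type series, which is $o(1)$ as $T \to \infty$ by \eqref{averaging condition}.

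The main obstacle is making the summation-by-parts step honest: the times in $\mathcal T$ are an arbitrary countable set with no lower bound on gaps a priori, so I cannot treat $\sum e^{-it\lambda}q(t)/(-it)$ as an ordinary Fourier series. Two inputs fix this. First, the first-return time $\mathbf T$ is bounded below away from $0$ (used already for $\|U_z\| < 1$ when $\Im z < 0$), which via Lemma \ref{group-ish} gives that the atoms of $|q|$ on any interval of length $1$ have uniformly bounded total mass — so partial sums $\sum_{|t| \le R}|q(t)|$ grow at most linearly and Abel summation against the monotone weight $1/|t|$ is legitimate. Second, one must check the interchange of the $\lambda$-sup with the limit is uniform, which is where one genuinely needs \eqref{averaging condition} rather than mere boundedness of $Q$. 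Once the dyadic tail estimate is in place, uniform continuity of $Q$ is immediate: given $\varepsilon$, pick $T$ with $\sup_\lambda|Q - Q_T| < \varepsilon/3$, then use uniform continuity of the trigonometric polynomial $Q_T$ to handle the middle term. I expect the remaining details — verifying the uniformly-bounded-mass-per-unit-interval claim from Lemma \ref{group-ish} and the bound $\|U_z\|\le 1$, and the elementary Abel summation — to be routine and will only sketch them.
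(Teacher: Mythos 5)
There is a genuine gap in your proposal. You are trying to prove something strictly stronger than uniform continuity, namely uniform convergence of the truncations $Q_T \to Q$, and the evidence is that this fails in general. The dyadic/Abel summation step does not close: the averaging hypothesis gives $\sum_{|t| \le R} |q(t)| = R\,\epsilon(R)$ with $\epsilon(R) \to 0$, so the contribution of the dyadic block $[2^j T, 2^{j+1}T]$ to $\sum_{|t|>T}|q(t)|/|t|$ is of size $\epsilon(2^{j+1}T)$, and $\sum_j \epsilon(2^j T)$ diverges whenever $\epsilon$ decays slowly (e.g.\ $\epsilon(R) \sim 1/\log R$). The extra $1/(-it)$ weight is not enough to make the tail summable, and the "uniformly bounded mass per unit interval" observation only gives at-most-linear growth of $\sum_{|t|\le R}|q(t)|$, which is precisely the regime where the dyadic sum can fail to converge. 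Nothing in the hypothesis rules this out, so the claimed tail estimate $\sup_\lambda |Q - Q_T| \to 0$ is unsupported. (Relatedly, the Fej\'er/Ces\`aro device runs into the same issue: the piece $\frac{1}{T}\sum_{|t|\le T}|q(t)|$ does go to zero, but the untruncated tail $\sum_{|t|>T}$ is left unestimated.)

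The paper avoids this entirely by never estimating the tail of the series. The key idea you are missing is to use the monotonicity established in Proposition~\ref{Q nice}. Set $F(\lambda) = c\lambda + Q(\lambda)$ with $F' \ge 0$. Choose a nonnegative Schwartz $\rho$ with $\widehat\rho \ge 0$ supported in $[-1,1]$ and $\rho \ge 1$ on $[-1,1]$. Then positivity of $F'$ gives
\[
F(\lambda+\epsilon) - F(\lambda-\epsilon) \;\le\; \int F'(\lambda+\tau)\,\rho(\tau/\epsilon)\,d\tau \;=\; \epsilon\,F'*\rho_\epsilon(\lambda),
\]
where $\rho_\epsilon(\tau) = \epsilon^{-1}\rho(\tau/\epsilon)$. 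Since $\widehat{\rho_\epsilon}(t) = \widehat\rho(\epsilon t)$ vanishes for $|t| > \epsilon^{-1}$, expanding $F' = c + \sum_t e^{-it\lambda}q(t)$ truncates the sum automatically and yields
\[
F(\lambda+\epsilon) - F(\lambda-\epsilon) \;\le\; C\epsilon\Bigl(1 + \sum_{0<|t|\le \epsilon^{-1}}|q(t)|\Bigr),
\]
which goes to $0$ uniformly in $\lambda$ as $\epsilon \to 0$ by \eqref{averaging condition}. This bounds the modulus of continuity of $F$ (hence of $Q$) directly with only a finite frequency window, sidestepping the non-convergent tail your approach depends on. You should replace the uniform-convergence scheme with this mollification-plus-positivity argument.
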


This proposition is based on the arguments in \cite{SZRev}, who prove it for $q$ supported on the integers. Similar arguments works in general.

\begin{proof} Suppose the limit in the proposition holds. Let $c > 0$ be a positive constant such that $F(\lambda) = c\lambda + Q(\lambda)$ is monotone increasing (see Proposition \ref{Q nice}). Then, it suffices to show that $F$ is uniformly continuous. Let $\rho$ be an even, nonnegative Schwartz-class function $\R$ for which $\widehat \rho$ is nonnegative and supported in $[-1,1]$. In particular, we will assume $\rho \geq 1$ on $[-1,1]$. Then,
\[
F(\lambda + \epsilon) - F(\lambda - \epsilon) \leq \int_{-\epsilon}^\epsilon F'(\lambda + \tau) \, d\tau \leq \int 
F'(\lambda + \tau) \rho(\tau/\epsilon) \, d\tau = \epsilon F' * \rho_\epsilon(\lambda),
\]
where in the last line we set $\rho_\epsilon(\tau) = \epsilon^{-1} \rho(\tau/\epsilon)$. Now, using the definition of $Q$ before Theorem \ref{Q asymptotics}, we write
\[
\epsilon F' * \rho_\epsilon(\lambda) = c\epsilon \int \rho(t) \, dt +
\epsilon \sum_{t\in\mathcal T} e^{-it\lambda} q(t) \widehat \rho(\epsilon t).
\]
Hence, we have for some $C > 0$,
\[
F(\lambda + \epsilon) - F(\lambda - \epsilon) \leq C \epsilon \left( 1 + \sum_{0 < |t| \leq \epsilon^{-1}} |q(t)| \right),
\]
which vanishes as $\epsilon \to 0$ uniformly in $\lambda$ by hypothesis.
\end{proof}


\section{Proof of Proposition \ref{big singularity prop}: Analysis of the big singularity at zero} \label{SINGULARITY AT ZERO}

\subsection{Stationary Phase for Real Oscillatory Integrals}

We state and prove a simple stationary phase lemma for later application to certain real valued oscillatory integrals with vanishing sub-principle symbols.

Let $a(x,y,\lambda)$ be a symbol of order $m$ with compact support in $(x,y)\in\R^n\times\R^e$ such that  
\[
a(x,y,\lambda) = a_0(x,y) \lambda^m + a_1(x,y) \lambda^{m-1} + a_2(x,y,\lambda),
\]
where $a_2(x,y,\lambda) \in S^{m-2}$, and $a_0(x,y)$ is real-valued.
Let $\phi$ be smooth on $\supp a$ such that $\phi(0,0) = 0$, $d_x \phi(0,0) = 0$, and $\det d_x^2 \phi(0,0) \neq 0$. By the implicit function theorem, there exists a smooth function $y \mapsto x(y)$ so that $d_x \phi(x(y),y) = 0$.

\begin{lemma}\label{subprincipal lemma} Consider the oscillatory integral
\[
	I(y,\lambda) = \int_{\R^n} e^{i\lambda \phi(x,y)} a(x,y,\lambda) \, dx
\]
with $a$ and $\phi$ as above. Then,
\begin{multline}
	I(y,\lambda) = (2\pi)^{\frac{n}{2}} \lambda^{m- \frac{n}{2}} e^{\frac{\pi i \sigma}{4}} e^{i\lambda \phi(x(y),y)}|\det d_x^2 \phi(x(y),y)|^{-1/2}\\
	\cdot(a_0(x(y),y) + \lambda^{-1} \Re a_1(x(y),y)+\lambda^{-1}r(y))  + O(\lambda^{m-\frac{n}{2}-2}),
\end{multline}
where $\Re r(y)=0.$
\end{lemma}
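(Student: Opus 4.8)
The plan is to run the standard stationary phase expansion but to track the subprincipal ($\lambda^{-1}$-order) coefficient carefully, isolating its real part. First I would perform a change of variables $x \mapsto x + x(y)$ so that the critical point sits at the origin for each fixed $y$, and then apply the Morse lemma with parameters: there is a smooth family of diffeomorphisms $x = \kappa_y(u)$ with $\kappa_y(0) = 0$ taking $\phi(x,y) - \phi(x(y),y)$ to the nondegenerate quadratic form $\tfrac12 \langle A_y u, u\rangle$, where $A_y = d_x^2\phi(x(y),y)$ (after diagonalizing, a signature-$\sigma$ form). This rewrites the integral as $e^{i\lambda\phi(x(y),y)}\int e^{i\lambda \langle A_y u,u\rangle/2}\, b(u,y,\lambda)\,du$ with $b(u,y,\lambda) = a(\kappa_y(u),y,\lambda)\,|\det D\kappa_y(u)|$. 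Crucially, since $a_0$ is real-valued and the Jacobian factor and $\kappa_y$ are real, the leading symbol $b_0(u,y) = a_0(\kappa_y(u),y)|\det D\kappa_y(u)|$ is real-valued, while $b_1(u,y) = a_1(\kappa_y(u),y)|\det D\kappa_y(u)|$ has $\Re b_1 = (\Re a_1)(\kappa_y(u),y)|\det D\kappa_y(u)|$.

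Next I would invoke the exact stationary phase formula for a Gaussian (e.g. Hörmander, or \cite[App.~B]{Saf}): for a symbol $b$,
\[
\int_{\R^n} e^{i\lambda\langle A u,u\rangle/2}\, b(u,\lambda)\, du = (2\pi/\lambda)^{n/2} e^{\pi i \sigma/4} |\det A|^{-1/2} \sum_{k\geq 0} \lambda^{-k} L_k b(0,\lambda),
\]
where $L_0 b = b$ and $L_k$ is the differential operator $\tfrac{1}{k!}\big(\tfrac{1}{2i}\langle A^{-1} D_u, D_u\rangle\big)^k$ evaluated at $u = 0$. Collecting powers of $\lambda$: the $\lambda^{m-n/2}$ term is $b_0(0,y) = a_0(x(y),y)$; the $\lambda^{m-n/2-1}$ term is $b_1(0,y) + L_1 b_0(0,y) = a_1(x(y),y) + \tfrac{1}{2i}\langle A_y^{-1} D_u,D_u\rangle b_0(0,y)$. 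Here is the key observation: the operator $L_1 = \tfrac{1}{2i}\langle A_y^{-1} D, D\rangle$ applied to the \emph{real} function $b_0$ produces a \emph{purely imaginary} number, so $\Re(b_1 + L_1 b_0)(0,y) = \Re a_1(x(y),y)$, and the imaginary part $\Im b_1(0,y) + L_1 b_0(0,y)$ is exactly the quantity I would name $r(y)$, which by construction satisfies $\Re r(y) = 0$. All terms with $k \geq 2$, together with the $S^{m-2}$ remainder $a_2$, contribute $O(\lambda^{m-n/2-2})$; here I would use that $a$ is a genuine symbol of order $m$ with compact support so the asymptotic expansion has the claimed uniform remainder.

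The main obstacle — though it is more bookkeeping than conceptual difficulty — is verifying that the Morse-lemma change of variables genuinely preserves the reality structure and that $L_1$ maps real functions to purely imaginary values; the latter is immediate since $L_1$ has a single factor of $1/i$ and a real second-order differential operator $\langle A_y^{-1}D,D\rangle$, but one must be careful that the signature normalization of $A_y$ and the Maslov phase $e^{\pi i \sigma/4}$ are pulled out \emph{before} this reality argument, so that the remaining amplitude factor is honestly $a_0 + \lambda^{-1}\Re a_1 + \lambda^{-1}r$ with $\Re r = 0$. A secondary point requiring care is uniformity in $y$: since $\phi$ is smooth on $\supp a$ with $\det d_x^2\phi(x(y),y) \neq 0$ there, all constants in the Morse lemma and in the remainder estimate can be taken uniform over the (compact) $y$-support, which is what licenses the stated error term.
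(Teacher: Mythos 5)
Your proposal is correct and uses the same key idea as the paper: the subprincipal term in the stationary phase expansion arises from a differential operator with a single factor of $1/i$ applied to the real-valued leading symbol, hence is purely imaginary and can be absorbed into $r(y)$ without computation. The paper reaches this slightly more directly by citing H\"ormander's Theorem~7.7.6 (stated here as Lemma~\ref{H lemma}), whose operator $L_y$ is manifestly $i^{-1}$ times a real second-order operator applied to the real quantities $a_0$, $g_y a_0$, $g_y^2 a_0$; your Morse-lemma reduction to a pure Gaussian phase re-derives that expansion rather than invoking it, but the reality argument and the definition $r(y)=i\Im a_1(x(y),y)+L a_0(x(y),y)$ are the same in substance.
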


Lemma \ref{subprincipal lemma} follows from a standard stationary phase result in \cite{HI} which we state below.
\begin{lemma}[Theorem 7.7.6 of \cite{HI}]\label{H lemma}
	Let $a(x,y)$ be smooth and compactly supported on $\R^n \times \R^e$, and let $\phi$ be smooth on $\supp a$ such that $\phi(0,0) = 0$, $d_x \phi(0,0) = 0$, and $\det d_x^2 \phi(0,0) \neq 0$. By the implicit function theorem, there exists a smooth function $y \mapsto x(y)$ so that $d_x \phi(x(y),y) = 0$. Consider the oscillatory integral indexed by
	\[
		I(y,\lambda) = \int_{\R^n} e^{i\lambda \phi(x,y)} a(x,y) \, dx.
	\]
	Then,
	\begin{multline*}
		I(y,\lambda) = (2\pi)^{\frac{n}{2}} \lambda^{-\frac{n}{2}} e^{\frac{\pi i \sigma}{4}} e^{i\lambda \phi(x(y),y)} |\det d_x^2 \phi(x(y),y)|^{-1/2}\\
		 \cdot \Big( a(x(y),y) + \lambda^{-1} L_y a(x(y),y) + O(\lambda^{-2})  \Big)
	\end{multline*}
	where
	\begin{multline*}
		L_y a = i^{-1} \Big( \frac12 \langle d_x^2 \phi(x(y),y) D_x, D_x \rangle a(x(y),y) \\
		+ \frac{1}{2^{2} 2!} \langle d_x^2 \phi(x(y),y) D_x, D_x \rangle^2 (g_y a)(x(y),y) \\
		+ \frac{1}{2^{3} 2! 3!} \langle d_x^2 \phi(x(y),y) D_x, D_x \rangle^3 (g_y^2 a)(x(y),y) \Big),
	\end{multline*}
	where
	\[
		g_y(x) = \phi(x,y) - \phi(x(y),y) - \frac12 \langle d_x^2 \phi(x(y),y) (x - y), x - y \rangle.
	\]
\end{lemma}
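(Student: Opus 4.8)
\emph{Proof plan.} The statement is the classical method of stationary phase with explicit subleading terms, quoted from \cite{HI}; here is the route I would take, which reduces everything to the exact asymptotics of a Gaussian oscillatory integral. First comes a \emph{localization step}. Since $d_x\phi$ vanishes only at the critical set and $\det d_x^2\phi(x(y),y)\neq 0$ forces $x(y)$ to be the unique critical point in a fixed small neighbourhood (uniformly for $y$ ranging over a compact set), I write $a = \chi a + (1-\chi)a$ with $\chi$ a smooth cutoff supported where $|d_x\phi|$ is small, chosen so that $|d_x\phi|\geq c>0$ on $\supp\big((1-\chi)a\big)$. Repeated integration by parts against $L = (\lambda|d_x\phi|^2)^{-1}\langle d_x\phi, D_x\rangle$, which satisfies $L e^{i\lambda\phi}=e^{i\lambda\phi}$, shows the $(1-\chi)a$ piece is $O(\lambda^{-\infty})$ with constants uniform in $y$. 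Hence I may assume $\supp a$ lies in an arbitrarily small neighbourhood of $x(y)$.

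Next, \emph{reduction to a quadratic phase}. Translating $x\mapsto x+x(y)$ smoothly in $y$ puts the critical point at the origin, and Taylor's formula gives
\[
\phi(x,y) = \phi(x(y),y) + \tfrac12\langle A(y)x, x\rangle + g_y(x),\qquad A(y) := d_x^2\phi(x(y),y),
\]
with $g_y$ vanishing to third order at $x=0$ and smooth in $y$ --- exactly the $g_y$ of the statement. Factoring out $e^{i\lambda\phi(x(y),y)}$ leaves $\int e^{i\lambda\langle A(y)x,x\rangle/2}\, e^{i\lambda g_y(x)}\, a(x+x(y),y)\,dx$. The engine is the Gaussian lemma (\cite[Thm.~7.7.3]{HI}): for $u\in C_c^\infty(\R^n)$ and $A$ real symmetric nondegenerate,
\[
\int_{\R^n} e^{i\lambda\langle Ax,x\rangle/2} u(x)\,dx = (2\pi)^{\frac n2}\lambda^{-\frac n2} e^{\frac{\pi i\sgn A}{4}}|\det A|^{-\frac12}\sum_{k<N}\frac{(2i\lambda)^{-k}}{k!}\langle A^{-1}D_x,D_x\rangle^k u(0) + R_N,
\]
where $R_N$ is bounded by $C\lambda^{-\frac n2 - N}$ times a fixed Schwartz seminorm of $u$. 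I would prove this by diagonalising $A$ via an orthogonal change of variables and rescaling each coordinate by the square root of the modulus of the corresponding eigenvalue (this yields $|\det A|^{-1/2}$), reducing to $A = \mathrm{diag}(\pm 1,\dots,\pm 1)$; the one-dimensional Fresnel identity $\int e^{\pm i\lambda t^2/2 - it\xi}\,dt = (2\pi/\lambda)^{1/2} e^{\pm i\pi/4} e^{\mp i\xi^2/(2\lambda)}$ (which produces the signature phase $e^{\pi i\sgn A/4}$), together with Fourier inversion in $u$ and the Taylor expansion of $e^{\mp i\xi^2/(2\lambda)}$ in powers of $\lambda^{-1}$ with integral remainder, gives the stated series and bound.

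To extract the explicit $L_j$ --- in particular the three-term operator $L_y$ --- I would apply the Gaussian lemma to $u(x) = e^{i\lambda g_y(x)}a(x+x(y),y)$ after first expanding $e^{i\lambda g_y(x)} = \sum_{\mu<M}\frac{(i\lambda)^\mu}{\mu!}g_y(x)^\mu + O\big((\lambda g_y)^M\big)$. Since $\langle A^{-1}D_x,D_x\rangle^\nu$ evaluated at the origin kills any function vanishing there to order $>2\nu$, and $g_y^\mu$ vanishes to order $3\mu$, the term carrying $(i\lambda)^\mu g_y^\mu/\mu!$ contributes to the $\lambda^{-\nu}$ piece of the Gaussian series only when $2\nu\geq 3\mu$, with net power $\lambda^{\mu-\nu}$. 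Summing over all $(\mu,\nu)$ with $\nu-\mu=j$ and $2\nu\geq 3\mu$ gives $L_j u = \sum i^{-j}2^{-\nu}(\mu!\,\nu!)^{-1}\langle A^{-1}D_x,D_x\rangle^\nu(g_y^\mu u)(0)$; for $j=0$ only $(\mu,\nu)=(0,0)$ survives, giving the leading $a(x(y),y)$, while for $j=1$ exactly $(\mu,\nu)\in\{(0,1),(1,2),(2,3)\}$ survive, which after translating the origin back to $x(y)$ is precisely the displayed $L_y a$. Choosing $M$ and $N$ large, the tail of the $e^{i\lambda g_y}$ expansion and the remainder $R_N$ are both $O(\lambda^{-n/2-2})$, uniformly in $y$ since $g_y$, $a$, $x(y)$ and $A(y)^{-1}$ have uniformly bounded derivatives on compact $y$-sets.

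The main obstacle is the combinatorial bookkeeping of this last step: one must check that the double series over $(\mu,\nu)$ reorganises into the operators $L_j$ with exactly the stated coefficients and no lost cross terms, and that all the error estimates are genuinely uniform in the parameter $y$. A secondary but real point is the signature phase: one checks $\sgn d_x^2\phi(x(y),y)$ is locally constant in $y$ --- it is, since $A(y)$ stays nondegenerate and varies continuously --- so that $e^{\pi i\sigma/4}$ is unambiguous, matching the locally constant factor in the statement.
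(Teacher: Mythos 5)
Your proposal is correct: it is the standard proof of Hörmander's Theorem 7.7.6 (localization by non-stationary phase, reduction to a quadratic phase, the exact Gaussian/Fresnel expansion of Theorem 7.7.3, and the $(\mu,\nu)$ bookkeeping with $\nu-\mu=j$, $2\nu\ge 3\mu$ that yields exactly the three terms of $L_y$ for $j=1$). The paper does not prove this lemma at all --- it quotes it verbatim from \cite{HI} --- so there is nothing to compare beyond noting that your argument reconstructs the proof in the cited reference.
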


Note that if $a$ is real-valued, $L_y a$ is imaginary. We will exploit this to avoid having to compute any of the terms of $L_y a$. We are ready to proceed with the proof of Lemma \ref{subprincipal lemma}.

\begin{proof}[Proof of Lemma \ref{subprincipal lemma}] 
Applying Lemma \ref{H lemma} to $I(y,\lambda)$, we see that
\begin{multline}
	I(y,\lambda) = (2\pi)^{\frac{n}{2}} \lambda^{m- \frac{n}{2}} e^{\frac{\pi i \sigma}{4}} e^{i\lambda \phi(x(y),y)}|\det d_x^2 \phi(x(y),y)|^{-1/2}\\
	\cdot(a_0(x(y),y) + \lambda^{-1}( a_1(x(y),y) +L_y(a_0(x(y),y))))  + O(\lambda^{m-\frac{n}{2}-2}),
\end{multline}
Since $L_ya_0$ is imaginary, the lemma follows if we take $r(y)=i\Im a_1(x(y),y) +L_y(a_0(x(y),y))$.
\end{proof}

\subsection{Proof of Proposition \ref{big singularity prop}}

We first note that
\[
	N'*\rho(-\lambda) = \sum_j \rho(\lambda_j + \lambda) \left|\int_H e_j \, dV_H \right|^2 = O(\lambda^{-\infty})
\]
since $\rho$ is Schwartz-class and the periods $\left|\int_H e_j \, dV_H \right|^2$ are tempered by standard bounds \eqref{N def}. Hence,
\begin{align*}
	N' * \rho(\lambda) &= N' * \rho(\lambda) + N'*\rho(-\lambda) + O(\lambda^{-\infty}) \\
	&= \sum_j ( \rho(\lambda_j - \lambda) + \rho(\lambda_j + \lambda) ) \left|\int_H e_j \, dV_H \right|^2 + O(\lambda^{-\infty}).
\end{align*}
By Fourier inversion,
\begin{align*}
	\rho(\lambda_j - \lambda) + \rho(\lambda_j + \lambda) &= \frac{1}{2\pi} \int_\R ( e^{-it(\lambda - \lambda_j)} + e^{-it(\lambda + \lambda_j)}) \widehat \rho(t) \, dt \\
	&= \frac{1}{\pi} \int_\R e^{-it\lambda} \cos(t\lambda_j) \widehat \rho(t) \, dt.
\end{align*}
Hence, modulo rapidly-decaying terms,
\begin{align}
	\nonumber N' * \rho(\lambda) &= \sum_j \frac1\pi \int_\R e^{-it\lambda} \cos(t\lambda_j) \widehat \rho(t) \left|\int_H e_j \, dV_H \right|^2 \, dt\\
	\nonumber &= \frac1\pi \int_\R \int_H \int_H e^{-it\lambda}\widehat \rho(t) \sum_j \cos(t\lambda_j) e_j(x) \overline{e_j(y)} \, dV_H(x) \, dV_H(y) \, dt \\
	\label{pre-parametrix} &= \frac1\pi \int_\R \int_H \int_H e^{-it\lambda}\widehat \rho(t) \cos(tP)(x,y) \, dV_H(x) \, dV_H(y) \, dt.
\end{align}
Here, $\cos(tP)$ denotes the the cosine wave operator, which gives the solution $u(t) = \cos(tP)f$ to the wave equation
\[
	(\partial_t^2 - \Delta_g)u = 0
\]
with initial conditions
\begin{align*}
	u(0) &= f \\
	\partial_t u(0) &= 0.
\end{align*}
We may write the distribution kernel of $\cos(tP)$ in local oscillatory form using the Hadamard parametrix. Let $x$ and $y$ be expressed in local coordinates, and let $\xi$ be an element in $\R^n$ such that $(y,\xi)$ form canonical local coordinates of $T^*M$. By \cite[equation (5.2.16)]{Hang}, for $|t|$ less than the injectivity radius of $M$, we write
\begin{multline} \label{hadamard parametrix}
	\cos(tP)(x,y) = (2\pi)^{-n} \int_{\R^n} e^{i\varphi(x,y,\xi)} \cos(tp(y,\xi)) \alpha_0(x,y) \, d\xi \\
	+ \sum_\pm \int_{\R^n} e^{i\varphi(x,y,\xi) \pm i tp(y,\xi)} c_\pm(t,x,y,\xi) \, d\xi + R(t,x,y)
\end{multline}
where $\alpha_0$ is smooth and $\alpha_0(x,x) = 1$ for all $x$, where $\varphi$ is smooth, positive-homogeneous of degree $1$ in the $\xi$ variable, and
\begin{equation} \label{varphi bound}
	\varphi(x,y,\xi) = \langle x - y, \xi \rangle + O(|x - y|^2 |\xi|).
\end{equation}
The terms $c_\pm$ in the second term belong to symbol class $S^{-2}$, meaning
\begin{equation}\label{symbol bounds c}
	|\partial_{t,x,y}^\alpha \partial_\xi^\beta c_\pm(t,x,y,\xi)| \leq C_{\alpha,\beta} ( 1 + |\xi|)^{-2 - |\beta|}
\end{equation}
for all multiindices $\alpha$ and $\beta$. The remainder $R(t,x,y)$ can be taken to be in class $C^N$ for a finite but arbitrarily large number $N$. Since $t$ will be confined to the support of $\widehat \rho$, say so that $|t| < \delta$, we may assume each of $\alpha_0$, $c_\pm$, and $R$ are supported for $d_g(x,y) \leq \delta$. After all, for $|t| < \delta$, $\cos(tP)$ is supported on $d_g(x,y) \leq |t| < \delta$ by H\"uygen's principle, and so we may multiply both sides by a smooth cutoff in $(x,y)$ which takes the value $1$ for $d_g(x,y) \leq \delta$, and $0$ for $d_g(x,y) \geq 2\delta$.

First, we deal with the contribution of the remainder term $R$ to \eqref{pre-parametrix}. Note, as a function of $t$,
\[
	\int_H \int_H \widehat \rho(t) R(t,x,y) \, dV_H(x) \, dV_H(y)
\]
may have as many continuous derivatives as we like. Hence, integrating by parts in $t$ yields a contribution of 
\[
	\frac1\pi \int_\R \int_H \int_H e^{-it\lambda}\widehat \rho(t) R(t,x,y) \, dV_H(x) \, dV_H(y) \, dt = O(\lambda^{-N})
\]
for a fixed, arbitrarily large $N$. Hence, the contribution of the remainder term is negligible, and we are left to estimate the contribution of the first two terms of \eqref{hadamard parametrix}. First, we consider the contribution of the `$-$' term of the sum in the second term, namely
\[
	(2\pi)^{-n-1} \int_{\R^n} \int_\R \int_H \int_H e^{i(\varphi(x,y,\xi) - tp(y,\xi) - t\lambda)} \widehat \rho(t) c_-(t,x,y,\xi) \, dV_H(x) \, dV_H(y) \, dt \, d\xi.
\]
After making a change of variables $\xi \mapsto \lambda \xi$, we write this contribution as
\[
	(2\pi)^{-n-1} \lambda^n \int_{\R^n} \int_\R \int_H \int_H e^{i\lambda (\varphi(x,y,\xi) - t(p(y,\xi) + 1))} \widehat \rho(t) c_-(t,x,y,\lambda \xi) \, dV_H(x) \, dV_H(y) \, dt \, d\xi.
\]
Integration by parts $N$ times in $t$, along with the symbol bounds \eqref{symbol bounds c}, yields (1) an amplitude which is $L^1$ (uniformly in $\lambda \geq 1$) if $N$ is at least $n-1$, and (2) an improvement of the power of $\lambda$ in front of the integral to $\lambda^{n - N}$. By taking $N$ plenty large, we see the contribution of this term is negligible. We also write the first term of \eqref{hadamard parametrix} as
\begin{multline*}
	(2\pi)^{-n} \int_{\R^n} e^{i\varphi(x,y,\xi)} \cos(tp(y,\xi)) \alpha_0(x,y) \, d\xi \\
	= (2\pi)^{-n} \frac{1}{2} \sum_\pm \int_{\R^n} e^{i\varphi(x,y,\xi) \pm itp(y,\xi)} \alpha_0(x,y) \, d\xi,
\end{multline*}
and similarly argue that the contribution of the `$-$' term is negligible. We have thus reduced the proof of Proposition \ref{big singularity prop} to the following claim:
\begin{multline*}
	(2\pi)^{-n-1} \int_{\R^n} \int_\R \int_H \int_H e^{i(\varphi(x,y,\xi) + tp(y,\xi) - t\lambda)} \\
	\times \widehat \rho(t) (\alpha_0(x,y) + 2c_+(t,x,y,\xi)) \, dV_H(x) \, dV_H(y) \, dt \, d\xi \\
	= (2\pi)^{-n+d} \vol(H) \vol(S^{n-d-1}) \lambda^{n - d - 1} + O(\lambda^{n - d - 3}).
\end{multline*}
By making a change of variables $\xi \mapsto \lambda \xi$, we write the left side as
\begin{multline*}
	(2\pi)^{-n-1} \lambda^n \int_{\R^n} \int_\R \int_H \int_H e^{i\lambda (\varphi(x,y,\xi) + t(p(y,\xi) - 1))} \\
	\times \widehat \rho(t) (\alpha_0(x,y) + 2c_+(t,x,y,\lambda \xi)) \, dV_H(x) \, dV_H(y) \, dt \, d\xi.
\end{multline*}
Let $\beta$ be a smooth function on $\R$ taking values in the interval $[0,1]$, where $\beta \equiv 1$ on a neighborhood of $1$ with support in $[1/2, 2]$. We then cut the integral into $\beta(p(y,\xi))$ and $1 - \beta(p(y,\xi))$ pieces. By a similar integration by parts argument as before, the latter piece contributes $O(\lambda^{-\infty})$ to the whole. Hence, we have reduced our claim to the following oscillatory integral asymptotics:
\begin{multline}\label{big singularity claim}
	(2\pi)^{-n-1} \int_{\R^n} \int_\R \int_H \int_H e^{i\lambda \psi(t,x,y,\xi)} a(t,x,y,\xi; \lambda) \, dV_H(x) \, dV_H(y) \, dt \, d\xi \\
	= (2\pi)^{-n+d} \vol(H) \vol(S^{n-d-1}) \lambda^{n - d - 1} + O(\lambda^{n - d - 3}),
\end{multline}
with phase function 
\[
	\psi(t,x,y,\xi) = \varphi(x,y,\xi) + t(p(y,\xi) - 1)
\]
and amplitude
\[
	a(t,x,y,\xi;\lambda) = \lambda^n \widehat \rho(t) \beta(p(y,\xi)) (\alpha_0(x,y) + 2c_+(t,x,y,\lambda \xi)).
\]

We are about ready to apply our stationary phase tool, Lemma \ref{subprincipal lemma}. To do so, we will set up some specific local coordinates. We fix $y \in H$ and fix geodesic normal coordinates about $y$ with respect to which the plane $\{(x',0) \in \R^n : x' \in \R^d \}$ is tangent to $H$ at $y$. We also express the covector $\xi$ locally as $\xi = (\xi', r\omega)$ where $\xi' \in \R^d$, $r > 0$, and $\omega \in S^{n-d-1}$. Since we have fixed geodesic normal coordinates about $y$, and since $\xi$ is a covector at $y$, we have $p(y,\xi) = |\xi|$. Finally, let $\Phi$ be smooth map from an open neighborhood of $\R^d$ into $\R^n$ so that $\Phi(x')$ parametrizes $H$ in our local coordinates, and assume further that $\Phi(0) = 0$ and $d\Phi(0) = [I \ 0]$.

With this setup along with \eqref{varphi bound}, the phase function $\psi$ reads
\[
	\langle x', \xi' \rangle + t(|\xi| - 1) + O(|x'|^2 |\xi|).
\]
Fixing $\omega$ and taking derivatives in $t, r, x',$ and $\xi'$ variables yields
\[
	\nabla_{t, r, x',\xi'} \psi = \begin{bmatrix}
		|\xi| - 1 \\
		tr/|\xi| + O(|x'|^2) \\
		\xi' + O(|x'||\xi|) \\
		x' + t\xi'/|\xi| + O(|x'|^2)
	\end{bmatrix}.
\]
Note this derivative vanishes at $(t,r,x',\xi') = (0,1,0,0)$. Furthermore, at such a critical point, we have Hessian matrix
\[
	\nabla^2_{t, r, x', \xi'} \psi = \begin{bmatrix}
		0 & 1 & 0 & 0 \\
		1 & 0 & 0 & 0 \\
		0 & 0 & * & I \\
		0 & 0 & I & 0
	\end{bmatrix},
\]
which is nonsingular. Hence, the condition of Lemma \ref{subprincipal lemma} is satisfied if the support of $\widehat \rho$ is sufficiently small and the amplitude is supported on a correspondingly small neighborhood of the diagonal $x = y$. On the other hand, the amplitude $a(t,x,y,\xi;\lambda)$ has principal part $(2\pi)^{-n-1} \lambda^n \widehat \rho(t) \beta(p(y,\xi)) \alpha_0(x,y)$, vanishing subprincipal part, and remainder $2 \lambda^n \widehat \rho(t) \beta(p(y,\xi)) c_+(t,x,y,\lambda\xi)$, which one quickly verifies is a symbol of order $n-2$ in $\lambda$. Now we apply our stationary phase Lemma \ref{subprincipal lemma} to $I(\lambda)$, we have 
\begin{align*}
	I(\lambda) &= (2\pi)^{-n+d} e^{\frac{\pi i \sigma}{4}}\int_H \int_{SN_y^*H}1\,d\omega\,dV_H(y) \lambda^{n - d - 1} + O(\lambda^{n-d-3})\\
	&=(2\pi)^{-n+d} {\rm vol}(H)\, {\rm vol}(S^{n-d-1})\, \lambda^{n - d - 1} + O(\lambda^{n-d-3}).
\end{align*}
Here we have used the fact that $I(\lambda)$ is real and nonnegative modulo lower order terms, so the second term vanishes and the Maslov factor $e^{\frac{\pi i \sigma}{4}}$ is $1$.


\section{Proof of Proposition \ref{off zero singularities}: Analysis of the highest order singularities off zero} \label{SINGULARITIES OFF ZERO}

\subsection{The reduction}

We now study the highest-order singularities of $\widehat{N'}$ which lie away from the origin. For this, we prepare our problem for the global theory of Fourier integral operators.

Let $\delta_H$ denote the half-density distribution on $M$ given by the action
\[
	f |dV_M|^{1/2} \mapsto \int_H f \, dV_H,
\]
where here $dV_M(x) = |g|^{1/2} \, dx$ denotes the Riemannian volume element on $M$ and $dV_H(x') = |g_H(x')|^\frac12$ denotes the induced Riemannian volume element on $H$. Furthermore, let $U(t,x,y)$ be the half-density distribution kernel of the half-wave operator $e^{-itP}$, which we formally express as
\[
	U(t,x,y) = \sum_j e^{-it\lambda_j} e_j(x)\overline{e_j(y)} |dV_M(x) \, dV_M(y) \, dt|^{1/2}.
\]
By interpreting $U$ as the operator taking half-density distributions on $M$ to half-density distributions on $\R \times M$, we write
\[
	\widehat{N'}(t) |dt|^{1/2} = (U(\delta_H)(t), \delta_H).
\]
Hence, we will need to compute the symbolic data of $U(\delta_H)$. This begins with the symbolic data of $\delta_H$.

\begin{lemma}\label{delta_H symbol}
	$\delta_H$ is a Lagrangian distribution in class $I^{\frac n 4 - \frac d 2}(M, \dot N^*H)$ with principal symbol having half-density part
	\[
		(2\pi)^{-\frac n 4 + \frac d 2} \frac{|g_H(x')|^{1/2}}{|g(x)|^{1/4}} |dx' \, d\xi''|^{1/2}
	\]
	where $(x,\xi) = (x',x'',\xi',\xi'')$ are canonical local coordinates and $x'' = 0$ defines $H$, and where $g_H$ denotes the restriction of the Riemannian metric $g$ on $M$ to $H$.
\end{lemma}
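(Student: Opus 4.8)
The plan is to realize $\delta_H$ as an explicit oscillatory integral and read off its order and principal symbol directly. First I would write, in local coordinates $x = (x',x'') \in \R^d \times \R^{n-d}$ with $H = \{x'' = 0\}$, the identity
\[
	\int_H f \, dV_H = \int_{\R^d} f(x',0)\, |g_H(x')|^{1/2}\, dx'
	= (2\pi)^{-(n-d)} \int_{\R^d}\int_{\R^{n-d}} f(x',x'')\, |g_H(x')|^{1/2}\, e^{i\langle x'', \xi''\rangle}\, dx''\, d\xi'',
\]
using Fourier inversion in the $x''$ variables. Pairing against $f\,|dV_M|^{1/2} = f\,|g(x)|^{1/2}\,|dx|^{1/2}$ and comparing, we see that $\delta_H$ has Schwartz kernel
\[
	\delta_H(x) = (2\pi)^{-(n-d)} \int_{\R^{n-d}} e^{i\langle x'', \xi''\rangle}\, \frac{|g_H(x')|^{1/2}}{|g(x)|^{1/2}}\, d\xi''\ \cdot\ |dV_M(x)|^{1/2},
\]
which is manifestly a Lagrangian distribution with phase $\varphi(x,\xi'') = \langle x'', \xi''\rangle$ and amplitude $|g_H(x')|^{1/2}/|g(x)|^{1/2}$, independent of $\xi''$.

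Next I would identify the geometry: the phase $\varphi(x,\xi'') = \langle x'',\xi''\rangle$ generates the Lagrangian $\{(x',0,\xi',\xi'') : \xi' = 0\} = \dot N^*H$ (together with the zero section, which we discard), so $\delta_H \in I^{*}(M, \dot N^*H)$ for the appropriate order $*$. To pin down the order, I would apply the standard convention (e.g. Hörmander): an oscillatory integral $\int e^{i\varphi(x,\theta)} a(x,\theta)\, d\theta$ on an $n$-manifold with $\theta \in \R^k$ and amplitude of order $\mu$ in $\theta$ lies in $I^{\mu + k/2 - n/4}(M,\Lambda)$. Here $k = n-d$, $\mu = 0$ (the amplitude is $\xi''$-independent, hence order $0$), so the order is $(n-d)/2 - n/4 = n/4 - d/2$, matching the claim.

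Finally, for the principal symbol I would use the symbol map for this class of oscillatory integrals. Parametrizing $\dot N^*H$ by $(x',\xi'')$ via the critical set of $\varphi$ (where $x'' = 0$), the principal symbol is obtained by restricting the amplitude to the critical set, multiplying by the $(2\pi)$-power dictated by the number of phase variables, and attaching the natural half-density $|dx'\,d\xi''|^{1/2}$ from the parametrization, together with a Maslov factor which is trivial here since the phase $\langle x'',\xi''\rangle$ is nondegenerately linear (signature zero). Combining the amplitude $|g_H(x')|^{1/2}/|g(x)|^{1/2}$ on $x''=0$ with the $(2\pi)^{-(n-d)}$ from Fourier inversion and the $(2\pi)^{(n-d)/2}$ from the symbol normalization gives the half-density part
\[
	(2\pi)^{-\frac{n}{4}+\frac{d}{2}}\, \frac{|g_H(x')|^{1/2}}{|g(x)|^{1/4}}\, |dx'\, d\xi''|^{1/2},
\]
where the power $|g(x)|^{1/4}$ rather than $|g(x)|^{1/2}$ accounts for the half-density $|dV_M|^{1/2} = |g(x)|^{1/4}|dx|^{1/2}$ factor that was stripped off when passing from the density $\delta_H$ to its half-density avatar. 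The main obstacle is purely bookkeeping: tracking the $(2\pi)$-powers and the density-versus-half-density factors consistently through Fourier inversion and the symbol normalization, and confirming the Maslov/signature contribution vanishes — all of which are routine once the oscillatory representation above is in hand.
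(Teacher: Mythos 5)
Your approach is essentially the same as the paper's: pair $\delta_H$ against a test half-density, Fourier-invert in $x''$ to obtain a local oscillatory representation with phase $\langle x'',\xi''\rangle$, identify the Lagrangian as $\dot N^*H$, and read off order and symbol. However, your stated $(2\pi)$-factors do not multiply to the answer you quote. You claim the Fourier-inversion factor $(2\pi)^{-(n-d)}$ and a symbol-normalization factor $(2\pi)^{(n-d)/2}$ combine to give $(2\pi)^{-n/4+d/2}$, but
\[
(2\pi)^{-(n-d)}\cdot(2\pi)^{(n-d)/2} = (2\pi)^{-(n-d)/2} = (2\pi)^{-n/2+d/2},
\]
which is off by $(2\pi)^{n/4}$. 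The source of the slip is the normalization convention: in H\"ormander's convention for $I^m(M,\Lambda)$ with $N$ phase variables on an $n$-manifold, one writes the distribution as $(2\pi)^{-(n+2N)/4}\int e^{i\phi(x,\theta)}a(x,\theta)\,d\theta$ with $a\in S^{m + n/4 - N/2}$, and the principal symbol (up to the Maslov factor) is $a\sqrt{d_\phi}$ on the critical set. With $N = n-d$ the prefactor to pull out is $(2\pi)^{-(n+2(n-d))/4} = (2\pi)^{-3n/4 + d/2}$, not $(2\pi)^{-(n-d)/2}$; the extra $(2\pi)^{-n/4}$ is the universal piece of H\"ormander's normalization. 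Dividing, the $(2\pi)$-content of the amplitude is $(2\pi)^{-(n-d)}\big/(2\pi)^{-3n/4+d/2} = (2\pi)^{-n/4+d/2}$, which is the claimed factor. A second, minor omission: you assert the natural half-density on $\dot N^*H$ coming from the parametrization is $|dx'\,d\xi''|^{1/2}$ without checking it; the paper computes the Leray density $d_\phi = |\partial(x',\xi'',\phi'_{\xi''})/\partial(x',x'',\xi'')|^{-1}|dx'\,d\xi''|$ and verifies the Jacobian determinant is $1$ for this linear phase, which is a one-line check worth including.
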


\begin{proof}
	Choose local coordinates $x = (x',x'') \in \R^n$, with $x' \in \R^d$ such that $x'' = 0$ defines $H$. Then if $f |dV_M|^{1/2}$ is a test half-density with support in the coordinate patch, we write it as $f(x) |dV_M(x)|^{1/2} = f(x) |g(x)|^{1/4} \, |dx|^{1/2}$, where $g$ is the local Riemannian metric. We then have
	\begin{align*}
		(\delta_H, f |dV_M|^{1/2}) &= \int_{\R^d} f(x',0) |g_H(x')|^\frac12 \, dx' \\
		&= (2\pi)^{-n + d} \int_{\R^{n-d}} \int_{\R^n} e^{i\langle x'', \xi'' \rangle} f(x) |g_H(x')|^\frac12 \, dx \, d\xi''.
	\end{align*}
	Hence, we express $\delta_H$ locally as an oscillatory integral
	\[
		\delta_H(x) = (2\pi)^{-\frac{3n}{4} + \frac d 2} \left( \int_{\R^{n-d}} e^{i\langle x'', \xi'' \rangle} (2\pi)^{-\frac n 4 + \frac d 2} \frac{|g_H(x')|^\frac12}{|g(x)|^{\frac14}} \, d\xi'' \right) \, |dx|^{1/2}.
	\]
	The phase function $\phi(x,\xi'') = \langle x'', \xi'' \rangle$ is nondegenerate and parametrizes $\dot N^* H = \{(x',0,0,\xi'') : x' \in \R^d, \ \xi'' \in  \R^{n-d} \setminus 0\}$. The half-density part of the invariant principal symbol is given by
	\[
		(2\pi)^{-\frac n 4 + \frac d 2} \frac{|g_H(x')|^\frac12}{|g(x)|^{\frac14}} \sqrt{d_\phi} = (2\pi)^{-\frac n 4 + \frac d 2} \frac{|g_H(x')|^\frac12}{|g(x)|^{\frac14}} |dx' \, d\xi''|^\frac12,
	\]
	where
	\begin{align*}
		d_\phi &= \left|\frac{\partial(x',\xi'',\phi'_{\xi''})}{\partial(x',x'',\xi'')} \right|^{-1} |dx' \, d\xi''| = \left|
		\det
		\begin{bmatrix} 
			I & 0 & 0 \\
			0 & 0 & I \\
			0 & I & 0
		\end{bmatrix}
		\right|^{-1} |dx' \, d\xi''| = |dx' \, d\xi''|
	\end{align*}
	is the Leray density on $\phi'_{\xi''}(x',x'',\xi'') = 0$. This completes the proof.
\end{proof}

We also recall from \cite{HormanderIV} the symbolic data of $U$. First, $U \in I^{-1/4}(\R \times M \times M; \mathcal C')$ where $\mathcal C$ is the canonical relation
\[
	\mathcal C = \{(t, -p(x,\xi), G^t(x,\xi) ; x,\xi) : (x,\xi) \in \dot T^*M \}
\]
with principal symbol with half-density part
\[
	(2\pi)^{1/4} |dx|^{1/2}|d\xi|^{1/2} |dt|^{1/2}.
\]
The following lemma describes the symbolic data of the composition of $U$ and $\delta_H$. One can obtain this from the standard calculus for transversally composing canonical relations as in \cite{HormanderIV, HormanderPaper, DuistermaatFIOs}. This can also be obtained as a special case of the broader calculation in Lemma 3.1 of \cite{emmett5}.

\begin{lemma}\label{U symbolic data} $U(\delta_H) \in I^{\frac{n}{4} - \frac{d}{2} - \frac{1}{4}}(\R \times M, \Lambda)$ where
\[
	\Lambda = \{(t, -p(x,\xi), G^t(x,\xi)) : (x,\xi) \in \dot N^*H\}.
\]
Given the parametrization of $\dot N^*H$ by $(x',\xi'')$ as before, we have that the half-density part of the principal symbol of $U(\delta_H)$ is
\[
	 (2\pi)^{-\frac n 4 + \frac d 2 + \frac 14} |dt|^{1/2} \frac{|g_H(x')|^{1/2}}{|g(x)|^{1/4}} |dx' \, d\xi''|^{1/2}.
\]
\end{lemma}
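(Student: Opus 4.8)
The plan is to obtain Lemma \ref{U symbolic data} by composing the Lagrangian distribution $\delta_H$ from Lemma \ref{delta_H symbol} with the half-wave operator $U$, viewed as a Fourier integral operator with canonical relation $\mathcal C$. First I would verify that the composition is transversal (or at least clean with excess zero), so that the standard FIO calculus of H\"ormander applies: one checks that $\mathcal C \circ \dot N^*H$ is given by $\Lambda = \{(t, -p(x,\xi), G^t(x,\xi)) : (x,\xi) \in \dot N^*H\}$, and that the graph $\mathcal C' \times \dot N^*H$ meets the diagonal-type submanifold $T^*(\R\times M) \times \Delta_{T^*M}$ cleanly. Since $G^t$ is a diffeomorphism on each energy level and the parametrization $(x',\xi'')$ of $\dot N^*H$ is $(n-1)$-dimensional while $\Lambda$ is parametrized by $(t,x',\xi'')$, the dimension count gives $\dim \Lambda = n = \dim(\R\times M)$, which is consistent with a Lagrangian in $T^*(\R\times M)$; this confirms transversality and also pins down the order: $\frac14 \cdot(\text{from }U) + (\frac n4 - \frac d2)(\text{from }\delta_H)$ shifted by the excess/dimension bookkeeping gives $\frac n4 - \frac d2 - \frac14$, matching the claimed class $I^{\frac n4 - \frac d2 - \frac14}(\R\times M, \Lambda)$.

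Next I would compute the principal symbol. The symbol of a composition of FIOs is, up to Maslov factors, the product of the pulled-back symbols integrated over the fiber of the clean intersection; here the intersection is transversal so there is no integration, only a product of half-densities on the appropriate Lagrangians together with the canonical pairing. Concretely, the half-density part of the symbol of $\delta_H$ is $(2\pi)^{-\frac n4+\frac d2}\frac{|g_H(x')|^{1/2}}{|g(x)|^{1/4}}|dx'\,d\xi''|^{1/2}$ by Lemma \ref{delta_H symbol}, and that of $U$ is $(2\pi)^{1/4}|dx|^{1/2}|d\xi|^{1/2}|dt|^{1/2}$. The composition law identifies $(x,\xi)\in\dot N^*H$ with the point where $\mathcal C$ is pinned, so the $|dx\,d\xi|^{1/2}$ factor of $U$'s symbol restricted to $\dot N^*H$ combines with $\delta_H$'s symbol; since $G^t$ is a symplectomorphism it preserves the Liouville density, so no Jacobian factor survives and one is left with $(2\pi)^{-\frac n4 + \frac d2 + \frac14}|dt|^{1/2}\frac{|g_H(x')|^{1/2}}{|g(x)|^{1/4}}|dx'\,d\xi''|^{1/2}$, exactly the claimed expression. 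I would double-check the power of $2\pi$ by tracking it through the transversal composition formula (each FIO contributes its own normalization and the pairing contributes none in the transversal case), which reproduces $-\frac n4 + \frac d2 + \frac14$.

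The main obstacle I expect is the careful bookkeeping of the transversal (versus clean) composition of $\mathcal C'$ with $\dot N^*H$, including the half-density and Maslov contributions: one must confirm the composition $\mathcal C \circ \dot N^*H$ is genuinely transversal at every relevant point, that the resulting set $\Lambda$ is a smooth conic Lagrangian (which uses that $\dot N^*H$ is conic and $G^t$ homogeneous), and that the principal symbol formula of H\"ormander applies verbatim. As the excerpt notes, this is essentially a special case of the computation in Lemma 3.1 of \cite{emmett5} and a textbook application of the calculus in \cite{HormanderIV, HormanderPaper, DuistermaatFIOs}, so rather than redo it from scratch I would invoke that calculus, check the dimension count and the two symbol factors multiply as above, and record the resulting class and principal symbol. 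The Maslov factor can be absorbed into the definition of the symbol (it is locally constant) and will reappear later as the factor $i^{\sigma_t}$ in the formula for $q(t)$, so no explicit computation of it is needed at this stage.
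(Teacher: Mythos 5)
Your approach matches the paper exactly: the paper gives no proof of this lemma beyond citing the standard calculus for transversally composing canonical relations (H\"ormander, Duistermaat, and the special case in Lemma 3.1 of \cite{emmett5}), and your proposal invokes precisely that machinery, with the symbol arising as the product of the two half-density symbols along the fiber product and no Jacobian surviving because $G^t$ is a symplectomorphism. That is the intended argument.

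Two small numerical slips should be cleaned up. First, the order of $U$ is $-\tfrac14$, not $\tfrac14$; the order of a transversal composition is simply the sum of orders, so $-\tfrac14 + (\tfrac n4 - \tfrac d2) = \tfrac n4 - \tfrac d2 - \tfrac14$ drops out directly, with no ``excess/dimension bookkeeping'' correction (the excess is zero in the transversal case). Second, the dimension count is off by one in two compensating places: $\dot N^*H$ is parametrized by $(x',\xi'') \in \R^d \times (\R^{n-d}\setminus 0)$ and hence is $n$-dimensional, and $\Lambda$, parametrized by $(t,x',\xi'')$, is $(n+1)$-dimensional, which equals $\dim(\R\times M)$, as it must for a Lagrangian in $T^*(\R\times M)$. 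Neither slip affects the conclusion, but both would need to be fixed before the argument is watertight.
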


Next, let $S$ be the operator with kernel $U(\delta_H)$ taking half-density distributions on $M$ to half-density distributions on $\R$. Then, 
\[
	\widehat{N'}(t) |dt|^{1/2} = S \circ \delta_H.
\]
By the standard calculus, we have
\[
	\WF (S \circ \delta_H) = \Lambda' \circ \dot N^*H = \{(t,p(x,\xi)) : G^t(x,\xi)) \in \dot N^*H, \ (x,\xi) \in \dot N^*H \}.
\]
That is, if $(t,\tau) \in \WF(S \circ \delta_H)$, then there exists a geodesic of length $t$ which departs and arrives at $H$ perpendicularly at both ends. We first deal with the case where there is at least a little transversality in the composition. In what follows, we consider a slice
\[
	\Lambda_{t_0} = \{G^{t_0}(x,\xi) : (x,\xi) \in \dot N^*H \}
\]
of $\Lambda$ for some fixed $t_0$. If we set $G^t(x,\xi) = (y,\eta) \in \dot N^*H$, we note that the condition \eqref{structured looping direction} is equivalent to the condition
\begin{equation}\label{maximal intersection}
	T_{(y,\eta)} \Lambda_t = T_{(y,\eta)} \dot N^*H.
\end{equation}
To this end, we consider a pseudodifferential partition of unity $1 = A + B$ on $\R \times M$ modulo a smooth operator where $A$ is microlocalized to a small conic neighborhood of points $(t,p(y,\eta),y,\eta)$ at which \eqref{maximal intersection} is satisfied, and where $B$ is microlocally supported away from such a neighborhood. We then write
\[
	S = S_A + S_B
\]
where $S_A$ and $S_B$ are the operators with kernels $A \circ U(\delta_H)$ and $B \circ U(\delta_H)$, respectively. We then decompose $\widehat{N'}|dt|^{1/2} = S_A(\delta_H) + S_B(\delta_H)$. The contribution of $S_A$ to $\chi * N'$ is then
\[
	(2\pi)^{-1} \int_\R e^{it\lambda} \widehat \chi(t) S_A(\delta_H)(t) \, dt,
\]
and similarly for $S_B$.

\begin{lemma}\label{B lem} The contribution of $S_B(\delta_H)$ to $\chi * N'$ in the asymptotics of Proposition \ref{off zero singularities} is $O(\lambda^{n-d-3/2})$.
\end{lemma}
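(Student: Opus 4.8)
The idea is that $S_B$ is the piece of our operator where the composition of the canonical relation $\Lambda$ with $\dot N^*H$ is, in a quantitative sense, not maximally non-transversal: near every point in the microsupport of $B$, the tangent space $T_{(y,\eta)}\Lambda_t$ and $T_{(y,\eta)}\dot N^*H$ fail to coincide, so at least one direction of genuine transversality survives. The first step is to make this quantitative: cover the microsupport of $B$ by finitely many conic charts, in each of which we can write $S_B(\delta_H)(t)$ as a finite sum of oscillatory integrals
\[
(2\pi)^{-N}\lambda^{m}\int e^{i\lambda\Psi(t,\theta)} b(t,\theta;\lambda)\,d\theta,
\]
obtained from the oscillatory representation of $U(\delta_H)$ in Lemma \ref{U symbolic data} (after rescaling the fiber variables $\xi\mapsto\lambda\xi$ as in the proof of Proposition \ref{big singularity prop}), where $m = n - d - 1$ is the order dictated by $U(\delta_H)\in I^{\frac n4-\frac d2-\frac14}$ together with the pairing against $\delta_H$ and the integration in the base. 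The phase $\Psi$ is stationary in $\theta$ exactly on the set where a geodesic departs and returns conormally to $H$, and the content of the microlocalization defining $B$ is precisely that on $\supp b$ we have a lower bound on the rank of the transverse Hessian: at least one pair of phase variables in which $\Psi$ has a non-degenerate critical structure.

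The second step is to exploit that surviving transversality. In the directions where $\Psi$ is non-degenerate we apply stationary phase (the standard version, e.g. Theorem 7.7.6 of \cite{HI} quoted above as Lemma \ref{H lemma}), gaining a factor $\lambda^{-1/2}$ for each such direction beyond what the principal order $\lambda^{n-d-1}$ already accounts for; since $B$ is microsupported away from the full-intersection locus \eqref{maximal intersection}, at least one such direction is always present, giving a contribution of size $O(\lambda^{n-d-1-1/2}) = O(\lambda^{n-d-3/2})$. In the remaining directions — where $d_\theta\Psi$ may vanish to higher order or not at all, i.e. the phase may have flat parts — we do not attempt stationary phase; instead we bound the oscillatory integral in absolute value using the symbol bounds on $b$ (which is compactly supported in $t$ since $\widehat\chi$ has compact support and in the angular fiber variables, with the required decay in the radial fiber variable inherited from $U$ and $\delta_H$), so those directions contribute an $O(1)$ factor. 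The point is that no cancellation is needed off the stationary set: the flat directions are harmless as long as one genuinely transverse direction produces the decay. Summing the finitely many charts, $S_B(\delta_H)$ contributes $O(\lambda^{n-d-3/2})$ to $N'*\chi$.

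The main obstacle is organizing the phase $\Psi$ in each chart so that the transverse Hessian rank is controlled uniformly: the composition $\Lambda'\circ\dot N^*H$ is not assumed clean, so the critical set of $\Psi$ can be quite irregular, and one must be careful that the decomposition $1 = A + B$ genuinely isolates all of the non-transversality into $S_A$. Concretely, the hard part is verifying that on the conic support of $B$ one can, after a linear change of fiber variables depending smoothly and homogeneously on the base point, split off at least one non-degenerate pair of variables while keeping the bounds on $b$ uniform; once that normal form is in place, the estimate is a routine combination of stationary phase in the good variables and the triangle inequality in the bad ones. (The delicate analysis of the genuinely flat composition — the $S_A$ piece — is deferred to the oscillatory-integral estimate with flat phases, Lemma \ref{very stationary phase lemma}, and is not needed here.)
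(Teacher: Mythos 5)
Your outline correctly identifies the architecture of the proof and the power counting: write $S_B(\delta_H)$ in local oscillatory form with $N$ phase variables $\theta$ and symbol $a$ of order $\tfrac{n-d-N}{2}$, rescale $\theta \mapsto \lambda\theta$, cut off to a conic neighborhood of $\{p=1\}$, and then try to gain $\lambda^{-1/2}$ per nondegenerate direction of the full-phase Hessian $\partial^2(\phi+t)$ on the critical set. You also correctly identify where the difficulty lives. But the step you defer as ``the hard part'' is precisely the content of Lemma~\ref{B lem}, and without it what remains is a roadmap rather than a proof.

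Concretely: after the rescaling, the integral has a prefactor $\lambda^{(n-d+N)/2}$, so to land at $O(\lambda^{n-d-3/2})$ you need
\[
\rank\,\partial^2_{t,y',\theta}\bigl(\phi + t\bigr) \;\geq\; N - (n-d) + 3
\]
on the critical set intersected with $\supp b$. What comes for free from $\phi$ being a nondegenerate phase function parametrizing $\Lambda$ is a $2\times 2$ nondegenerate $(t,\theta_1)$-block and a lower-right $(y',\theta')$-block of rank $\geq N-(n-d)$, giving only $N-(n-d)+2$ --- which, if it is an \emph{equality}, produces $O(\lambda^{n-d-1})$, i.e.\ the contribution of $S_A$, not the claimed gain. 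The heart of the lemma is that equality in the rank forces \eqref{maximal intersection} to hold, contradicting the definition of $B$. The paper proves this by a linear-algebra argument: if $\rank\begin{bmatrix}\phi''_{y'y'}&\phi''_{y'\theta'}\\\phi''_{\theta'y'}&\phi''_{\theta'\theta'}\end{bmatrix} = N-(n-d)$ exactly, then the Lagrangian parametrization
\[
T_{(y_0,\eta_0)}\Lambda_{t_0} = \bigl\{(\partial y,\ \phi''_{yy}\partial y + \phi''_{y\theta}\partial\theta)\ :\ \phi''_{\theta y}\partial y + \phi''_{\theta\theta}\partial\theta = 0\bigr\}
\]
together with the fact that $d_{y,\theta}\phi'_\theta$ has full rank $N$ (nondegeneracy) forces $\partial y'' = 0$ and $\phi''_{y'y}\partial y + \phi''_{y'\theta}\partial\theta = 0$ on the kernel, so $T\Lambda_{t_0}\subset T\dot N^*H$ and hence equality by dimension count. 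Your proposal asserts the rank bound (``at least one genuinely transverse direction is present'') but does not supply this argument; a ``normal form'' that splits off a nondegenerate pair would be a consequence of the rank bound, not a route to it.

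Two smaller points. First, the claim that $b$ has ``the required decay in the radial fiber variable inherited from $U$ and $\delta_H$'' is not right after rescaling: $a(t,y,\lambda\theta)$ is a symbol of order $\tfrac{n-d-N}{2}$, which after dividing by $\lambda^{(n-d-N)/2}$ behaves like $|\theta|^{(n-d-N)/2}$ and is never integrable on $\R^N$; the compact $\theta$-support must come from a $\beta(p(y,\phi'_y))$-cutoff, with the complement handled by integration by parts in $t$ (using $\phi'_t = -p$). Second, the phrase ``principal order $\lambda^{n-d-1}$'' is only meaningful once one knows the minimal possible Hessian rank is $N-(n-d)+2$; that too is part of what needs to be verified, and it is where the nondegeneracy of $\phi$ and the block structure of the Hessian enter.
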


\begin{lemma}\label{A lem} The contribution of $S_A(\delta_H)$ to $\chi * N'$ in the asymptotics of Proposition \ref{off zero singularities} is 
\[
	\lambda^{n-d-1} \sum_{t\in\mathcal T} \widehat \chi(-t) e^{-it\lambda} q(t) + o(\lambda^{n-d-1}).
\]
\end{lemma}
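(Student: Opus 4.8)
The plan is to analyze the operator $S_A$ microlocally near each slice $\Lambda_{t_0}$ on which the maximal-intersection condition \eqref{maximal intersection} holds, i.e. near each $t_0 \in \mathcal T$, and to extract from the resulting stationary-phase asymptotics precisely the dynamical quantity $q(t_0)$. First I would observe that, by the wavefront set computation preceding the lemma together with the fact that $S\Sigma_t$ has positive measure only for $t \in \mathcal T$, the microlocal support of $A \circ U(\delta_H)$ is confined to a neighborhood of $\WF(S\circ\delta_H) \cap \{(t,\tau): \eqref{maximal intersection}\text{ holds}\}$, which is a discrete-in-$t$ set $\{t_0\} \times \{p(x,\xi)=1\}$ ranging over $t_0 \in \mathcal T$. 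Since $\widehat\chi$ has compact support in $\R\setminus 0$, only finitely many $t_0 \in \mathcal T \cap \supp\widehat\chi$ contribute, and I can treat each separately with a microlocal cutoff, then sum.

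For a fixed $t_0 \in \mathcal T$, I would parametrize $\Lambda$ near the slice $\Lambda_{t_0}$ using the coordinates $(t,x',\xi'')$ on $\dot N^*H$ furnished by Lemma \ref{U symbolic data}, and write the contribution as an oscillatory integral in $(t, \text{phase variables})$ whose phase, after the rescaling $\xi \mapsto \lambda\xi$ familiar from Section \ref{SINGULARITY AT ZERO}, has the form $\lambda$ times something stationary exactly on $S\Sigma_{t_0}$, with the amplitude carrying the principal symbol half-density $(2\pi)^{-n/4+d/2+1/4}|dt|^{1/2}|g_H(x')|^{1/2}|g(x)|^{-1/4}|dx'\,d\xi''|^{1/2}$ from Lemma \ref{U symbolic data}, paired against a copy of the $\delta_H$ symbol from Lemma \ref{delta_H symbol}. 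The pairing $(S_A(\delta_H), \delta_H)$ then produces the Leray measure on $SN^*H$ — yielding the factor $|g_H(x')|$ and the power of $|g(x)|$ that appear in $q(t)$ — and the critical manifold is exactly $S\Sigma_{t_0}$ with the transverse Hessian determinant giving $\sqrt{J_{t_0}}$ and the signature contributing the Maslov factor $i^{\sigma_{t_0}}$. Integrating $e^{-it\lambda}\widehat\chi(t)$ against this in $t$ collapses $t$ to $t_0$ and divides by $-it_0$, reproducing $(2\pi)^{-n+d}\int_{S\Sigma_{t_0}} i^{\sigma_{t_0}}\sqrt{J_{t_0}}\cdot\widehat\chi(-t_0)e^{-it_0\lambda}/(-it_0) = \widehat\chi(-t_0)e^{-it_0\lambda}q(t_0)$, all times $\lambda^{n-d-1}$. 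Here the power count is: $\lambda^n$ from the rescaling, times $\lambda^{-1}$ from the $t$-integration (the phase is nondegenerate in $t$ paired with the radial variable, exactly as in the Hessian block $\left[\begin{smallmatrix}0&1\\1&0\end{smallmatrix}\right]$ of Section \ref{SINGULARITY AT ZERO}), times $\lambda^{-(n-d)}$ from the stationary phase in the $(n-d)$ directions transverse to $S\Sigma_{t_0}$ inside the fiber, net $\lambda^{n-d-1}$.

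The main obstacle is that \emph{the paper explicitly refuses to assume clean intersection}: on $S\Sigma_{t_0}$ the map $dG^{t_0}$ is an isomorphism of $T\dot N^*H$ onto $T\dot N^*H$ by \eqref{structured looping direction}, but the composition $\Lambda'_{t_0}\circ \dot N^*H$ need not be clean as a whole because the critical set $S\Sigma_{t_0}$ may be a proper, possibly irregular subset of $SN^*H$ and the phase may be \emph{flat} rather than nondegenerate-transverse along the complement. So I cannot invoke the Duistermaat--Guillemin clean-composition calculus directly; instead I would localize further, splitting the $SN^*H$ integration into a piece over a neighborhood of $S\Sigma_{t_0}$ where the transverse Hessian is nondegenerate — handled by genuine stationary phase as above — and a piece away from $S\Sigma_{t_0}$, and on the latter I would use the flat-phase oscillatory integral estimate (Lemma \ref{very stationary phase lemma}) to show the contribution is $o(\lambda^{n-d-1})$. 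The delicate point is that the "nondegenerate neighborhood" must be chosen uniformly enough that the stationary-phase remainder is genuinely $o(\lambda^{n-d-1})$ and not merely bounded; this is where the condition \eqref{structured looping direction}, which guarantees $J_{t_0}$ is a well-defined positive function on all of $S\Sigma_{t_0}$, does the real work, and where I would lean on Lemma \ref{very stationary phase lemma} to absorb the behavior near $\partial S\Sigma_{t_0}$. A final bookkeeping step: the Maslov factors $\sigma_{t_0}$ defined here must be checked to be locally constant on $S\Sigma_{t_0}$ and to satisfy the cocycle identity asserted in Lemma \ref{group-ish}, which is the content of Remark \ref{maslov remark} and follows from tracking signatures through the composition.
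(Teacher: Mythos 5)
You have the right tools in mind — Lemmas \ref{local S_A}, \ref{minimal rank lemma}, and the flat-phase Lemma \ref{very stationary phase lemma} — and you correctly flag that the obstruction is the lack of cleanness. But the implementation has three concrete problems. First, the plan to localize in $t$ around each $t_0 \in \mathcal{T} \cap \supp \widehat\chi$ and sum over ``finitely many'' contributions does not work: $\mathcal{T}$ is only asserted to be countable, not discrete, and may well accumulate inside $\supp \widehat\chi$. The paper never isolates individual $t_0$'s; instead it applies Lemma \ref{variable very stationary phase lemma} once, and the countable sum over $t$ appears as the singular part of the pushforward measure $df$, with the absolutely continuous part killed by Riemann--Lebesgue. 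There is no microlocal cutoff argument that recovers this.

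Second, your power count is internally inconsistent: $\lambda^{n} \cdot \lambda^{-1} \cdot \lambda^{-(n-d)} = \lambda^{d-1}$, which you then call $\lambda^{n-d-1}$. In the paper's computation the rescaling is only over $\eta'' \in \R^{n-d}$, giving $\lambda^{n-d}$, stationary phase is done only in the two variables $(t,r)$ with Hessian $\begin{bmatrix} * & 1 \\ 1 & 0 \end{bmatrix}$ (yielding $\lambda^{-1}$ and no Maslov contribution), and the remaining $(y',\omega)$-integral contributes $O(1)$ via the flat-phase lemma. Third, and most substantively, you attribute $\sqrt{J_{t_0}}$ to a ``transverse Hessian determinant'' of a stationary phase in $(n-d)$ directions transverse to $S\Sigma_{t_0}$. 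No such transverse stationary phase exists: by Lemma \ref{minimal rank lemma}, the condition \eqref{maximal intersection} is \emph{equivalent} to the full Hessian $\psi''$ having minimal rank $2$ on $S\Sigma_{t_0}$, so the intersection is as tangential as possible and there are no additional nondegenerate directions to integrate out. The factor $\sqrt{J_t}$ enters through the invariant principal symbol of $S_A$ computed in Lemma \ref{local S_A} — it is the Jacobian of the symbol transport along the Lagrangian $\Lambda$, determined by identifying the two half-density expressions via \eqref{def J} — not through a stationary phase Hessian. Conflating the two is precisely the trap that assuming clean composition would invite, which the paper is at pains to avoid.
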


Lemma \ref{B lem} is made possible by the `partial transversality' of the composition $\WF'(S_B) \circ \dot N^*H$. On the other hand, Lemma \ref{A lem} owes its proof in part to the extra structure induced by the condition \eqref{maximal intersection}.

\subsection{Proof of Lemma \ref{B lem}}

Fix canonical local coordinates $(y,\eta)$ of $\dot T^*M$ so that $y'' = 0$ defines $H$. Then, we write the kernel of $S_B$ in local oscillatory form as
\[
	\left( \int_{\R^N} e^{i \phi(t,y,\theta)} a(t,y,\theta) \, d\theta \right) |dt \, dy|^{1/2}
\]
where here $\phi$ is a nondegenerate homogeneous phase function parametrizing $\Lambda$ via
\[
	\{(t,\phi'_t, y, \phi'_y) : \phi'_\theta = 0\} \subset \Lambda,
\]
and where $a$ is a polyhomogeneous symbol of order $\frac{n-d-N}{2}$ by Lemma \ref{U symbolic data} and 
\[
	\ord S_B = \ord a + \frac{N}{2} - \frac{n+1}{4}.
\]
By a partition of unity, we write the contribution of $S_B$ to $N' * \chi(\lambda)$ as a finite sum of oscillatory integrals of the form
\[
	\int_\R \int_{\R^d} \int_{\R^N} e^{i (\phi(t,y,\theta) + t\lambda)} a(t,y,\theta) \, d\theta \, dy' \, dt
\]
where we are to understand $y$ as $(y',0)$ in the integral above. By a change of variables $\theta \mapsto \lambda \theta$, we have
\[
	= \lambda^N \int_\R \int_{\R^d} \int_{\R^N} e^{i \lambda(\phi(t,y,\theta) + t)} a(t,y,\lambda \theta) \, d\theta \, dy' \, dt.
\]
We cut this integral by $\beta(p(y,\phi'_y))$ and $1 - \beta(p(y,\phi'_y))$ where $\beta$ is a smooth function with $\beta \equiv 1$ about a neighborhood of $1$ and $\beta \equiv 0$ outside $(1/2,2)$. The latter contributes a $O(\lambda^{-\infty})$ term by an integration by parts argument in the $t$ variable, and by using that $\phi'_t = - p(y,\phi'_y) = - 1$ on the critical point
\[
	0 = d_{t,y',\theta}(\phi(t,y,\theta) + t) = \begin{bmatrix}
		- p(y,\phi'_y) + 1 \\
		\phi_{y'}' \\
		\phi_\theta'
	\end{bmatrix}.
\]
We are now left to estimate 
\[
	\lambda^{\frac{n - d + N}{2}} \int_\R \int_{\R^d} \int_{\R^N} e^{i \lambda (\phi(t,y,\theta) + t)} b(t,y,\theta, \lambda) \, d\theta \, dy' \, dt
\]
where
\[
	b(t,y,\theta,\lambda) = \lambda^{-\frac{n-d-N}{2}} a(t,y,\lambda \theta) \beta(p(y,\phi'_y))
\]
is a polyhomogeneous symbol of order $0$ with $\lambda$ as the conic variable. To prove Lemma \ref{B lem}, it suffices to show $\rank \phi'' \geq N - (n - d) + 3$ on the critical set of $\phi(t,y,\theta) - t$. To this end, suppose $(t_0,y_0',\theta_0)$ is in the critical set. By a rotation, assume $\theta_0 = (|\theta_0|, 0,\ldots,0)$, and assume by homogeneity that $p(y_0,\phi'_y(t_0,y_0,\theta_0)) = 1$. Taking $\theta = (\theta_1,\theta')$, we have differential
\[
	\phi_{t,\theta_1, y', \theta'}' = \begin{bmatrix}
		- p(y,\phi_y') + 1 \\
		\frac{\theta_1}{|\theta|} \phi(t,y,\frac{\theta}{|\theta|}) \\
		\phi'_{y'} \\
		\phi'_{\theta'}
	\end{bmatrix}
\]
and, at $(t_0,y_0',\theta_0)$, we have Hessian
\begin{equation}\label{hessian 1}
	\phi_{t,\theta_1, y', \theta'}'' = \begin{bmatrix}
		0 & -1 & * & * \\
		-1 & 0 & 0 & 0 \\
		* & 0 & \phi_{y'y'}'' & \phi_{y'\theta'}'' \\
		* & 0 & \phi_{\theta'y'}'' & \phi_{\theta'\theta'}''
	\end{bmatrix}
\end{equation}
Hence, it suffices to show the lower right block has rank at least $N - (n-d) + 1$. 

We note that, for fixed $t = t_0$, $\phi(t_0,y,\theta)$ is a nondegenerate phase function parametrizing $\Lambda_{t_0}$, i.e. $(y,\phi'_y(t_0,y,\theta)) \in \Lambda_{t_0}$. Then,
\[
	T_{(y_0,\eta_0)} \Lambda_{t_0} = \{ (\partial y, \phi_{yy}'' \partial y + \phi_{y\theta}'' \partial \theta) : \phi_{\theta y}'' \partial y + \phi_{\theta \theta}'' \partial \theta = 0 \}.
\]
Reading off the $\theta_1, y', \theta'$ submatrix from \eqref{hessian 1}, we find
\[
	\rank \phi_{y',\theta}'' = \rank \phi_{y',\theta'}'' \leq N - (n-d).
\]
On the other hand, the submatrix
\begin{equation}\label{differential matrix}
	d_{y',\theta} \phi'_\theta = \begin{bmatrix}
		\phi''_{\theta y'} & \phi''_{\theta\theta}
	\end{bmatrix}
\end{equation}
of $\phi_{y',\theta}''$ is obtained by eliminating $n-d$ columns from the rank-$N$ matrix $d_{y',\theta} \phi'_\theta$, and hence has rank at least $N - (n-d)$, and hence both \eqref{differential matrix} and $\phi''_{y',\theta}$ have rank exactly $N - (n - d)$. We conclude that the span of the missing columns $\phi''_{y'' \theta}$ and the span of \eqref{differential matrix} have trivial intersection. Then, the condition
\begin{equation}\label{kernel condition}
	\phi''_{\theta y} \partial y + \phi''_{\theta\theta} \partial \theta = 0
\end{equation}
reads
\[
	\phi''_{\theta y'} \partial y' + \phi''_{\theta\theta} \partial \theta = -\phi''_{\theta y''} \partial y'',
\]
and we conclude that both left and right sides must vanish. Moreover, $\rank \phi''_{\theta y''} = n-d \leq N$, and hence is injective, and so $\partial y'' = 0$. Moreover, we also have that the top rows of $\phi''_{y',\theta}$ are linear combinations of the bottom rows. Namely, there exists a matrix $A$ for which
\[
	\phi''_{y'y'} = A \phi''_{\theta y'}  \qquad \text{ and } \qquad  \phi''_{y'\theta} = A \phi''_{\theta \theta}.
\]
In particular, if \eqref{kernel condition} is satisfied,
\[
	\phi''_{y'y} \partial y + \phi''_{y'\theta} \partial \theta = \phi''_{y'y'} \partial y' + \phi''_{y'\theta} \partial \theta = A (\phi''_{\theta y'} \partial y' + \phi''_{\theta \theta} \partial \theta) = 0.
\]
We conclude that $T_{(y_0,\eta_0)} \Lambda_{t_0} \subset T_{(y_0,\eta_0)} \dot N^* H$, and hence equality holds. This contradiction completes the proof.

\subsection{Proof of Lemma \ref{A lem}} 

We start by writing down the key oscillatory integral estimates.

\begin{lemma}\label{very stationary phase lemma}
	Let $a$ be a function on $\R^n \times \R_+$, $a_0 \in L^1(\R^n)$, and $m$ a positive integer so that
	\[
		\|a(x,\lambda) - \lambda^m a_0(x) \|_{L^1(dx)} = o(\lambda^m).
	\]
	Let $C_t$ denote the set of all points $x$ in $\phi^{-t}(t)$ such that the intersection of $\phi^{-1}(t)$ with every open neighborhood of $x$ has positive Lebesgue measure. Then,
	\[
		\int_{\R^n} e^{i\lambda \phi(x)} a(x,\lambda) \, dx = \lambda^m \sum_t e^{it\lambda} \int_{C_t} a_0(x) \, dx + o(\lambda^m).
	\]
\end{lemma}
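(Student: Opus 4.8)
The plan is to strip off the amplitude, rewrite the integral as the Fourier transform of a pushforward measure, isolate the atomic part of that measure as the main term, and show the rest is a Rajchman measure.

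\emph{Reduction to $a_0$ and to a pushforward measure.} Since $|e^{i\lambda\phi}|\equiv 1$, the hypothesis $\|a(\cdot,\lambda)-\lambda^m a_0\|_{L^1}=o(\lambda^m)$ gives
\[
\int_{\R^n}e^{i\lambda\phi(x)}a(x,\lambda)\,dx=\lambda^m\int_{\R^n}e^{i\lambda\phi(x)}a_0(x)\,dx+o(\lambda^m),
\]
so it is enough to prove $\int_{\R^n}e^{i\lambda\phi}a_0\,dx=\sum_t e^{it\lambda}\int_{C_t}a_0\,dx+o(1)$. Introduce the finite complex Radon measure $\nu:=\phi_*(a_0\,dx)$ on $\R$, so the left-hand side equals $\int_\R e^{i\lambda t}\,d\nu(t)$; the claim is that this is, up to $o(1)$, the Fourier transform of the atomic part of $\nu$, with atoms located exactly at the values $t$ carrying the masses $\int_{C_t}a_0\,dx$.

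\emph{The atomic part is the main term.} Only countably many level sets $\phi^{-1}(t)$ have positive Lebesgue measure, since they are pairwise disjoint; for such $t$, $C_t$ is the topological support of Lebesgue measure restricted to $\phi^{-1}(t)$, and $C_t=\emptyset$ otherwise. In all cases $\phi^{-1}(t)\setminus C_t$ is Lebesgue-null — the complement of the support of a $\sigma$-finite Borel measure on $\R^n$ is null — hence $\nu(\{t\})=\int_{\phi^{-1}(t)}a_0\,dx=\int_{C_t}a_0\,dx$. Put $B:=\bigcup_t C_t$, a countable union of measurable sets. Since $\phi\equiv t$ on $C_t$, dominated convergence (using $\sum_t\int_{C_t}|a_0|\le\|a_0\|_{L^1}$) yields the exact identity
\[
\int_B e^{i\lambda\phi(x)}a_0(x)\,dx=\sum_t e^{it\lambda}\int_{C_t}a_0\,dx.
\]
On the complement set $\nu_0:=\phi_*\bigl(a_0\,\mathbf 1_{\R^n\setminus B}\,dx\bigr)$. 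Then $\nu_0$ is non-atomic: an atom at $t$ would force $|\phi^{-1}(t)\setminus B|>0$, which is impossible because $\phi^{-1}(t)\setminus C_t$ is null and $C_t\subseteq B$ whenever $|C_t|>0$ (while $|\phi^{-1}(t)|=0$ otherwise). So the lemma reduces to showing $\int_\R e^{i\lambda t}\,d\nu_0(t)\to 0$.

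\emph{The crux: the residual measure is Rajchman.} Showing $\widehat{\nu_0}(\lambda)\to 0$ is the delicate point and the step I expect to be the main obstacle: smoothness of $\phi$ is essential here, since there exist non-atomic measures (e.g.\ Riesz products) whose Fourier transform does not vanish at infinity, so no purely measure-theoretic argument can suffice. The plan is to reduce by $L^1$-density to $a_0\in C_c^\infty$ and then stratify $\supp a_0$ by the order of vanishing of the gradient, $\Sigma_k=\{\nabla\phi=\dots=\nabla^k\phi=0\}$ with $\Sigma_\infty=\bigcap_k\Sigma_k$: off $\Sigma_1$ one changes variables so that $\phi$ is a coordinate and applies Riemann--Lebesgue; on each finite-type piece $\Sigma_k\setminus\Sigma_{k+1}$ a van der Corput / sublevel-set estimate gives a quantitative $O(\lambda^{-\delta_k})$ bound over a neighborhood, the neighborhoods being chosen to cover $\Sigma_1\setminus\Sigma_\infty$ with controlled overlap; and on a shrinking neighborhood of the infinitely-flat residual set $\Sigma_\infty\setminus B$ one exploits that, having excised $B$, every level set there is Lebesgue-null, so a covering argument together with the continuity of $\phi$ forces that contribution to be small. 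Controlling this last, infinitely-flat-but-not-measure-thick, piece is exactly what makes the argument delicate, and it is precisely why the statement must be phrased via $C_t$ — the measure-theoretic support of the level set — rather than the naive set on which $\phi$ is locally constant. Letting $\lambda\to\infty$ and then refining the covering finishes the estimate, and hence the lemma.
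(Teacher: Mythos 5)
Your proposal takes the same route as the paper: reduce to $a_0$ via the $L^1$ hypothesis, pass to the pushforward measure $\nu=\phi_*(a_0\,dx)$ (the paper writes this as $df$ for $f(t)=\int_{\{\phi\le t\}}a_0$), isolate the countable atomic part supported on the $t$ with $|\phi^{-1}(t)|>0$ as the main term (replacing $\phi^{-1}(t)$ by $C_t$ at no cost since the difference is null), and then show the residual has vanishing Fourier transform. The two arguments part ways on the residual. The paper asserts a decomposition
\[
df=\sum_{t:\,|\phi^{-1}(t)|>0}\Bigl(\int_{\phi^{-1}(t)}a_0\Bigr)\delta_t+g\,dt,\qquad g\in L^1(\R),
\]
and invokes Riemann--Lebesgue on $g\,dt$. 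You instead observe, correctly, that nothing forces the non-atomic part of $\nu$ to be absolutely continuous, and that a non-atomic singular measure need not be Rajchman, so the conclusion cannot follow from measure theory alone.

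Your concern is well founded and in fact points at a gap in the paper's own argument. Even smoothness of $\phi$ does not preclude a singular continuous piece: take $\phi(x_1,x_2)=f(x_1)$ on $[0,1]^2$ with $f$ smooth, strictly increasing, and $f'\equiv 0$ exactly on a fat Cantor set $K\subset[0,1]$. Then $|f(K)|=\int_K f'=0$ while $\nu(f(K))=|K|>0$, and $\nu$ is atomless because every level set $\phi^{-1}(t)$ is a segment and hence planar-null. So the asserted decomposition into atomic plus absolutely continuous parts fails, and the Riemann--Lebesgue step is not justified at the generality of the lemma.

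That said, your proposal does not close the gap either. The stratification by vanishing order of $\nabla\phi$, the van der Corput estimates on finite-type strata, and especially the covering argument on $\Sigma_\infty\setminus B$ are a plan rather than a proof, and the $\Sigma_\infty\setminus B$ piece is precisely the hard one: after excising $B$, you are left with a set where $\phi$ is infinitely flat but not locally constant on any set of positive measure, and no quantitative decay mechanism has been exhibited there; a bare covering argument will not by itself make that contribution $o(1)$ uniformly in $\lambda$. So you and the paper arrive at the same bottleneck; the paper steps over it silently, you flag it but do not resolve it. Closing the lemma as stated appears to require either additional structural hypotheses on $\phi$ coming from the application (where $\phi$ is a critical-value function of a generating function for a canonical relation) or a genuinely new idea at this step.
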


\begin{remark} \label{very stationary remark}
Regarding the points in $C_t$, we note that the phase function $\phi$ in Lemma \ref{very stationary phase lemma} satisfy that all of its derivatives must vanish on $C_t$. Otherwise, $\phi^{-1}(t)$ will be supported on a lower-dimensional variety near a point in $C_t$. We also note that $C_t$ is closed and differs from $\phi^{-1}(t)$ on a set of measure zero. It follows that
\[
	C_t \subset \{x : \phi(x) = t, \ \phi'(x) = 0, \text{ and } \phi''(x) = 0 \} \subset \phi^{-1}(t),
\]
and the difference between any two such sets has zero measure in $\R^n$.

Regarding the stationary values $t$, the only nonzero terms in the sum on the right are those for which $\{\phi = t\}$ has positive Lebesgue measure. We then see that the sum is countable and absolutely convergent.
\end{remark}

\begin{proof}[Proof of Lemma \ref{very stationary phase lemma}] First, by the $L^1$ bounds in the hypothesis,
\begin{multline*}
	\left| \int_{\R^n} e^{i\lambda \phi(x)} a(x,\lambda) \, dx - \lambda^m \int_{\R^n} e^{i\lambda \phi(x)} a_0(x) \, dx \right| \\
	\leq \|a(x,\lambda) - \lambda^m a_0(x) \|_{L^1(dx)} = o(\lambda^m),
\end{multline*}
hence it suffices to show
\[
	\int_{\R^n} e^{i\lambda \phi(x)} a_0(x) \, dx = \sum_t e^{it\lambda} \int_{\phi^{-1}(t)} a_0(x) \, dx + o(1).
\]
We note that $\phi^{-1}(t) \setminus C_t$ has measure zero in $\R^n$, so we exclude it from the domain of integration at no loss, and the lemma will follow. To this end, write
\[
	f(t) = \int_{\{\phi \leq t\}} a_0(x) \, dx.
\]
Now $f$ has bounded variation (by $\|a_0\|_{L^1}$), and hence the distributional derivative $df$ is a signed Radon measure on $\R$, which can be written as a sum of singular and absolutely continuous parts as
\[
	df = \sum_{t : |\phi^{-1}(t)| > 0} \left( \int_{\phi^{-1}(t)} a_0(x) \, dx \right) \, \delta_t + g \, dt,
\]
where $g \in L^1(\R)$. By the layer cake formula, we write
\begin{align*}
	\int_{\R^n} e^{i\lambda \phi(x)} a_0(x) \, dx &= \int_\R e^{it\lambda} df(t) = \sum_t e^{it\lambda} \lambda^m \int_{\phi^{-1}(t)} a_0(x) \, dx + \int_{\R} e^{it\lambda} g(t) \, dt
\end{align*}
where the last integral vanishes in the limit $\lambda \to \infty$ by the Riemann-Lebesgue lemma.
\end{proof}

\begin{lemma} \label{variable very stationary phase lemma}
	Let $a(x,y,\lambda)$ be a polyhomogeneous symbol of order $m$ on $\R^n \times \R^d \times [0,\infty)$ with compact $(x,y)$-support and principal part $a_0(x,y) \lambda^m$. Let $\phi(x,y)$ be a smooth phase function on the support of $a$ such that
	\[
		\det \phi_{xx}'' \neq 0 \qquad \text{ whenever } \qquad \phi'_x = 0.
	\]
	Furthermore, suppose the critical set $\phi'_x = 0$ is parametrized by $(x(y), y)$ where $x(y)$ is a smooth function of $y$ in the $y$-support of $a$. Let $C_t$ be the set of points $y_0$ such that $\phi(x(y),y) = t$ on a positive-measure subset of every open neighborhood of $y_0$.
	Then,
	\begin{multline*}
		\int_{\R^n} \int_{\R^d} e^{i\lambda \phi(x,y)} a(x,y,\lambda) \, dy \, dx \\
		= (2\pi)^{\frac d 2} \lambda^{m - \frac d2} e^{\frac{\pi i \sigma}{4}} \sum_t e^{it\lambda} \int_{C_t} a_0(x(y),y) |\det \phi_{xx}''(x(y),y)|^{-\frac12} \, dy + o(\lambda^{m - \frac d2})
	\end{multline*}
	where $\sigma = \sgn \phi''_{xx}$.
\end{lemma}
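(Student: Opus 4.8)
The plan is to integrate out the $x$ variables by ordinary (nondegenerate) stationary phase, and then feed the resulting one‑parameter family of oscillatory integrals in $y$ into Lemma \ref{very stationary phase lemma}, whose phase $\Phi(y):=\phi(x(y),y)$ is flat precisely along the set $C_t$. So there are really two steps: a nondegenerate stationary phase in $x$, and a ``very stationary'' (layer‑cake) integration in $y$.

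For the first step I would begin by discarding the lower‑order part of the symbol: write $a=a_0(x,y)\lambda^m+r(x,y,\lambda)$ with $r\in S^{m-1}$, so the $r$‑contribution is one power of $\lambda$ below the eventual main term and is absorbed into the final $o(\lambda^{m-d/2})$. Next, localize in $x$ by a smooth partition of unity adapted to the critical manifold $\{x=x(y)\}$ of $\phi(\cdot,y)$: on the region where $|\phi'_x|$ is bounded below, repeated integration by parts in $x$ gives an $O(\lambda^{-\infty})$ contribution, uniformly in $y$ over the compact support of $a$. On the remaining neighborhood of $\{x=x(y)\}$, the hypothesis $\det\phi''_{xx}\neq 0$ together with the smooth section $x=x(y)$ places us in the setting of the classical stationary phase expansion (Lemma \ref{H lemma}), now carrying $y$ along as a parameter. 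This yields
\[
	\int e^{i\lambda\phi(x,y)}a(x,y,\lambda)\,dx = e^{i\lambda\Phi(y)}\,\tilde a(y,\lambda),
\]
where $\tilde a(\cdot,\lambda)$ is smooth, supported in a fixed compact set, and satisfies, uniformly in $y$,
\[
	\tilde a(y,\lambda) = (2\pi)^{d/2}\lambda^{m-d/2}e^{\frac{\pi i\sigma}{4}}|\det\phi''_{xx}(x(y),y)|^{-1/2}a_0(x(y),y) + O(\lambda^{m-d/2-1}),
\]
with $\sigma=\sgn\phi''_{xx}(x(y),y)$ locally constant by nondegeneracy (taken constant on $\supp a$, else decompose and sum).

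It then remains to handle $\int_{\R^d}e^{i\lambda\Phi(y)}\tilde a(y,\lambda)\,dy$. Since the $y$‑support is compact, the uniform remainder above upgrades to $\|\tilde a(\cdot,\lambda)-\lambda^{m-d/2}\tilde a_0\|_{L^1(dy)}=O(\lambda^{m-d/2-1})=o(\lambda^{m-d/2})$, where $\tilde a_0(y)=(2\pi)^{d/2}e^{\pi i\sigma/4}|\det\phi''_{xx}(x(y),y)|^{-1/2}a_0(x(y),y)$. This is exactly the hypothesis of Lemma \ref{very stationary phase lemma} with conic variable $\lambda$, phase $\Phi$, and amplitude $\tilde a$, and the set $C_t$ it produces --- the points near which $\Phi=t$ on a positive‑measure set --- is the $C_t$ of the present statement. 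Applying that lemma and collecting constants gives the claimed formula.

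The step I expect to be the main obstacle is making the stationary‑phase reduction in $x$ genuinely \emph{uniform} in the parameter $y$, strong enough that the remainder is controlled in $L^1(dy)$ and not merely pointwise; this rests on the critical set of $\phi(\cdot,y)$ being clean and cut out by the single smooth section $x=x(y)$, together with a uniform lower bound for $|\det\phi''_{xx}|$ on $\supp a$. The rest is bookkeeping: the subleading stationary‑phase terms and the $S^{m-1}$ part $r$ need no cancellation or oscillation, only a power count, since the compact $y$‑integration cannot amplify them past $O(\lambda^{m-d/2-1})$; and, as in Remark \ref{very stationary remark}, $C_t$ differs from $\Phi^{-1}(t)$ only by a null set and the sum over $t$ is countable and absolutely convergent, so the right‑hand side is well defined.
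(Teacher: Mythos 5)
Your proposal is correct and is precisely the paper's argument: nondegenerate stationary phase in $x$ (Lemma \ref{H lemma}), carrying $y$ as a parameter, followed by Lemma \ref{very stationary phase lemma} applied in $y$ to the phase $\Phi(y)=\phi(x(y),y)$. The uniformity-in-$y$ point you flag as the main concern is handled exactly as you describe, by compact support together with the uniform nondegeneracy of $\phi''_{xx}$ on $\supp a$, which upgrades the pointwise stationary-phase remainder to the $L^1(dy)$ bound needed to invoke Lemma \ref{very stationary phase lemma}.
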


The proof of the lemma is a straightforward application of the method of stationary phase in the $x$-variables followed by an application of Lemma \ref{very stationary phase lemma} in the $y$-variables.

\begin{remark} \label{variable very stationary remark}
	Following Remark \ref{very stationary remark}, we have that
	\[
		C_t \subset \{ y : \text{ at } (x(y),y), \ \phi = t, \ \phi' = 0, \text{ and } \rank \phi'' = d\} \subset \{ y : \phi(x(y),y) = t \},
	\]
	and that the difference between any two of the sets has zero measure in $\R^n$.
\end{remark}

With Lemma \ref{variable very stationary phase lemma} in hand, we turn now to the proof of Lemma \ref{A lem}. Let $(y_0,\eta_0)$ be a point in the intersection of $\Lambda_{t_0} \cap \dot N^*H$ satisfying \eqref{maximal intersection}. Furthermore, we express $(y,\eta)$ in canonical local coordinates about $T^*M$ such that $y_0$ corresponds to $0$. We also write as before $y = (y',y'') \in \R^d \times \R^{n-d}$ so that $y'' = 0$ defines $H$. Then by \eqref{maximal intersection}, the projection $(y,\eta) \mapsto (y',\eta'')$ as a map from $\Lambda_t \to \R^d \times \R^{n-d}$ has surjective differential at $(t,y,\eta) = (t_0,y_0,\eta_0)$, and hence also for $(t,y,\eta)$ in some neighborhood of $(t_0,y_0,\eta_0)$. We then have that the projection $(t,y,\eta) \mapsto (t,y',\eta'')$ has surjective differential as a map $\Lambda \to \R \times \R^d \times \R^{n-d}$. Hence, $\Lambda$ is a Lagrangian section over the variables $(t,y',\eta'')$, and so there exists a positive-homogeneous function $\psi(t,y',\eta'')$ for which
\begin{equation}\label{A lem lagrangian}
	\{(t,\tau ,y,\eta) : y'' = \psi'_{\eta''}, \ \eta' = -\psi'_{y'}, \ \tau = -\psi_t' \}
\end{equation}
parametrizes $\Lambda$. In particular, this means that
\begin{equation} \label{t derivative of psi is p}
	\psi_t'(t,y',\eta'') = p(y,\eta) \qquad \text{ for each } (t,y',\eta'').
\end{equation}

\begin{lemma} \label{minimal rank lemma} For $(y,\eta) \in \Lambda_t \cap \dot N^*H$, the condition \eqref{maximal intersection} is satisfied if and only if $\rank \psi''(t,y',\eta'') = 2$.
\end{lemma}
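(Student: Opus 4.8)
The plan is to show that \emph{each} side of the claimed equivalence is equivalent to the vanishing of the ``spatial block'' of $\psi''$ at the point in question. Throughout, write the variables as $(t,y',\eta'')$ and let $\psi_0''$ denote the $n\times n$ symmetric matrix obtained from $\psi''(t,y',\eta'')$ by deleting the $t$-row and the $t$-column --- that is, the Hessian of $\psi$ in the $(y',\eta'')$ variables --- evaluated at the coordinates of the point $(y,\eta)$. I will also use that $\psi$ is homogeneous of degree $1$ in $\eta''$, which is forced by \eqref{A lem lagrangian} since $\Lambda$ is conic in $T^*(\R\times M)$.

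First I would reduce \eqref{maximal intersection} to the statement $\psi_0''=0$. Fixing $t$, \eqref{A lem lagrangian} exhibits $\Lambda_t$ as the image of the map $(y',\eta'')\mapsto(y',\psi'_{\eta''},-\psi'_{y'},\eta'')$, so $T_{(y,\eta)}\Lambda_t$ is the graph over the $(\delta y',\delta\eta'')$-plane of the linear map sending $(\delta y',\delta\eta'')$ to $(\psi''_{\eta''y'}\delta y'+\psi''_{\eta''\eta''}\delta\eta'',\ -\psi''_{y'y'}\delta y'-\psi''_{y'\eta''}\delta\eta'')$ in the $(\delta y'',\delta\eta')$-plane, whereas $T_{(y,\eta)}\dot N^*H$ is the graph of the zero map over the same plane. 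Two $n$-dimensional graphs over a common $n$-plane coincide iff the defining maps coincide, so \eqref{maximal intersection} holds at $(y,\eta)$ exactly when $\psi_0''=0$. It therefore remains to prove $\rank\psi''(t,y',\eta'')=2 \iff \psi_0''=0$.

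Next I would record two consequences of homogeneity, via Euler's identity applied to $\psi'_t,\psi'_{y'},\psi'_{\eta''}$: $\psi''_{t\eta''}\cdot\eta''=\psi'_t$, $\psi''_{y'\eta''}\,\eta''=\psi'_{y'}$, and $\psi''_{\eta''\eta''}\,\eta''=0$. By \eqref{t derivative of psi is p} we have $\psi'_t=p(y,\eta)>0$, and since $(y,\eta)\in\dot N^*H$ forces $\eta'=-\psi'_{y'}=0$, the gradient $\psi'_{y'}$ vanishes at our point. Hence, writing $\rho=(0,\eta'')\in\R^d\times\R^{n-d}$ for the radial spatial vector and $w=(\psi''_{ty'};\psi''_{t\eta''})$ for the $t$-cross column of $\psi''$, these identities say $\psi_0''\rho=0$ while $w\cdot\rho=p\neq0$; in particular $w\notin\operatorname{col}\psi_0''$ (any $w=\psi_0''z$ would give $w\cdot\rho=z^{T}\psi_0''\rho=0$).

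Finally, writing $\psi''$ in block form with respect to the splitting $t\mid(y',\eta'')$: its bottom $n$ rows form $[\,w\ \ \psi_0''\,]$, of rank $\rank\psi_0''+1$ because $w\notin\operatorname{col}\psi_0''$, and its top row $(\psi''_{tt},w^{T})$ cannot lie in the span of the bottom rows --- that span is $\{(w^{T}z,(\psi_0''z)^{T})\}$ and $\psi_0''z=w$ is impossible --- so $\rank\psi''\ge\rank\psi_0''+2$. When $\psi_0''=0$ this lower bound equals $2$ and is attained since $w\neq0$; conversely $\rank\psi''=2$ forces $\rank\psi_0''=0$. Combined with the reduction above, this proves the lemma. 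The only delicate point is the sharpened bound $\rank\psi''\ge\rank\psi_0''+2$: the obvious inequality $\rank\psi''\ge\rank\psi_0''$ would yield only ``$\rank\psi''\le 2$ iff \eqref{maximal intersection}'', not the stated equality, and the improvement rests precisely on the two homogeneity facts --- that $\rho$ annihilates $\psi_0''$ but pairs nontrivially against $w$.
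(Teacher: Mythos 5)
Your proof is correct and follows essentially the same approach as the paper: both arguments reduce \eqref{maximal intersection} to vanishing of the spatial Hessian block via the graph description of $T\Lambda_t$, then invoke Euler's identities for the degree-$1$ homogeneity of $\psi$ in $\eta''$ to control the $t$-row and radial direction and deduce the rank count. The only difference is stylistic: you work coordinate-free with the radial vector $\rho=(0,\eta'')$ and establish the clean identity $\rank\psi''=\rank\psi_0''+2$, whereas the paper rotates coordinates so that $\eta''$ aligns with the last axis and reads off the same conclusion from the resulting block structure of $\psi''$.
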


\begin{proof}
We require some convenient coordinates about $(y,\eta)$. First, we let the coordinate vectors $\partial_{y_{d+1}}, \ldots, \partial_{y_n}$ form an orthonormal frame of vectors normal to $H$. Then if $\eta' = 0$, as it is when $(y,\eta) \in \dot N^*H$, we have
\[
	p(y,\eta) = |\eta''|.
\]
Furthermore, we assume by an appropriate rotation of the coordinates that
\[
	\eta'' = (0,\ldots,0,|\eta''|).
\]
We observe from \eqref{A lem lagrangian} that
\[
	\Lambda_t = \{(y,\eta) : y'' = \psi_{\eta''}' \text{ and } \eta' = - \psi_{y'}' \},
\]
where all functions on the right are evaluated at $t$. Now, if $(y,\eta) \in \Lambda_t \cap \dot N^*H$, then \eqref{A lem lagrangian} implies
\[
	\psi'_{\eta''} = 0 \qquad \text{ and } \qquad \psi_{y'}' = 0. 
\]
In what follows, we will write $\tilde \eta'' = (\eta_{d+1} ,\ldots, \eta_{n-1})$ to be the vector with all but the last coordinate of $\eta''$. By the homogeneity of $\psi$, we have
\begin{align*}
	\psi''_{t \eta_n} &= \partial_{\eta_n} p(y, \eta) = 1, \\
	\psi''_{y' \eta_n} &= \psi_{y'}' = 0, \\
	\psi''_{\tilde \eta'' \eta_n} &= \psi'_{\eta''} = 0, \text{ and } \\
	\psi''_{\eta_n \eta_n} &= 0.
\end{align*}
We proceed with this information in hand.

By differentiating the conditions in $\Lambda_t$, we find that $T_{(y,\eta)} \Lambda_t$ is the set of vectors $(\partial y, \partial \eta)$ satisfying
\[
	\begin{bmatrix}
			\partial y'' \\
			\partial \eta'
	\end{bmatrix}
	=
	\begin{bmatrix}
			\psi''_{\eta'' y'} & \psi''_{\eta'' \eta''} \\
			-\psi''_{y' y'} & -\psi''_{y' \eta''}
	\end{bmatrix}
	\begin{bmatrix}
			\partial y' \\
			\partial \eta''
	\end{bmatrix}.
\]
Note, \eqref{maximal intersection} is satisfied if and only if $\partial y'' = 0$ and $\partial \eta' = 0$, and hence if and only if matrix on the right vanishes. But in our chosen coordinates, the matrix is written
\[
	\begin{bmatrix}
			\psi''_{\tilde \eta'' y'} & \psi''_{\tilde \eta'' \tilde \eta''} & \psi''_{\tilde \eta'' \eta_n} \\
			\psi''_{\eta_n y'} & \psi''_{\eta_n \tilde \eta''} & \psi''_{\eta_n \eta_n} \\
			-\psi''_{y' y'} & -\psi''_{y' \tilde \eta''} & \psi''_{y' \eta_n}
	\end{bmatrix}
	=
	\begin{bmatrix}
			\psi''_{\tilde \eta'' y'} & \psi''_{\tilde \eta'' \tilde \eta''} & 0 \\
			0 & 0 & 0 \\
			-\psi''_{y' y'} & -\psi''_{y' \tilde \eta''} & 0
	\end{bmatrix},
\]
and hence
\[
	\text{\eqref{maximal intersection} is satisfied if and only if } \begin{bmatrix}
		\psi''_{y'y'} & \psi''_{y' \tilde \eta''} \\
		\psi''_{\tilde \eta'' y'} & \psi''_{\tilde \eta'' \tilde \eta''}
	\end{bmatrix}
	=0.
\]
At the same time, we have Hessian matrix
	\[
		\psi'' = \begin{bmatrix}
			0 & p'_{y'} & p'_{\eta''} \\
			p'_{y'} & \psi''_{y' y'} & \psi''_{y' \eta''}  \\
			p'_{\eta''} & \psi''_{\eta'' y'} & \psi''_{\eta'' \eta''}
		\end{bmatrix},
	\]
	which, when written in our chosen coordinates, reads as
	\[
		\begin{bmatrix}
			0 & 0 & 0 & 1 \\
			0 & \psi''_{y' y'} & \psi''_{y' \tilde \eta''} & 0  \\
			0 & \psi''_{\tilde \eta'' y'} & \psi''_{\tilde \eta'' \tilde \eta''} & 0 \\
			1 & 0 & 0 & 0
		\end{bmatrix},
	\]
	and so has rank $2$ if and only if the center block vanishes.
\end{proof}

We are now ready to express the kernel of $S_A$ in local oscillatory form in a conic neighborhood of $\zeta_0 = (t_0, p(y_0,\eta_0), y_0, \eta_0)$. We have, modulo a smooth error,
\[
	S_A(t,y) = (2\pi)^{-\frac14 - \frac{3n}{4} + \frac{d}{2}} \left( \int_{\R^{n-d}} e^{i(\langle y'', \eta'' \rangle - \psi(t,y',\eta''))} a(t,y,\eta'') \, d\eta'' \right) |dt|^{1/2} |dy|^{1/2}.
\]
One quickly checks that $\phi(t,y,\eta'') = \langle y'', \eta'' \rangle - \psi(t,y',\eta'')$ is a nondegenerate homogeneous phase function parametrizing $\Lambda$.

We now resolve the principal part of the symbol $a$.

\begin{lemma} \label{local S_A}
	The symbol $a$ in the local expression of $S_A(t,y)$ above is in class $S^0$ and has principal homogeneous part
	\[
		a(t,y,\eta'') = i^{\sigma_{-t}} (2\pi)^{-\frac n 4 + \frac d 2 + \frac 14} \sqrt{J_{-t}(y,\eta)} \frac{|g_H(y')|^\frac12}{|g(y)|^\frac14}
	\]
	at $(t,y,\eta)$ satisfying \eqref{maximal intersection}, and where $\sigma_{-t}$ is the corresponding Maslov factor.
\end{lemma}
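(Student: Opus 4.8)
The plan is to read off $a$ by matching the local oscillatory representation of $S_A$ against the coordinate-free principal symbol of $U(\delta_H)$ supplied by Lemma \ref{U symbolic data}. The representation uses $N=n-d$ phase variables $\eta''$ over a base $\R\times M$ of dimension $n+1$, so $\ord S_A=\ord a+\tfrac{n-d}{2}-\tfrac{n+1}{4}$; comparing with $\ord S_A=\ord U(\delta_H)=\tfrac n4-\tfrac d2-\tfrac14$ forces $\ord a=0$, and the prefactor $(2\pi)^{-\frac14-\frac{3n}{4}+\frac d2}$ is exactly Hörmander's normalization $(2\pi)^{-((n+1)+2(n-d))/4}$ for $I^{*}(\R\times M,\Lambda)$, consistent with the convention used in Lemma \ref{delta_H symbol}. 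With that normalization the half-density part of the principal symbol of $S_A$ at a point of $\Lambda$ equals $a_{\mathrm{pr}}\,e^{i\pi\sigma_\phi/4}\sqrt{d_\phi}$, where $\phi(t,y,\eta'')=\langle y'',\eta''\rangle-\psi(t,y',\eta'')$, where $d_\phi$ is the Leray density on $C_\phi=\{\phi'_{\eta''}=0\}=\{y''=\psi'_{\eta''}\}$ transported to $\Lambda$ via $(t,y,\eta'')\mapsto(t,\phi'_t,y,\phi'_y)$, and where $\sigma_\phi=\sgn\phi''_{\eta''\eta''}$ on $C_\phi$. Since $\partial_{y''}\phi'_{\eta''}=I_{n-d}$, the map $(t,y',y'',\eta'')\mapsto(t,y',\eta'',\phi'_{\eta''})$ has unimodular Jacobian, so $d_\phi=|dt\,dy'\,d\eta''|$, the coordinate density in the section coordinates $(t,y',\eta'')$ on $\Lambda$.

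Next I would compare this with Lemma \ref{U symbolic data}, which gives the half-density part of $\sigma(U(\delta_H))$ as $(2\pi)^{-\frac n4+\frac d2+\frac14}\,|dt|^{1/2}\,\tfrac{|g_H(x')|^{1/2}}{|g(x)|^{1/4}}\,|dx'\,d\xi''|^{1/2}$, written in the coordinates $(t,x',\xi'')$ with $(x,\xi)\in\dot N^*H$ and the $\Lambda$-point $(t,-p(x,\xi),G^t(x,\xi))$. Matching the two descriptions of the same half-density on $\Lambda$ requires the Jacobian of the change of coordinates $(t,x',\xi'')\mapsto(t,y',\eta'')$, which for fixed $t$ is the composite $\dot N^*H\xrightarrow{G^t}\Lambda_t\to(y',\eta'')$. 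Here is where \eqref{maximal intersection} does the work: at a point where $T_{(y,\eta)}\Lambda_t=T_{(y,\eta)}\dot N^*H$, the projection $\Lambda_t\to(y',\eta'')$ restricts on $T_{(y,\eta)}\Lambda_t$ to the identity of $T_{(y,\eta)}\dot N^*H$ in the coordinates $(y',\eta'')$ (which parametrize $\dot N^*H$ as well), so the Jacobian of the change of coordinates equals $|\det(dG^t|_{T\dot N^*H})|$ in the $(x',\xi'')/(y',\eta'')$ coordinate frames. By the defining identity \eqref{def J} this determinant is $J_t(x,\xi)\,\tfrac{|g_H(x')|/|g(x)|^{1/2}}{|g_H(y')|/|g(y)|^{1/2}}$.

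Now I equate $a_{\mathrm{pr}}\sqrt{d_\phi}$ with the half-density of Lemma \ref{U symbolic data}: writing $\sqrt{d_\phi}=|dt|^{1/2}\big(J_t(x,\xi)\,\tfrac{|g_H(x')|/|g(x)|^{1/2}}{|g_H(y')|/|g(y)|^{1/2}}\big)^{1/2}|dx'\,d\xi''|^{1/2}$ and cancelling, the $\dot N^*H$-density at $(x,\xi)$ drops out and
\[
a_{\mathrm{pr}}=(2\pi)^{-\frac n4+\frac d2+\frac14}\,\frac{|g_H(y')|^{1/2}}{|g(y)|^{1/4}}\,J_t(x,\xi)^{-1/2}\cdot(\text{Maslov factor}).
\]
By the multiplicativity $J_{s+t}(x,\xi)=J_s(G^t(x,\xi))\,J_t(x,\xi)$ established in the proof of Lemma \ref{group-ish}, taking $s=-t$ and using $J_0=1$ gives $J_t(x,\xi)^{-1}=J_{-t}(y,\eta)$; moreover $\eqref{maximal intersection}\Leftrightarrow\eqref{structured looping direction}$ applied to $G^{-t}(y,\eta)=(x,\xi)$ guarantees that $J_{-t}(y,\eta)$ is defined exactly on the locus where the formula is asserted. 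This yields the stated formula up to the $i^{\sigma_{-t}}$ factor, which is the product of $e^{i\pi\sigma_\phi/4}$ with the section of the Maslov line bundle carried by the principal symbol of $U(\delta_H)$; by definition this combination is $i^{\sigma_{-t}}$, and its additivity $\sigma_s(G^t(x,\xi))+\sigma_t(x,\xi)\equiv\sigma_{s+t}(x,\xi)\bmod 4$ — the cocycle property needed to make $\sigma_{-t}$ unambiguous and to feed Lemma \ref{group-ish} — is precisely the content of Remark \ref{maslov remark}.

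I expect the Maslov bookkeeping to be the main obstacle: the invariant symbol of $U(\delta_H)$ is a section of a half-density bundle twisted by the Maslov line bundle over $\Lambda$, and pinning down the exact fourth root of unity in the parametrization $\phi=\langle y'',\eta''\rangle-\psi$, together with verifying the additivity law, takes genuine care and is why it is deferred to Remark \ref{maslov remark}. A secondary point requiring attention is the reduction in the third paragraph: one must check that it is the tangential condition \eqref{maximal intersection}, not mere set-theoretic intersection $\Lambda_t\cap\dot N^*H$, that collapses the abstract change-of-coordinates Jacobian on $\Lambda$ to the dynamical quantity $J_t$, and that off this locus the symbol $a$ is only determined modulo the usual phase-function equivalence and need not equal the displayed formula.
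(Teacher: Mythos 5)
Your proposal is correct and follows essentially the same route as the paper's proof: compute the Leray density $d_\phi=|dt\,dy'\,d\eta''|$ from the phase $\phi=\langle y'',\eta''\rangle-\psi$, read off $\ord a=0$ by matching $\ord S_A$ with $\ord U(\delta_H)$ from Lemma \ref{U symbolic data}, then equate the resulting half-density $a\sqrt{d_\phi}$ with the invariant principal symbol of $U(\delta_H)$ and use \eqref{def J} (together with $J_t(x,\xi)^{-1}=J_{-t}(y,\eta)$) to convert the $(x',\xi'')$-density at $(x,\xi)\in\dot N^*H$ into the $(y',\eta'')$-density at $(y,\eta)$, deferring the Maslov exponent to Remark \ref{maslov remark}. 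Your added remark that \eqref{maximal intersection} is exactly what makes the projection $\Lambda_t\to(y',\eta'')$ act as the identity on $T_{(y,\eta)}\dot N^*H$ — so the abstract reparametrization Jacobian collapses to the dynamical $J_t$ — is a correct and welcome elaboration of a step the paper handles tersely with the phrase ``Reversing \eqref{def J}.''
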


\begin{proof}
	Let $\phi(t,y,\eta'') = \langle y'', \eta'' \rangle - \psi(t,y',\eta'')$ be the phase function with critical set defined by $y'' - \psi_{\eta''}' = 0$, parametrized by $t,y',\eta''$, which has Leray density
	\begin{align*}
		d_{\phi} &= \left|\frac{d(t,y',\eta'',\phi_{\eta''}')}{d(t,y',y'',\eta'')}\right|^{-1} |dt \, dy' \, d\eta''| \\
		&= \left| \det \begin{bmatrix}
			1 & 0 & 0 & 0 \\
			0 & I & 0 & 0 \\
			0 & 0 & 0 & I \\
			* & * & I & *
		\end{bmatrix} \right|^{-1} |dt \, dy' \, d\eta''| \\
		&= |dt \, dy' \, d\eta''|.
	\end{align*}
	and where the principal symbol of the kernel of $S_A$ is given by
	\[
		a \sqrt{d_\phi} = a |dt \, dy' \, d\eta''|^{1/2} \qquad \text{ for } y'' = \psi_{\eta''}'.
	\]
	We observe that
	\[
		\ord S_A = \ord a - \frac{n+1}{4} + \frac{n-d}{2} = \ord a + \frac{n}{4} - \frac{d}{2} - \frac14.
	\]
	Since $\ord S_A = \ord U(\delta_H) = \frac{n}{4} - \frac{d}{2} - \frac14$ by Lemma \ref{U symbolic data}, we conclude $\ord a = 0$.
	
	Next, suppose $(y,\eta) = G^t(x,\xi)$ with $(x,\xi) \in SN^*H$ and satisfies \eqref{maximal intersection}, i.e. satisfying \eqref{structured looping direction}. Recalling Lemma \ref{U symbolic data}, this symbol coincides with (modulo a Maslov factor),
	\[
		 (2\pi)^{-\frac n 4 + \frac d 2 + \frac 14} \frac{|g_H(x')|^{1/2}}{|g(x)|^{1/4}} |dt \, dx' \, d\xi''|^{1/2}.
	\]
	Reversing \eqref{def J}, we find that
	\[
		\frac{|g_H(x')|^{1/2}}{|g(x)|^{1/4}} |dx' \, d\xi''| = \sqrt{J_{-t}(y,\eta)} \frac{|g_H(y')|^{1/2}}{|g(y)|^{1/4}} |dy' \, d\eta''|.
	\]
	Hence,
	\[
		a(t,y,\eta'') |dt \, dy' \, d\eta''|^\frac12 = (2\pi)^{-\frac n 4 + \frac d 2 + \frac 14} \sqrt{J_{-t}(y,\eta)} \frac{|g_H(y')|^\frac12}{|g(y)|^\frac14} |dt \, dy' \, d\eta''|^{1/2},
	\]
	and we have
	\[
		a(t,y,\eta'') = (2\pi)^{-\frac n 4 + \frac d 2 + \frac 14} \sqrt{J_{-t}(y,\eta)} \frac{|g_H(y')|^\frac12}{|g(y)|^\frac14}
	\]
	as desired.
\end{proof}

At this point, it is convenient to select local coordinates $y$ so that $y'' = 0$ defines $H$, and furthermore the coordinate vectors $\partial_{y_{d+1}}, \ldots, \partial_{y_n}$ form an orthonormal frame perpendicular to $H$. Then,
\begin{equation} \label{local coordinate p}
	p(y,\eta) = |\eta''| \qquad \text{ for } (y,\eta) \in \dot N^*H.
\end{equation}
and furthermore, since
\[
	g(y) = \begin{bmatrix}
		g_H(y') & 0 \\
		0 & I
	\end{bmatrix} \qquad \text{ for } y'' = 0,
\]
we have
\[
	\frac{|g_H(y')|^\frac12}{|g(y)|^\frac14} = |g_H(y')|^\frac14.
\]
Now, we write locally
\[
	S_A(\delta_H)(t) = (2\pi)^{-\frac14 - \frac{3n}{4} + \frac{d}{2}} \left(\int_{\R^d} \int_{\R^{n-d}} e^{-i \psi(t,y',\eta'')} a(t,y,\eta'') |g_H(y')|^\frac14 \, d\eta'' \, dy' \right) |dt|^\frac12,
\]
where $y = (y',0)$. (There is a minor lie in this equation, since we have omitted writing a sum over a partition of unity.) Now, the contribution of $S_A$ to $\chi * N(\lambda)$ is
\begin{align*}
	&(2\pi)^{-1} \int_\R e^{it\lambda} \widehat \chi(t) S_A(\delta_H)(t) \, dt \\
	&= (2\pi)^{-\frac54 - \frac{3n}{4} + \frac{d}{2}} \int_\R \int_{\R^d} \int_{\R^{n-d}} e^{i(\lambda t - \psi(t,y',\eta''))} a(t,y,\eta'') \widehat \chi(t) |g_H(y')|^\frac14 \, d\eta'' \, dy' \, dt\\
	&= (2\pi)^{-\frac54 - \frac{3n}{4} + \frac{d}{2}} \lambda^{n-d} \int_\R \int_{\R^d} \int_{\R^{n-d}} e^{i\lambda (t - \psi(t,y',\eta''))} a(t,y,\lambda \eta'') \widehat \chi(t) |g_H(y')|^\frac14 \, d\eta'' \, dy' \, dt.
\end{align*}
We change to polar coordinates $\eta'' = r \omega$ with $r > 0$ and $\omega \in S^{n-d-1}$ and obtain
\begin{multline} \label{tr stationary phase prep}
	= (2\pi)^{-\frac54 - \frac{3n}{4} + \frac{d}{2}} \lambda^{n-d} \int_\R \int_{\R^d} \int_0^\infty \int_{S^{n-d-1}} e^{i\lambda \phi(t,r,y',\omega)} a(t,y,\lambda r \omega) r^{n-d-1} \\
	\times \widehat \chi(t) |g_H(y')|^\frac14 \, d\omega \, dr \, dy' \, dt
\end{multline}
where we have set
\[
	\phi(t,r,y',\omega) = t - r\psi(t,y',\omega).
\]
We now aim to apply Lemma \ref{variable very stationary phase lemma}, where $(t,r)$ constitute the nondegenerate variables and $(y',\omega)$ the rest. Fixing $y'$ and $\omega$ and considering $\phi$ as a function of $t$ and $r$, we find that at a critical point
\[
	\phi'_{(t,r)} = \begin{bmatrix}
		1 - r\psi'_t \\
		\psi
	\end{bmatrix} = 0, 
\]
we have a Hessian
\[
	\phi''_{(t,r)} = \begin{bmatrix}
		* & \psi'_t \\
		\psi'_t & 0
	\end{bmatrix}
\]
which is nondegenerate (recall \eqref{t derivative of psi is p}). Hence, after perhaps refining our partition of unity, we ensure all such $(t,r)$ critical points lie on a single implicitly-defined curve $(t(y',\omega), r(y',\omega))$. 

We note that, at a critical point of $\phi$, we have $\phi = t$. So, for fixed $t_0$, we have by Remark \ref{variable very stationary remark}
\[
	C_{t} \subset \{(y',\omega) : \phi = t_0, \ \phi' = 0, \text{ and } \rank \phi'' = 2 \},
\]
and the difference between these two sets has measure zero in $\R^d \times S^{n-d-1}$. By Lemma \ref{minimal rank lemma}, $(y',\omega) \in C_t$ if and only if the corresponding data $(t,y,\eta)$ satisfies \eqref{maximal intersection}, and hence also $G^{-t}(y,\eta)$ belongs to $\Sigma_t$ of \eqref{structured looping set}. Equivalently, $(y',\omega)$ belongs to $S \Sigma_{-t}$. Hence,
\[
	C_t \subset S \Sigma_{-t}
\]
with the difference being measure $0$.
By our selection of local coordinates, we have that $r^{-1} = p(y',\omega) = 1$, and hence
\[
	|\det \phi''_{(t,r)}| = 1 \qquad \text{ and } \qquad \sgn \phi''_{(t,r)} = 0.
\]
By Lemma \ref{variable very stationary phase lemma}, and recalling that $a$ is a symbol of order $0$, we write \eqref{tr stationary phase prep} as
\begin{align*}
	&(2\pi)^{-\frac14 - \frac{3n}{4} + \frac{d}{2}} \lambda^{n-d-1} \sum_t e^{it\lambda} \iint_{C_t} a(t,y,\lambda \omega) \widehat \chi(t) |g_H(y')|^\frac14 \, d\omega \, dy' + o(\lambda^{n-d-1}) \\
	&= (2\pi)^{-n+d} \lambda^{n-d-1} \sum_t e^{it\lambda} \widehat \chi(t) \iint_{S \Sigma_{-t}} i^{\sigma_{-t}} \sqrt{J_{-t}(y,\eta)} |g_H(y')|^\frac12 \, d\omega \, dy' + o(\lambda^{n-d-1})\\
	&= \lambda^{n-d-1} \sum_t e^{it\lambda} \widehat \chi(t) q(-t) + o(\lambda^{n-d-1}).
\end{align*}
Lemma \ref{A lem} follows after reindexing the sum. 

\begin{remark}\label{maslov remark} We now remark on the nature of the Maslov factor $\sigma_t$ appearing in Lemma \ref{local S_A}. First, consider the distribution $S_A(t_0,y)$ in $y$ for fixed $t_0$, which on one hand is given locally by
\[
	S_A(t_0, y) = (2\pi)^{- \frac{3n}{4} + \frac{d}{2}} \left( \int_{\R^{n-d}} e^{(i\langle y'', \eta'' \rangle - \psi(t_0,y',\eta''))} (2\pi)^{-\frac14} a(t_0,y,\eta'') \, d\eta'' \right) |dy|^\frac12,
\]
and on the other hand is written globally as $e^{-it_0 P} \delta_H$. With a computation similar to that in Lemma \ref{delta_H symbol}, we have a principal symbol 
\[
	i^{\sigma_{-t_0}} (2\pi)^{-\frac n 4 + \frac d 2} \sqrt{J_{-t_0}} \frac{|g_H(y')|^\frac12}{|g(y)|^\frac14} 
\]
wherever $(y',\eta'')$ satisfies \eqref{maximal intersection}. 

Next, suppose that $(x,\xi) \in \Sigma_{t_0}$ and $G^{t_0}(x,\xi) = (y,\eta) \in \Sigma_{s_0}$. Then, $(x,\xi) \in \Sigma_{t_0 + s_0}$ and we write $G^{t_0+s_0}(x,\xi) = (z,\zeta)$. We then compute the principal symbol $a(t,z,\zeta'')$ in two different ways. First, by the computation above, we have
\[
	(2\pi)^{-\frac14} a(t_0 + s_0, z, \zeta'') = i^{\sigma_{-t_0 - s_0}} (2\pi)^{-\frac n 4 + \frac d 2} \sqrt{J_{-t_0 - s_0}} \frac{|g_H(z')|^\frac12}{|g(z)|^\frac14}.
\]
Second, we have
\[
	e^{-(t_0 + s_0)P} \delta_H = e^{-s_0 P} (e^{-t_0 P} \delta_H)
\]
and note that by \eqref{maximal intersection} the tangent space of the Lagrangian associated to $e^{-t_0 P} \delta_H$ agrees with that of $\dot N^*H$, and hence has principal symbol which is $i^{\sigma_{-t_0}} \sqrt{J_{-t_0}}$ times that of $\delta_H$ at $(y,\eta)$. By the calculus in e.g. \cite{HormanderIV} and another application of the calculation above, we also have the local principal symbol expression
\begin{align*}
	(2\pi)^{-\frac14} a(t_0 + s_0, z, \zeta'') &= i^{-\sigma_{-s_0}(z,\zeta)} a(t_0, y, \xi'') \\
	&= (2\pi)^{-\frac n 4 + \frac d 2} i^{-\sigma_{-s_0}(z,\zeta)} i^{\sigma_{-t_0}(y,\eta)} \sqrt{J_{-t_0}(y,\eta)} \frac{|g_H(y')|^\frac12}{|g(y)|^\frac14}\\
	&= (2\pi)^{-\frac n 4 + \frac d 2} i^{-\sigma_{-s_0}(z,\zeta)} i^{\sigma_{-t_0}(G^{-t_0}(z,\zeta))} \\
	&\qquad \times \sqrt{J_{-t_0}(G^{-t_0}(z,\zeta))} \sqrt{J_{-s_0}(z,\zeta)} \frac{|g_H(z')|^\frac12}{|g(z)|^\frac14}.
\end{align*}
This verifies Lemma \ref{group-ish} in the proof of Proposition \ref{Q nice}. Note that the proof of Proposition \ref{off zero singularities} only relies on the definition of $Q$ as a distribution and not that $Q$ is nearly monotonic or bounded.
\end{remark}


\section{Proof of Proposition \ref{Q nice}}
\label{Q NICE}

We first establish the following group-like structure. In what follows, we take $S\Sigma_0 = SN^*H$.

\begin{lemma}\label{group-ish} Let $s$ and $t$ be real numbers and $(x,\xi) \in S\Sigma_t$ and $G^t(x,\xi) \in S\Sigma_s$. Then, at $(x,\xi)$, we have
	\[
	i^{\sigma_{t+s}(x,\xi)} \sqrt{J_{t + s}(x,\xi)} = i^{\sigma_s(G^t(x,\xi))} \sqrt{J_s(G^t(x,\xi))} \cdot i^{\sigma_t(x,\xi)} \sqrt{J_s(x,\xi)}.
	\]
\end{lemma}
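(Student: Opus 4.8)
The plan is to establish the identity as a composition of linear-algebra facts about the differential of the geodesic flow, together with the cocycle property of the Maslov index. First I would unwind the definitions: by \eqref{structured looping direction}, the hypothesis $(x,\xi) \in S\Sigma_t$ and $G^t(x,\xi) \in S\Sigma_s$ means that $dG^t$ restricts to a linear isomorphism $T_{(x,\xi)}\dot N^*H \to T_{G^t(x,\xi)}\dot N^*H$ and $dG^s$ restricts to a linear isomorphism $T_{G^t(x,\xi)}\dot N^*H \to T_{G^{t+s}(x,\xi)}\dot N^*H$. Since $G^{t+s} = G^s \circ G^t$ on $\dot T^*M$, the chain rule gives $dG^{t+s} = dG^s \circ dG^t$ at $(x,\xi)$, and hence $dG^{t+s}$ also restricts to a linear isomorphism on the conormal tangent spaces; in particular $(x,\xi) \in S\Sigma_{t+s}$, so the left-hand side is defined and the statement makes sense. (I'd also fold in the convention $S\Sigma_0 = SN^*H$, $J_0 \equiv 1$, $\sigma_0 \equiv 0$ so the degenerate cases $s=0$ or $t=0$ are automatic.)

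Next I would treat the Jacobian factor. Because $J_t(x,\xi)$ is defined in \eqref{def J} as the (positive) determinant of the restricted map $dG^t$ with respect to the fixed natural densities on $\dot N^*H$ at the source and target points, the multiplicativity $J_{t+s}(x,\xi) = J_s(G^t(x,\xi)) \, J_t(x,\xi)$ follows immediately from $\det(B A) = \det(B)\det(A)$ applied to $A = dG^t|_{T\dot N^*H}$ and $B = dG^s|_{T\dot N^*H}$, the densities at the intermediate point $G^t(x,\xi)$ cancelling. Taking positive square roots gives $\sqrt{J_{t+s}} = \sqrt{J_s(G^t(\cdot))}\sqrt{J_t}$ at $(x,\xi)$; I note the typo in the statement, where the last factor should read $\sqrt{J_t(x,\xi)}$ rather than $\sqrt{J_s(x,\xi)}$, and this is exactly what the determinant cocycle delivers.

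The remaining ingredient is the Maslov factor, and this is where the real content lies. The exponents $\sigma_t$ are locally constant integers (mod $4$, since only $i^{\sigma_t}$ matters) attached to the Lagrangian path traced by $dG^\tau$ applied to the conormal data as $\tau$ runs from $0$ to $t$, relative to a reference Lagrangian (the vertical fiber / the conormal $N^*H$). I would invoke the standard cocycle/concatenation property of the Maslov index: the index of the concatenation of the path from $0$ to $t$ with the path from $t$ to $t+s$ (the latter being the $\tau \mapsto dG^{\tau - t}$ path based at $G^t(x,\xi)$) equals the sum of the two indices, up to the correction terms that enter the Hörmander–Kashiwara cocycle. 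The key point — and the main obstacle — is to check that these correction terms vanish here: this is precisely where one uses that $(x,\xi)$, $G^t(x,\xi)$, and $G^{t+s}(x,\xi)$ all lie in $\dot N^*H$ and that \eqref{structured looping direction} holds at each stage, so that all three Lagrangians involved meet the reference Lagrangian transversally (equivalently, the relevant intersection indices are trivial). Granting that, $\sigma_{t+s}(x,\xi) \equiv \sigma_s(G^t(x,\xi)) + \sigma_t(x,\xi) \pmod 4$, hence $i^{\sigma_{t+s}(x,\xi)} = i^{\sigma_s(G^t(x,\xi))} i^{\sigma_t(x,\xi)}$. Multiplying this with the square-root-Jacobian identity from the previous paragraph yields the claimed formula. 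I expect the delicate bookkeeping of the Maslov cocycle — pinning down the reference Lagrangian and verifying the vanishing of the Kashiwara correction under the nondegeneracy hypothesis \eqref{structured looping direction} — to be the only step requiring care; the Jacobian part is a one-line determinant computation.
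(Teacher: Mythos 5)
Your Jacobian argument coincides with the paper's (a one-line determinant cocycle), and you correctly spot the typo: both the statement and the paper's own proof write $\sqrt{J_s(x,\xi)}$ where $\sqrt{J_t(x,\xi)}$ is meant. The divergence is entirely in the Maslov part, and there you take a genuinely different route which, as written, has a gap at the decisive step.

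The paper does not treat $\sigma_t$ as a freestanding Maslov index to be handled with the H\"ormander--Kashiwara cocycle. Rather, $\sigma_t$ is characterized concretely through the FIO calculus in Lemma \ref{local S_A} as the phase appearing in the principal symbol of $e^{-itP}\delta_H$ at points satisfying \eqref{maximal intersection}, and the additivity $\sigma_{t+s}(x,\xi) = \sigma_s(G^t(x,\xi)) + \sigma_t(x,\xi) \bmod 4$ is obtained in Remark \ref{maslov remark} by computing that principal symbol in two ways --- once directly, and once via the factorization $e^{-i(t+s)P}\delta_H = e^{-isP}\bigl(e^{-itP}\delta_H\bigr)$ and the composition calculus of Lagrangian distributions --- and equating the results. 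The cocycle thus falls out of consistency of the symbol calculus, with no need to track the Kashiwara triple-index correction by hand. Your proposal instead appeals to the abstract concatenation property of the Maslov index and proposes to use the nondegeneracy hypothesis \eqref{structured looping direction} to kill the Kashiwara correction, but you leave that verification as ``Granting that\ldots'' while yourself flagging it as the only step requiring care. Since that step \emph{is} the content of the lemma, it cannot be waved through; and independently, you would need to reconcile the paper's FIO-defined $\sigma_t$ with the abstract signature-type index your argument manipulates. In short: the determinant part matches, the typo catch is a genuine contribution, but the Maslov cocycle is asserted rather than established, and the paper proves it by a different (symbol-consistency) mechanism that avoids the correction-term bookkeeping you were worried about.
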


\begin{proof}
	That $J_{t + s}(x,\xi) = J_s(G^t(x,\xi)) \cdot J_s(x,\xi)$ is clear. that
	\[
	\sigma_s(G^t(x,\xi)) + \sigma_t(x,\xi) = \sigma_{s + t}(x,\xi) \mod 4
	\]
	follows from Remark \ref{maslov remark} after the computation of $Q$ from the Fourier transform of $N'$.
\end{proof}

Let $S\Sigma_+ = \bigcup_{t > 0} S\Sigma_t$ and let $\mathbf T$ be the first-return time function on $S\Sigma_+$, namely
\[
\mathbf T(x,\xi) = \min \{ t > 0 : (x,\xi) \in S\Sigma_t \}.
\]
We then take an analytic family of operators $U_z : L^2(SN^*H) \to L^2(SN^*H)$ for $\Im z \leq 0$ given by
\[
U_z(f)(x,\xi) =
\begin{cases} e^{-iTz} i^{\sigma_\mathbf T} f \circ G^\mathbf T \sqrt{J_\mathbf T}, &\text{if }(x,\xi)\in SN^*H\cap S\Sigma_+,\\
	0,  &\text{if }(x,\xi)\in SN^*H\setminus S\Sigma_+.
\end{cases}
\]
If $\Im z = 0$, then the operator has norm $\|U_z\| \leq 1$. 
Otherwise if $\Im z < 0$, we have an operator norm bound $\|U_z\| < 1$ since the first-return time is uniformly bounded away from $0$.

\begin{lemma}\label{pos definite} For $\Im z < 0$, the operator
	\[
	I + \sum_{k = 1}^\infty (U_z^k + (U_z^k)^*)
	\]
	is positive-definite.
\end{lemma}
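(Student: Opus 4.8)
## Proof proposal for Lemma \ref{pos definite}

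The plan is to recognize the operator $I + \sum_{k\geq 1}(U_z^k + (U_z^k)^*)$ as (formally) $\operatorname{Re}\bigl((I - U_z)^{-1}\bigr)$ plus its adjoint contribution — more precisely, to write it as $(I-U_z^*)^{-1}\bigl[(I-U_z^*) + (I-U_z) - (I-U_z^*)(I-U_z)\bigr]\cdots$ — so let me instead organize the argument around the resolvent-type series directly. Since $\Im z < 0$ we have $\|U_z\| < 1$ by the hypothesis that the first-return time $\mathbf T$ is uniformly bounded below, so the Neumann series $\sum_{k\geq 0} U_z^k = (I - U_z)^{-1}$ converges in operator norm, and likewise $\sum_{k\geq 0}(U_z^*)^k = (I - U_z^*)^{-1}$. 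The first step is the purely algebraic identity
\[
	I + \sum_{k=1}^\infty \bigl(U_z^k + (U_z^*)^k\bigr) = (I - U_z)^{-1} + (I - U_z^*)^{-1} - I.
\]
The second step is to clear denominators: conjugating by $(I - U_z)$ on the right and $(I-U_z^*)$ on the left (both invertible), positivity of the left-hand side is equivalent to positivity of
\[
	(I - U_z^*)\bigl[(I-U_z)^{-1} + (I-U_z^*)^{-1} - I\bigr](I - U_z) = (I - U_z^*) + (I - U_z) - (I - U_z^*)(I - U_z),
\]
which simplifies to $I - U_z^* U_z$. So the lemma reduces to showing $\|U_z\| \leq 1$, which we already know — in fact $\|U_z\|<1$ for $\Im z<0$, giving strict positive-definiteness.

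The one genuine subtlety, and the step I expect to require real care rather than bookkeeping, is justifying that $U_z$ is a bounded operator with $\|U_z\| \le 1$ when $\Im z = 0$ (and $< 1$ when $\Im z < 0$) despite the fact that $G^{\mathbf T}$, $\sqrt{J_{\mathbf T}}$, and the Maslov factor $i^{\sigma_{\mathbf T}}$ are only defined on $S\Sigma_+ \subset SN^*H$ and $\mathbf T$ need not be continuous. The key point is the change-of-variables / measure-preservation property: the Jacobian factor $\sqrt{J_{\mathbf T}}$ is precisely the square root of the density-transformation factor from \eqref{def J}, so that for $f$ supported where the dynamics is defined,
\[
	\int_{SN^*H} \bigl| i^{\sigma_{\mathbf T}} (f\circ G^{\mathbf T}) \sqrt{J_{\mathbf T}} \bigr|^2
	= \int_{S\Sigma_+} |f\circ G^{\mathbf T}|^2\, J_{\mathbf T}
	\le \int_{SN^*H} |f|^2,
\]
where the last inequality uses that $G^{\mathbf T}$ is an injective map from $S\Sigma_+$ into $SN^*H$ (injectivity being forced by the group-like structure of Lemma \ref{group-ish}: if $G^{\mathbf T}$ sent two points to the same image, flowing backward by the common return time would identify them) pushing $J_{\mathbf T}\,d\mu$ forward to a measure dominated by $d\mu$. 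When $\Im z = 0$ the extra factor $e^{-i\mathbf T z}$ has modulus $1$ and does not affect the estimate; when $\Im z < 0$ it contributes $e^{\mathbf T \Im z} \le e^{-c|\Im z|}$ with $c = \inf \mathbf T > 0$, giving $\|U_z\| \le e^{-c|\Im z|} < 1$. Once $\|U_z\| < 1$ is in hand the rest is the algebra above, and $I - U_z^* U_z \ge (1 - \|U_z\|^2) I > 0$ completes the proof.

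One should double check that the $k$-fold compositions appearing in $U_z^k$ are consistent with the definition — i.e. that $U_z^k$ is given by the analogous formula with $\mathbf T$ replaced by the $k$-th return time and $\sqrt{J}$, $i^\sigma$ by the corresponding cocycle factors — which is exactly the content of Lemma \ref{group-ish} applied iteratively (the Jacobians multiply and the Maslov indices add mod $4$). This is routine given Lemma \ref{group-ish} but is what makes the algebraic identity in the first step legitimate at the level of these explicit operators rather than merely formally.
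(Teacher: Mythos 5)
Your argument is the same as the paper's: both rest on the Neumann-series identity $I + \sum_{k\geq 1}(U_z^k + (U_z^k)^*) = (I-U_z)^{-1} + (I-U_z^*)^{-1} - I$ followed by a factorization that exhibits a conjugate of $I - U_z^*U_z$ (the paper writes it as $(I-U_z)^{-1}(I - U_zU_z^*)(I-U_z^*)^{-1}$, an inessential variant). The extra care you devote to $\|U_z\| \le e^{\Im z \cdot \inf \mathbf T} < 1$ and the cocycle structure of $U_z^k$ is correct and thorough, but the paper treats these as already established in the text preceding the lemma.
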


The proof of this lemma is identical to the argument in the proof of \cite[Lemma 1.8.11]{Saf}, but we state it here anyways.

\begin{proof}
	We write
	\begin{align*}
		I + \sum_{k = 1}^\infty (U_z^k + (U_z^k)^*) &= -I + (I - U_z)^{-1} + (I - U_z^*)^{-1} \\
		&= (I - U_z)^{-1}(U_z + (I - U_z)(I - U_z^*)^{-1}) \\
		&= (I - U_z)^{-1}(I - U_z U_z^*)(I - U_z^*)^{-1},
	\end{align*}
	which is a conjugation of a positive-definite operator $I - U_z U_z^*$ by an invertible operator, and hence is also positive-definite.
\end{proof}

Lemma \ref{group-ish} with $s = -t$ implies $q(-t) = \overline{q(t)}$, and hence we write
\begin{equation}\label{Q' pos measure}
	Q'(\lambda) = \sum_{t\in\mathcal T} e^{-it\lambda} q(t) = 2 \Re \sum_{t\in\mathcal T\cap(0,\infty)} e^{-it \lambda} q(t) = \lim_{\epsilon \searrow 0} 2 \Re \sum_{t\in\mathcal T\cap(0,\infty)} e^{-it(\lambda - i\epsilon)} q(t)
\end{equation}
interpreted as a distribution in $\lambda$. By the same lemma and Fubini's theorem,
\[
\sum_{t\in\mathcal T\cap(0,\infty)} e^{-itz} q(t) = (2\pi)^{-n+d} \sum_{k = 1}^\infty \langle U^k_z \mathbf 1, \mathbf 1\rangle,
\]
where here $\mathbf 1$ denotes the constant $1$ function on $S\Sigma_+$. Hence for $\Im z < 0$,
\[
2 \Re \sum_{t\in\mathcal T\cap(0,\infty)} e^{-itz} q(t) = (2\pi)^{-n+d} \sum_{k = 1}^\infty \langle(U^k_z + U_z^{*k}) \mathbf 1, \mathbf 1\rangle.
\]
By Lemma \ref{pos definite}, we have
\[
(2\pi)^{-n+d} |SN^*H| + 2 \Re \sum_{t\in\mathcal T\cap(0,\infty)} e^{-itz} q(t) > 0 \qquad \Im z < 0.
\]
Recalling \eqref{Q' pos measure}, we find that in the limit $\epsilon \searrow 0$, we have that
\[
(2\pi)^{-n+d} |SN^*H| + Q'(\lambda) = (2\pi)^{-n+d} |SN^*H| + \sum_{t\in\mathcal T} e^{-it\lambda} q(t) \geq 0,
\]
and hence is a positive Radon measure. It follows that the function
\[
F(\lambda):=	(2\pi)^{-n+d} |SN^*H| \lambda + Q(\lambda) = (n-d)C_{H,M} \lambda + Q(\lambda)
\]
is a monotone increasing function in $\lambda$, as desired. 

To see that $Q$ is bounded, we select a nonnegative, even, Schwartz-class function $\rho$ with $\int \rho = 1$ with small Fourier support so that $\rho * Q = 0$. Then,
\[
F * \rho(\lambda) = (n-d)C_{H,M} \lambda \qquad \text{ and } \qquad F' * \rho(\lambda) = (n-d)C_{H,M}.
\]
Since $F$ is monotone increasing and tempered, we have by a basic Tauberian theorem such as \cite[Theorem B.2.1]{Saf} that
\[
F(\lambda) = (n-d)C_{H,M} \lambda + O(1),
\]
from which it follows that $Q$ is bounded.


\section{Proof of Results in Section \ref{implications}}

\subsection{Proof of Proposition \ref{recurrent}} 

We first put a metric $d$ on $SN^*H$ which is comparable to the Euclidean metric in each coordinate chart of $SN^*H$. Then, we define
\[
	\mathcal R^\delta = \{(x,\xi) \in SN^*H : d((x,\xi), G^t(x,\xi)) < \delta \text{ for some $t \neq 0$} \}.
\]
We note that $\mathcal R^\delta$ is a monotone-decreasing family of sets as $\delta$ decreases to $0$, and hence for each $\epsilon > 0$, there exists $\delta$ for which
\[
	|\mathcal R^\delta| < \frac\epsilon 2,
\]
where the measure is taken with respect to the natural measure on $SN^*H$ discussed in the introduction. We then take an open neighborhood $U$ of $\mathcal R^\delta$ with $|U| < \epsilon$. We claim there exists an integer $N_\delta$ depending on $\delta$ such that, for each $(x,\xi)$ in the complement $U^c$ of $U$,
\[
	\#\{ t \neq 0 : G^t(x,\xi) \in U^c \} \leq N_\delta.
\]
To see this, cover $U^c$ by a minimal collection of balls $B_1,\ldots,B_{N_\delta}$ of radius $\delta/2$. Note, if $(x,\xi) \in B_j\cap U^c$, then $d((x,\xi), G^t(x,\xi)) \geq \delta$ for all $t \neq 0$, and hence $G^t(x,\xi) \not\in B_j$ for any $t \neq 0$. It follows then that, given any choice of $(x,\xi)$, $G^t(x,\xi)$ belongs to each of $B_1\cap U^c,\ldots,B_{N_\delta}\cap U^c$ at at most one time each. Since $B_1,\ldots,B_{N_\delta}$ covers $U^c$, the claim follows. We will also repeatedly use the following lemma.

\begin{lemma}\label{cauchy-schwarz} For each $t \neq 0$ and nonnegative measurable functions $f$ and $g$ on $S\Sigma_t$,
\[
	\int_{S\Sigma_t} fg \sqrt{J_t} \leq \left( \int_{S\Sigma_t} f^2 \right)^\frac12 \left( \int_{S\Sigma_{-t}} (g \circ G^{-t})^2 \right)^\frac12.
\]
\end{lemma}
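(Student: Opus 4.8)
The plan is to apply the Cauchy--Schwarz inequality after absorbing $\sqrt{J_t}$ into one of the two factors, and then to recognize the resulting weighted integral over $S\Sigma_t$ as an integral over $S\Sigma_{-t}$ via the change of variables $(x,\xi)\mapsto G^t(x,\xi)$.

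First I would record that $G^t$ restricts to a bijection $G^t\colon S\Sigma_t\to S\Sigma_{-t}$ with inverse $G^{-t}$: if $(x,\xi)\in S\Sigma_t$ then $G^t(x,\xi)\in\dot N^*H$ by definition, $p(G^t(x,\xi))=p(x,\xi)=1$, and $dG^{-t}$ is the inverse of the isomorphism $dG^t$ of \eqref{structured looping direction}, so $G^t(x,\xi)\in S\Sigma_{-t}$; exchanging the roles of $t$ and $-t$ gives the reverse inclusion. Next, by the density identification \eqref{def J} --- applied on $SN^*H$ with its Leray measure, which is legitimate since $G^t$ commutes with $p$ and with the fiber dilations, so that $J_t$ is equally the Jacobian factor for the Leray measure --- the map $G^t\colon S\Sigma_t\to S\Sigma_{-t}$ transforms the natural measure by $J_t$. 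Concretely, for any nonnegative measurable $h$ on $S\Sigma_{-t}$,
\[
	\int_{S\Sigma_{-t}} h = \int_{S\Sigma_t} (h\circ G^t)\,J_t .
\]
Taking $h=(g\circ G^{-t})^2$ and using $G^{-t}\circ G^t=\mathrm{id}$ yields $\int_{S\Sigma_{-t}}(g\circ G^{-t})^2=\int_{S\Sigma_t} g^2 J_t$.

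Finally, writing $fg\sqrt{J_t}=f\cdot(g\sqrt{J_t})$ on $S\Sigma_t$ and applying Cauchy--Schwarz,
\[
	\int_{S\Sigma_t} fg\sqrt{J_t} \le \Big(\int_{S\Sigma_t} f^2\Big)^{1/2}\Big(\int_{S\Sigma_t} g^2 J_t\Big)^{1/2} = \Big(\int_{S\Sigma_t} f^2\Big)^{1/2}\Big(\int_{S\Sigma_{-t}}(g\circ G^{-t})^2\Big)^{1/2},
\]
which is the assertion. The only point requiring care is the change-of-variables identity --- that $J_t$ is precisely the Jacobian of $G^t\colon S\Sigma_t\to S\Sigma_{-t}$ with respect to the Leray measures --- but this is built into the definition \eqref{def J} once one notes that $G^t$ preserves the hypersurface $p=1$, so there is no real obstacle; the chain-rule relation $J_t(x,\xi)\,J_{-t}(G^t(x,\xi))=1$, immediate from \eqref{def J}, confirms consistency.
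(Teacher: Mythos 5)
Your proof is correct and follows essentially the same route as the paper's: apply Cauchy--Schwarz to $f\cdot(g\sqrt{J_t})$ on $S\Sigma_t$ and then convert $\int_{S\Sigma_t} g^2 J_t$ into $\int_{S\Sigma_{-t}}(g\circ G^{-t})^2$ by the change of variables $(y,\eta)=G^t(x,\xi)$. The only difference is that you spell out the bijection $G^t\colon S\Sigma_t\to S\Sigma_{-t}$ and the Jacobian identity in more detail than the paper, which simply cites the change of variables.
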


\begin{proof} By Cauchy-Schwarz, the left side is bounded by
\[
	\left( \int_{S\Sigma_t} f^2 \right)^\frac12 \left( \int_{S\Sigma_{t}} g^2 J_t \right)^\frac12.
\]
The lemma follows after performing a change of variables $(x,\xi)$ to $(y,\eta)$ via $(y,\eta) = G^t(x,\xi)$ on the rightmost integral.
\end{proof}

Now,
\[
	|q(t)| \leq \int_{S\Sigma_t} \mathbf 1_{U} \sqrt{J_t} + \int_{S\Sigma_t} \mathbf 1_{U^c} (\mathbf 1_{U} \circ G^t) \sqrt{J_t} + \int_{S\Sigma_t} \mathbf 1_{U^c} (\mathbf 1_{U^c} \circ G^t) \sqrt{J_t}.
\]
Applying Lemma \ref{cauchy-schwarz} to the first integral yields
\[
	\int_{S\Sigma_t} \mathbf 1_{U} \sqrt{J_t} \leq |U|^\frac12 |SN^*H|^\frac12 < \epsilon^\frac12 |SN^*H|^\frac12.
\]
Another application to the second integral yields
\[
	\int_{S\Sigma_t} \mathbf 1_{U^c} (\mathbf 1_{U} \circ G^t) \sqrt{J_t} \leq \left( \int_{S\Sigma_t} \mathbf 1_{U^c} \right)^\frac12 \left( \int_{S\Sigma_{-t}} \mathbf 1_{U} \right)^\frac12 \leq |SN^*H|^\frac12 \epsilon^\frac12.
\]
It suffices now to show that
\[
	\lim_{T \to \infty} \frac1T \sum_{t\in\mathcal T\cap[-T,T]} \int_{S\Sigma_t} \mathbf 1_{U^c} (\mathbf 1_{U^c} \circ G^t) \sqrt{J_t} = 0,
\]
since then we will have shown the limit (more rigorously, the limit supremum) in \eqref{averaging condition} is bounded by a constant times $\epsilon^\frac12$, where here $\epsilon > 0$ is arbitrarily small. By the lemma and the inequality $ab \leq \frac12 a^2 + \frac12 b^2$, we have
\begin{align*}
	\int_{S\Sigma_t} \mathbf 1_{U^c} (\mathbf 1_{U^c} \circ G^t) \sqrt{J_t} &\leq \frac12 \int_{S\Sigma_t} \mathbf 1_{U^c} (\mathbf 1_{U^c} \circ G^t) + \frac12 \int_{S\Sigma_{-t}} (\mathbf 1_{U^c} \circ G^{-t}) \mathbf 1_{U^c} \\
	&\leq \frac12 |U^c \cap G^{-t}(U^c)| + \frac12 |U^c \cap G^{t}(U^c)|,
\end{align*}
and hence
\[
	\frac1T \sum_{t\in\mathcal T\cap[-T,T]} \int_{S\Sigma_t} \mathbf 1_{U^c} (\mathbf 1_{U^c} \circ G^t) \sqrt{J_t} \leq \frac{1}{T} \sum_{t\in\mathcal T\cap[-T,T]} |U^c \cap G^{-t}(U^c)|.
\]
By the claim, $G^t(x,\xi) \in U^c$ for at most $N_\delta$ nonzero times $t$. Hence we have
\begin{align*}
	\frac{1}{T} \sum_{t\in\mathcal T\cap[-T,T]} |U^c \cap G^{-t}(U^c)| &\leq \frac{1}{T} \sum_{t\in\mathcal T\cap[-T,T]} \int_{SN^*H} \mathbf 1_{U^c}(G^t(x,\xi)) \, dx \, d\xi \\
	&= \frac1T \int_{SN^*H} \sum_{t\in\mathcal T\cap[-T,T]} \mathbf 1_{U^c}(G^t(x,\xi)) \, dx \, d\xi \leq \frac{|SN^*H| N_\delta}{T},
\end{align*}
which indeed vanishes as $T \to \infty$ as desired.

\subsection{Proof of Theorem \ref{invariant function}}

	We consider the time-$t$ return operator $U_t$ on $L^2(SN^*H)$ given by
	\[
		U_t f = \mathbf 1_{S\Sigma_t} f \circ G^t \sqrt{J_t}.
	\]
	Note, $\langle U_t \mathbf 1, \mathbf 1\rangle$ is similar to the integral in the definition of $q(t)$ except for the missing the Maslov factor. Lemma \ref{pos definite}, or rather its proof, yields that for each $f \in L^2(SN^*H)$,
	\[
		\|f\|_{L^2(SN^*H)}^2 + \sum_{t \in \mathcal T} e^{-it\lambda} \langle U_t f, f\rangle
	\]
	is a nonnegative Radon measure in $\lambda$. In particular, if $\rho$ is a nonnegative Schwartz-class function on $\R$,
	\[
		A_\rho := \widehat \rho(0) I + \sum_{t \in \mathcal T} \widehat \rho(-t) U_t
	\]
	is a positive-definite operator on $L^2(SN^*H)$. Furthermore, one verifies that $U_{-t} = U_t^*$, and hence since $\widehat \rho$ is even, $A_\rho$ is also self-adjoint. It follows that $\langle A_\rho f,f\rangle^\frac12$ is a seminorm on $L^2(SN^*H)$, and hence we have the triangle inequality
	\[
		\langle A_\rho (f + g),f + g\rangle ^\frac12 \leq \langle A_\rho f, f\rangle ^\frac12 + \langle A_\rho g, g\rangle ^\frac12.
	\]
	
	Next, denote 
	\[
	S\Sigma_{+\infty}:=\limsup_{t\rightarrow+\infty} S\Sigma_t=\bigcap_{T=0}^\infty\bigcup_{t>T}S\Sigma_t,
	\]
	which is the subset of $(x,\xi)\in S\Sigma_+$ that loops back normally to $H$ infinitely often via the geodesic flow. Similarly, define
	\[
	S\Sigma_{-\infty}:=\limsup_{t\rightarrow-\infty} S\Sigma_t=\bigcap_{T=0}^\infty\bigcup_{t<-T}S\Sigma_t,
	\]
	and let
	\[
		S\Sigma_\infty = S\Sigma_{+\infty} \cap S\Sigma_{-\infty}
	\]
	denote the set of directions which loop back for infinitely many positive and infinitely many negative times. We partition $SN^*H$ into sets $S\Sigma_\infty$, $B_1$, and $B_2$ with $B_1 = S\Sigma_{-\infty} \setminus S\Sigma_{+\infty}$ and $B_2 = SN^*H \setminus ( S\Sigma_{\infty}\cup B_1)$. That is, $B_1$ are directions that loop back for infinitely many negative times but only for finitely many positive times. $B_2$ are all directions that only loop back for finitely many negative times. Now we assert that $\widehat \rho \geq 0$ and $\widehat \rho > 0$ on $[-1,1]$ and $\supp \widehat \rho \subset [-2,2]$ and take the (unusual) scaling
	\[
		\rho_T(\tau) = \rho(T\tau).
	\]
	By the triangle inequality, we have
	\begin{multline*}
		\frac1T \left( |SN^*H| + \sum_{t \in \mathcal T} \widehat \rho(-t/T) |q(t)| \right) \leq \langle A_{\rho_T}(\mathbf 1\rangle , \mathbf 1\rangle ^\frac12 \\
		\leq \langle A_{\rho_T}(\mathbf 1_{S\Sigma_\infty}) , \mathbf 1_{S\Sigma_\infty}\rangle ^\frac12 + \langle A_{\rho_T}(\mathbf 1_{B_1}), \mathbf 1_{B_1}\rangle ^\frac12 + \langle A_{\rho_T}(\mathbf 1_{B_2}), \mathbf 1_{B_2}\rangle ^\frac12.
	\end{multline*}
	We claim that
	\begin{equation}\label{invariant claim 1}
		\lim_{T \to \infty} \langle A_{\rho_T}(\mathbf 1_{B_i}), \mathbf 1_{B_i}\rangle  = 0 \qquad i = 1,2
	\end{equation}
	essentially by construction. We then also claim that
	\begin{equation}\label{invariant claim 2}
		\lim_{T \to \infty} \langle A_{\rho_T}(\mathbf 1_{S\Sigma_\infty}), \mathbf 1_{S\Sigma_\infty}\rangle  = 0
	\end{equation}
	under the assumptions in the theorem. \eqref{averaging condition} follows.
	
	We now proceed with the proof of \eqref{invariant claim 1}. Now, since $U_{-t} = U^*_t$ and $\widehat \rho$ is even, we have
	\begin{align*}
		\langle A_{\rho_t} \mathbf 1_{B_i}, \mathbf 1_{B_i}\rangle  &= \frac{1}{T} \left(|B_i| + \sum_{t \in \mathcal T} \widehat \rho(-t/T) \langle U_t \mathbf 1_{B_i}, \mathbf 1_{B_i}\rangle  \right) \\
		&= \frac{1}{T} \left(|B_i| + \sum_{t > 0} \widehat \rho(-t/T) \langle U_t \mathbf 1_{B_i}, \mathbf 1_{B_i}\rangle  + \sum_{t < 0} \widehat \rho(-t/T) \langle\mathbf 1_{B_i}, U_{-t} \mathbf 1_{B_i}\rangle   \right) \\
		&= \frac{1}{T} \left(|B_i| + 2 \sum_{t > 0} \widehat \rho(-t/T) \langle U_t \mathbf 1_{B_i}, \mathbf 1_{B_i}\rangle  \right) \\
		&\leq \frac{1}{T} \left(|B_i| + 2 \sum_{0 < t < 2T} \langle U_t \mathbf 1_{B_i}, \mathbf 1_{B_i}\rangle  \right).
	\end{align*}
	By Lemma \ref{cauchy-schwarz}, we have
	\[
		\langle U_t \mathbf 1_{B_i}, \mathbf 1_{B_i}\rangle  = \int_{S\Sigma_t} \mathbf 1_{B_i} (\mathbf 1_{B_i} \circ G^t) \sqrt{J_t} \leq |B_i \cap S\Sigma_t|^\frac12 |B_i \cap S\Sigma_{-t}|^\frac12.
	\]
	By Cauchy-Schwarz, we have
	\[
		\frac1T \sum_{0 < t < 2T} \langle U_t \mathbf 1_{B_i}, \mathbf 1_{B_i}\rangle  \leq \left( \frac1T \sum_{0 < t < 2T} |B_i \cap S\Sigma_t| \right)^\frac12 \left( \frac1T \sum_{0 < t < 2T} |B_i \cap S\Sigma_{-t}| \right)^\frac12.
	\]
	In the case $i = 1$, the latter quantity in parentheses is bounded in $T$, and the former can be written as
	\[
		\int_{B_1} \frac{\#\{ t \in \mathcal T \cap (0,2T] : (x,\xi) \in S\Sigma_t \}}{T} \, dx \, d\xi,
	\]
	which vanishes in the limit $T \to \infty$ by Lebesgue dominated convergence since the integrand is bounded in $T$ and vanishes in the limit by construction. The case $i = 2$ is similar, and we have \eqref{invariant claim 1}
	
	We are ready to prove \eqref{invariant claim 2}. Now for each $(x,\xi) \in SN^*H$, let $\mathbf T(x,\xi)$ be the first positive time $t$ for which $(x,\xi) \in S\Sigma_t$, with $\mathbf T(x,\xi) = \infty$ if no such time exists. Define an operator $U : L^2(SN^*H) \to L^2(SN^*H)$ by
	\[
	U(f)(x,\xi) =
	\begin{cases} f \circ G^\mathbf T \sqrt{J_\mathbf T}, &\text{if }(x,\xi)\in SN^*H\cap S\Sigma_+,\\
		0,  &\text{if }(x,\xi)\in SN^*H\setminus S\Sigma_+.
	\end{cases}
	\]
	Assume after perhaps rescaling the metric on $M$ that $\mathbf T(x,\xi) \geq 1$ for each $(x,\xi) \in SN^*H$. Then,
	similar to the reduction from before,
	\begin{align*}
		\langle A_{\rho_T} \mathbf 1_{S\Sigma_\infty}, \mathbf 1_{S\Sigma_\infty}\rangle  &\leq \frac1T \left( |S\Sigma_\infty| + 2\sum_{0 < t < 2T} \langle U_t \mathbf 1_{S\Sigma_\infty}, \mathbf 1_{S\Sigma_\infty}\rangle  \right) \\
		&\leq \frac2T \left( \sum_{k = 0}^{\lfloor 2T \rfloor} \langle U^k \mathbf 1_{S\Sigma_\infty}, \mathbf 1_{S\Sigma_\infty}\rangle  \right).
	\end{align*}

	To show this vanishes in the limit $T \to \infty$, we use von Neumann's mean ergodic theorem. To do so, we first verify:
	\begin{lemma}
			The restriction of $U$ to $L^2(S\Sigma_\infty)$ is a Hilbert space isometry.
		\end{lemma}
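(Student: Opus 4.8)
The plan is to reduce the claim to two structural facts: that the first‑return map $G^{\mathbf T}$ restricts to a bijection of $S\Sigma_\infty$ onto itself, and that $\sqrt{J_{\mathbf T}}$ is exactly the Jacobian weight making the associated weighted composition operator norm‑preserving. Throughout I would use the elementary composition law for the sets $S\Sigma_t$ --- that $(x,\xi)\in S\Sigma_t$ and $G^t(x,\xi)\in S\Sigma_s$ force $(x,\xi)\in S\Sigma_{t+s}$, which is immediate from the chain rule for the flow exactly as in the first line of the proof of Lemma \ref{group-ish} --- together with its special case $G^t(S\Sigma_t)=S\Sigma_{-t}$.

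\emph{Step 1 (bijectivity of $G^{\mathbf T}$ on $S\Sigma_\infty$).} Suppose $(x,\xi)$ returns normally to $H$, with the nondegeneracy \eqref{structured looping direction}, at the nonzero times $\dots<s_2<s_1<0<t_1<t_2<\dots$ (infinitely many of each sign, by definition of $S\Sigma_\infty$), and let $\mathbf T(x,\xi)=t_1$. By the composition law, $G^{t_1}(x,\xi)$ returns normally at every $t_j-t_1>0$, of which there are infinitely many, and at $-t_1$ together with every $s_j-t_1<0$, again infinitely many; hence $G^{\mathbf T}$ maps $S\Sigma_\infty$ into $S\Sigma_\infty$. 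Running the same shift in reverse shows that whenever $G^{\mathbf T(x,\xi)}(x,\xi)\in S\Sigma_\infty$ one already has $(x,\xi)\in S\Sigma_\infty$; this simultaneously tells us that $U(\mathbf 1_{S\Sigma_\infty}f)$ is supported in $S\Sigma_\infty$ (so $U$ maps $L^2(S\Sigma_\infty)$ into itself), and yields surjectivity: given $v\in S\Sigma_\infty$, let $u>0$ be its least backward normal‑return time, which exists by time‑reversal symmetry just as for $\mathbf T$ (the set of such times is closed, since the nondegeneracy \eqref{structured looping direction} passes to limits of return times by continuity of $dG$ and of the tangent spaces of $\dot N^*H$); then $w:=G^{-u}(v)$ satisfies $\mathbf T(w)=u$ by minimality and $G^{\mathbf T(w)}(w)=v$. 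Injectivity is forced by the minimality in the definition of $\mathbf T$ together with the fact that each $G^t$ is a global diffeomorphism of $\dot T^*M$: a coincidence $G^{\mathbf T(w)}(w)=G^{\mathbf T(w')}(w')$ with $\mathbf T(w)<\mathbf T(w')$ would make $\mathbf T(w')-\mathbf T(w)\in(0,\mathbf T(w'))$ a normal‑return time of $w'$, a contradiction.

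\emph{Step 2 (change of variables).} This is where \eqref{structured looping direction} does its real work. Fix $t\in\mathcal T\cap(0,\infty)$ and $w_0\in S\Sigma_t$; choose coordinates on $\dot T^*M$ near $G^t(w_0)$ of tubular‑neighborhood type for $\dot N^*H$, so points are written $(v,z)$ with $\{z=0\}$ cutting out $\dot N^*H$, and write $G^t(w)=(V(w),Z(w))$ for $w\in\dot N^*H$ near $w_0$. Since $Z(w_0)=0$, the identity $dG^t\bigl(T_{w_0}\dot N^*H\bigr)=T_{G^t(w_0)}\dot N^*H$ from \eqref{structured looping direction} forces $dZ(w_0)=0$ and $dV(w_0)$ invertible; hence $V$ is a diffeomorphism near $w_0$, it agrees with $G^t$ at every point of $S\Sigma_t$ (where $Z$ vanishes), and $\lvert\det dV\rvert=J_t$ there by the defining relation \eqref{def J}. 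Covering $S\Sigma_t$ by countably many such charts and using that $G^t$ is globally injective, the ordinary change‑of‑variables formula gives
\[
\int_{G^t(E)} h\,d\mu \;=\; \int_{E}(h\circ G^t)\,J_t\,d\mu
\]
for every measurable $E\subseteq S\Sigma_t$ and nonnegative measurable $h$, where $\mu$ is the natural measure on $SN^*H$.

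\emph{Step 3 (conclusion).} Decompose $S\Sigma_\infty=\bigsqcup_{t\in\mathcal T\cap(0,\infty)}\{w\in S\Sigma_\infty:\mathbf T(w)=t\}$ and sum the identity of Step 2 over these pieces; by Step 1 the images $G^t(\{\mathbf T=t\})$ are disjoint and exhaust $S\Sigma_\infty$, so
\[
\int_{S\Sigma_\infty}(h\circ G^{\mathbf T})\,J_{\mathbf T}\,d\mu \;=\; \int_{S\Sigma_\infty} h\,d\mu
\]
for every nonnegative measurable $h$ on $S\Sigma_\infty$. Taking $h=\lvert f\rvert^2$ for $f\in L^2(S\Sigma_\infty)$ gives $\lVert Uf\rVert_{L^2(S\Sigma_\infty)}^2=\lVert f\rVert_{L^2(S\Sigma_\infty)}^2$, which together with the support statement from Step 1 is precisely the assertion that $U$ restricts to an isometry of $L^2(S\Sigma_\infty)$. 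The step I expect to demand the most care is Step 2: since cleanness of the relevant composition is not assumed, $S\Sigma_t$ need not be a submanifold, so the change of variables must be justified on a possibly irregular set purely from the pointwise nondegeneracy \eqref{structured looping direction} --- the crux being that \eqref{structured looping direction} makes $G^t$ coincide on $S\Sigma_t$ with a genuine local diffeomorphism of $\dot N^*H$ whose Jacobian there is $J_t$.
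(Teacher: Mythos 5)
Your proof is correct, and it takes a genuinely different route from the paper's. Both arguments hinge on the same change-of-variables identity for $G^t$ on $S\Sigma_t$ (with Jacobian $J_t$), and both decompose $S\Sigma_\infty$ into the level sets $\{\mathbf T = T_j\}$. The difference is in how the resulting integral over the image is recognized as $\int_{S\Sigma_\infty}|f|^2\,d\mu$. You prove directly that the first-return map $G^{\mathbf T}$ is a bijection of $S\Sigma_\infty$ onto itself --- establishing in particular that every $v\in S\Sigma_\infty$ has a least backward normal-return time $u>0$, with $G^{-u}(v)$ a preimage --- and then observe that the images $G^{T_j}(\{\mathbf T = T_j\}\cap S\Sigma_\infty)$ form a partition of $S\Sigma_\infty$. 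The paper instead bypasses surjectivity entirely by applying a second change of variables via the time-reversal involution $\iota\colon(x,\xi)\mapsto(x,-\xi)$: after the $G^{T_j}$-substitution one lands on $S\Sigma_{-T_j}$, and $\iota$ sends this back to $S\Sigma_{T_j}$ measure-preservingly, converting the constraint $\mathbf T\circ G^{-T_j}=T_j$ into $\mathbf T\circ\iota = T_j$; summing over $j$ then simply reconstitutes $\int_{S\Sigma_\infty}|f|^2(x,-\xi)\,d\mu$, and $\iota$-invariance of $S\Sigma_\infty$ and $\mu$ finishes. The paper's route is shorter and sidesteps the closedness/minimum argument you need for the existence of the backward first-return time (a point your proof handles correctly but which deserves the care you flag: the nondegeneracy \eqref{structured looping direction}, being an equality of subspaces in the Grassmannian, is indeed a closed condition, and the uniform positive lower bound on return times makes the infimum a minimum). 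Your route has the compensating advantage of exhibiting the first-return map on $S\Sigma_\infty$ as an honest measure-preserving bijection, which is a cleaner conceptual picture of \emph{why} $U$ is an isometry there. One small presentational gap: in your Step 2 the tubular chart and the map $V$ are built on $\dot N^*H$, whereas the measure $\mu$ in the change-of-variables identity is the Leray measure on $SN^*H$; since $G^t$ preserves $p$ this discrepancy is harmless (the Jacobians agree), but it is worth a sentence --- the paper glosses over the same point with ``the standard change of variables.''
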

		
		\begin{proof}
			List the distinct first-return times of points on $S\Sigma_\infty$ as $T_1,T_2,\ldots$ and write $Uf$ as the countable sum $\sum_j U_jf$ where $U_jf(x,\xi)= f \circ G^T \sqrt{J_T} \mathbf 1_{\mathbf T = T_j}$. We claim that
			\[
			\|Uf\|^2_{L^2(S\Sigma_\infty)}=\sum_j\|U_j f\|^2_{L^2(S\Sigma_\infty)} = \|f\|^2_{L^2(S\Sigma_\infty)}
			\]
			for each $j = 1,2,\ldots$. The first equality follows from the fact that $Uf_1,Uf_2,\ldots$ are mutually orthogonal by definition. To see the second equality, we write $\sum_j\|U_j f\|^2_{L^2(S\Sigma_\infty)}$ as
			\[
			\sum_j\int_{S\Sigma_{T_j}} |f \circ G^{T_j}|^2  \mathbf 1_{\mathbf T = T_j} J_{T_j} = \sum_j\int_{S\Sigma_{-T_j}} |f|^2  \mathbf 1_{\mathbf T = T_j}\circ G^{-T_j},
			\]
			where we have used the standard change of variables. But, $(x,\xi) \in S\Sigma_{-T_j}$ if and only if $(x,-\xi) \in S\Sigma_{T_j}$, and furthermore $(x,\xi) \mapsto (x,-\xi)$ is a measure-preserving map $S\Sigma_{-T_j} \to S\Sigma_{T_j}$. Finally, since $\mathbf T(x,\xi)=T_j$ if and only if $\mathbf T(G^{-T_j}(x,-\xi))=T_j$, the right hand side equals
			\begin{multline}
				\sum_j\int_{S\Sigma_{-T_j}} |f|^2(x,\xi)  \mathbf 1_{\mathbf T = T_j} (x,-\xi)=\sum_j\int_{S\Sigma_{T_j}} |f|^2(x,-\xi)  \mathbf 1_{\mathbf T = T_j} (x,\xi)\\=\int_{S\Sigma_{\infty}} |f|^2(x,-\xi)=\int_{S\Sigma_{\infty}} |f|^2(x,\xi).
			\end{multline}
			The last equality is due to the fact that $(x,\xi) \in S\Sigma_{\infty}$ if and only if $(x,-\xi) \in S\Sigma_{\infty}$, and furthermore $(x,\xi) \mapsto (x,-\xi)$ is a measure-preserving map on $S\Sigma_{\infty}$. Our claim is proved.
	\end{proof}
	
	Now, by von Neumann's mean ergodic theorem, 	
	\[
		\lim_{T \to \infty} \frac2T \left( \sum_{k = 0}^{\lfloor 2T \rfloor} \langle U^k \mathbf 1_{S\Sigma_\infty}, \mathbf 1_{S\Sigma_\infty}\rangle  \right) = 4 \mathit\Pi(\mathbf 1) 
	\]
	where $\mathit\Pi(\mathbf 1)$ denotes the projection of $\mathbf 1$ onto the $U$ invariant subspace of $L^2(S\Sigma_\infty)$. By our assumption that the geodesic flow has no $L^1$ invariant measure on $S\Sigma_\infty$,  $\mathit\Pi(\mathbf 1)=0$. The proof is complete.

\section*{Declarations}

\subsection*{Funding}

Xi was supported by the National Key R\&D Program of China, No: 2022YFA1007200, NSF China Grant No. 12171424, and the Fundamental Research Funds for the Central Universities 2021QNA3001.

Wyman was partially supported by NSF Grant DMS-2204397 the AMS-Simons Travel Grants.

\subsection*{Financial and non-financial interests}

The authors have no financial or proprietary interests in any material discussed in this article.

\bibliography{references}{}
\bibliographystyle{alpha}

\end{document}